\theoremstyle{plain}
\newtheorem{theorem}{Theorem}[section]
\newtheorem{corollary}[theorem]{Corollary}
\newtheorem{lemma}[theorem]{Lemma}
\newtheorem{proposition}[theorem]{Proposition}
\numberwithin{equation}{section}
\theoremstyle{definition}
\theoremstyle{remark}
\newtheorem{remark}[theorem]{Remark}
\newtheorem{assumption}[theorem]{Assumption}
\setlist[itemize]{leftmargin=.5in}
\setlist[enumerate]{leftmargin=.5in,topsep=3pt,itemsep=3pt,label=(\roman*)}
\newcommand*\samethanks[1][\value{footnote}]{\footnotemark[#1]}
\newcommand{\email}[1]{\href{#1}{#1}}
\newcommand{\TheTitle}{Robust Estimation of Effective Diffusions \\ from Multiscale Data} 
\newcommand{\TheAuthors}{G. Garegnani, A. Zanoni}
\title{\TheTitle}
\author{Giacomo Garegnani \thanks{ANMC, Institute of Mathematics, École Polytechnique Fédérale de Lausanne, 1015 Lausanne, Switzerland, \email{giacomo.garegnani@epfl.ch}, \email{andrea.zanoni@epfl.ch}. The authors are partially supported by the Swiss National Science Foundation, under grant No. 200020\_172710.}
		\and Andrea Zanoni \samethanks
}
\date{}
\DeclareMathOperator*{\argmin}{arg\,min}
\newcommand{\abs}[1]{\left\lvert#1\right\rvert}
\newcommand{\norm}[1]{\left\|#1\right\|}
\renewcommand{\phi}{\varphi}
\renewcommand{\theta}{\vartheta}
\newcommand{\R}{\mathbb{R}}
\newcommand{\epl}{\varepsilon}
\newcommand{\defeq}{\coloneqq}
\newcommand{\E}{\operatorname{\mathbb{E}}}
\renewcommand{\d}{\mathrm{d}}
\newcommand{\dd}{\,\mathrm{d}}
\definecolor{shade}{RGB}{100, 100, 100}
\definecolor{bordeaux}{RGB}{128, 0, 50}
\definecolor{leg1}{RGB}{0,114,189}
\definecolor{leg2}{RGB}{217,83,25}
\definecolor{leg3}{RGB}{237,177,32}
\definecolor{leg4}{RGB}{126,47,142}
\definecolor{leg5}{RGB}{119,172,48}
\definecolor{leg21}{RGB}{62,38,169}
\definecolor{leg22}{RGB}{46,135,247}
\definecolor{leg23}{RGB}{55,200,151}
\definecolor{leg24}{RGB}{254,195,56}
\begin{document}
	
\maketitle	

\begin{abstract} We present a novel methodology based on filtered data and moving averages for estimating effective dynamics from observations of multiscale systems. We show in a semi-parametric framework of the Langevin type that our approach is asymptotically unbiased with respect to the theory of homogenization. Moreover, we demonstrate on a range of challenging numerical experiments that our method is accurate in extracting coarse-grained dynamics from multiscale data. In particular, the estimators we propose are more robust and require less knowledge of the full model than the standard technique of subsampling, which is widely employed in practice in this setting.
\end{abstract}
 
\textbf{AMS subject classifications.} 60J60, 62F12, 62M05, 62M20, 65C30.

\textbf{Keywords.} Parameter inference, diffusion processes, data-driven homogenization, filtering, Langevin equation.

\section{Introduction}

Inferring simple effective models from observations of complex phenomena characterized by multiple scales is a problem of interest in several fields. Examples range from chemical models of molecular dynamics \cite{LeS16b,LeL12,PaS07,PPS09}, where reactions may occur at widely separated time scales, to the modelling of financial markets characterized by market-microstructure noise \cite{AiJ14,AMZ05,ZMA05,OSP10}. Extracting from data a simple surrogate of multiscale models is of the utmost relevance also in oceanography, meteorology, and marine biology \cite{CoP09,KPP15}.

In this paper, we are interested in inferring coarse-grained equations from observations of diffusion processes evolving on multiple time scales. Given a positive integer $d$, a drift function $b^\epl \colon \R^d \times \R^d \to \R^d$ periodic with respect to its second argument, a multiscale parameter $\epl > 0$, and a diffusion coefficient $\sigma > 0$, we consider the $d$-dimensional multiscale SDE
\begin{equation}\label{eq:SDE_MS}
\d X^\epl(t) = b^\epl\left(X^\epl(t), \frac{X^\epl(t)}\epl\right) \dd t + \sqrt{2\sigma} \dd W(t),
\end{equation}
where $W \defeq (W(t), t \geq 0)$ is a standard $d$-dimensional Brownian motion, and where $X^\epl(0)$ is a given initial condition. Assuming that continuous-time data $X^\epl \defeq (X^\epl(t), 0 \leq t \leq T)$ are provided, with $T$ a finite time horizon, our goal is then inferring a coarse-grained equation, independent of the fastest scale $\mathcal O(\epl^{-1})$, which reads
\begin{equation}\label{eq:SDE_HOM}
\d X^0(t) = b^0\left(X^0(t)\right) \dd t + \sqrt{2\Sigma} \dd W(t),
\end{equation}
where $b^0 \colon \R^d \to \R^d$ and $\Sigma \in \R^{d\times d}$ are the effective drift function and diffusion matrix, respectively. Knowledge of the full model \eqref{eq:SDE_MS} yields, in specific instances, a single-scale model \eqref{eq:SDE_HOM} which is effective in the sense of the theory of homogenization. In particular, one can prove in these cases that $X^\epl \to X^0$ for $\epl \to 0$ in a weak sense (see \cite[Chapter 18]{PaS08} or \cite[Chapter 3]{BLP78}). In this work, we consider $b^\epl$ and $\sigma$ to be unknown and wish to infer the parameters $b^0$ and $\Sigma$ of \eqref{eq:SDE_HOM} from multiscale data. Hence, the problem we consider here could be framed into the setting of data-driven homogenization. 

\subsection{Setup: Multiscale Overdamped Langevin Dynamics}\label{sec:Setting}

The class of multiscale SDEs which can be written as \eqref{eq:SDE_MS} is vast, and can be employed for modeling a wide range of physical and social phenomena. In this work, we narrow the scope by considering a semi-parametric framework and a gradient structure, inspired by simple models of molecular dynamics. Let $N$ be a positive integer, consider smooth functions $\{V_i \colon \R^d \to \R\}_{i=1}^N$, and a periodic function $p \colon \R^d \to \R$ with period $L_i$ in the $i$-th direction in $\R^d$ for $i = 1, \ldots, d$. We then let the drift function in the multiscale dynamics \eqref{eq:SDE_MS} be given by
\begin{equation}
b^\epl\left(x, y\right) = -\sum_{i=1}^N \alpha_i \nabla V_i(x) - \frac1\epl \nabla p(y),
\end{equation}
where $\{\alpha_i\}_{i=1}^N$ are scalar drift coefficients. With this choice, equation \eqref{eq:SDE_MS} reads
\begin{equation}\label{eq:SDE_MS_Lang}
\d X^\epl(t) = -\sum_{i=1}^N \alpha_i \nabla V_i(X^\epl(t)) \dd t - \frac1\epl \nabla p\left(\frac{X^\epl(t)}\epl\right) \dd t + \sqrt{2\sigma} \dd W(t),
\end{equation}
and the stochastic model we consider is of the overdamped Langevin type. There exists for equation \eqref{eq:SDE_MS_Lang} a model of the form \eqref{eq:SDE_HOM} which is effective in the homogenization limit $\epl \to 0$. Let $X^\epl \defeq (X^\epl(t), 0 \leq t \leq T)$ denote the solution of \eqref{eq:SDE_MS_Lang} for a finite time horizon $T$. Then, it holds $X^\epl \to X^0$ in law in $C^0([0,T]; \R^d)$ for $\epl \to 0$, where $X^0 \defeq (X^0(t), 0 \leq t \leq T)$ is the solution of the overdamped Langevin equation
\begin{equation}\label{eq:SDE_HOM_Lang}
\d X^0(t)  = -\sum_{i=1}^N A_i \nabla V_i(X^0(t)) \dd t + \sqrt{2\Sigma} \dd W(t).
\end{equation}
Here, the matrices $A_i \defeq \alpha_i \mathcal K$ and $\Sigma \defeq \sigma \mathcal K$, where $\mathcal K \in \R^{d\times d}$ is the symmetric positive semidefinite matrix defined by
\begin{equation}\label{eq:HomogenizationCoeff}
\mathcal K = \int_{\mathbb{L}} (I + D\Phi(y))(I + D\Phi(y))^\top \dd \nu(y), \quad  \mathbb L \defeq \bigotimes_{i=1}^d [0, L_i],
\end{equation}
where $D\Phi$ is the Jacobian of the solution $\Phi \colon \R^d \to \R^d$ of the vector-valued PDE, or cell problem
\begin{equation}\label{eq:Cell}
\begin{aligned}
&\mathcal L_0 \Phi = \nabla p, &&\text{in } \mathbb L, \quad + \text{ periodic b.c. on } \partial \mathbb L, \\
&\int_{\mathbb L} \Phi(y) \dd \nu(y) = 0,
\end{aligned}
\end{equation}
and where the differential operator $\mathcal L_0$, applied component-wise to $\Phi$, is defined as
\begin{equation}
\mathcal L_0 = - \nabla p \cdot \nabla + \sigma \Delta.
\end{equation}
The measure $\nu$ introduced in \eqref{eq:HomogenizationCoeff} is the probability measure on $\mathbb L$ given by
\begin{equation}
\nu(\d y) = \frac1{C_\nu}\exp\left(-\frac{p(y)}{\sigma}\right) \dd y, \qquad C_\nu = \int_{\mathbb L} \exp\left(-\frac{p(y)}{\sigma}\right) \dd y,
\end{equation}
which accounts for the fluctuations at infinity of the fast-scales of the solution of \eqref{eq:SDE_MS_Lang}. We refer the reader to \cite[Chapters 11 and 18]{PaS08} for a complete derivation and proof of this homogenization result.

\subsection{Failure of Standard Estimators}

We consider from now on for clarity the case $d = 1$, for which equation \eqref{eq:SDE_MS_Lang} reads
\begin{equation}\label{eq:SDE_MS_Lang_d1}
\d X^\epl(t) = -\alpha \cdot V'(X^\epl(t)) \dd t - \frac1\epl p'\left(\frac{X^\epl(t)}\epl\right) \dd t + \sqrt{2\sigma} \dd W(t),
\end{equation}
where $V \colon \R\to\R^N$ is defined as $V(x) = (V_1(x), V_2(x), \ldots, V_N(x))^\top$, the derivative $V'$ is computed component-wise, and $\alpha = (\alpha_1, \alpha_2, \ldots, \alpha_N)^\top$. Let us assume that we are exposed to a continuous-time stream of data $X^\epl$ solution of \eqref{eq:SDE_MS_Lang} for a finite time horizon $T$. Moreover, let us assume that the periodic function $p$, as well as the scale-separation parameter $\epl$, the drift coefficients $\{\alpha_i\}_{i=1}^N$, and the diffusion coefficient $\sigma$ are unknown. Conversely, we assume that the functions $\{V_i\}_{i=1}^N$ are known. In the semi-parametric framework, indeed, these functions often coincide with the first $N$ elements of the basis of some appropriate function space. Our goal is then to infer the effective drift coefficient $A = (A_1, A_2, \ldots, A_N)^\top \in \R^N$ and diffusion coefficient $\Sigma > 0$ that define the single-scale dynamics
\begin{equation}\label{eq:SDE_Hom_Lang_d1}
\d X^0(t) = -A \cdot V'(X^0(t)) \dd t + \sqrt{2\Sigma} \dd W(t).
\end{equation}
We consider the inferred coefficients to be \textit{asymptotically unbiased} if they converge to the true effective coefficients in the homogenization limit $\epl \to 0$ and for infinite data, i.e., for $T \to \infty$. Since the coarse-grained dynamics are inferred from data instead of being computed using the homogenization formulas, we are in the setting of \textit{data-driven homogenization}.

Let us first consider the drift coefficient. Applying Girsanov change of measure formula to the effective dynamics (see e.g. \cite{LiS01,LiS01b,Pav14,PSV09}) yields the likelihood function
\begin{equation}
L_T\left(X \mid A\right) = \exp\left(-\frac{I_T\left(X \mid A\right)}{2\Sigma}\right),
\end{equation}
where $X = (X(t), 0 \leq t \leq T)$ is a continuous-time stream of data and where
\begin{equation}
I_T\left(X \mid A\right) = \int_0^T A \cdot V'(X(t)) \dd X(t) + \frac12 \int_0^T (A \cdot V'(X(t)))^2 \dd t.
\end{equation}
Minimizing the function $I_T$ with respect to $A$ yields the maximum likelihood estimator (MLE) $\widehat A(X, T)$, which is defined as the solution of the linear system
\begin{equation} \label{eq:MLE}
-M(X, T) \widehat A_{\mathrm{MLE}}(X, T) = v(X, T),
\end{equation}
where
\begin{equation}
M(X, T) \defeq \frac1T \int_0^T V'(X(t)) \otimes V'(X(t)) \dd t, \quad v(X, T) \defeq \frac1T \int_0^T V'(X(t)) \dd X(t).
\end{equation}
The standard estimator of the diffusion coefficient $\Sigma$ given the stream of data $X$ is obtained by computing the quadratic variation $\langle X \rangle_T$ of the path $X$ and by defining
\begin{equation}
\widehat \Sigma(X, T) = \frac{\langle X \rangle_T}{2T} .
\end{equation}
We remark that in case the data $X$ would originate from the model \eqref{eq:SDE_Hom_Lang_d1}, we would have by definition $\widehat \Sigma(X, T) = \Sigma$. 

Let us now consider the stream of data $X = X^\epl$, i.e., the framework of data-driven homogenization. In this case, both the estimators $\widehat A(X^\epl, T)$ and $\widehat \Sigma(X^\epl, T)$ for the effective drift and diffusion coefficients are not asymptotically unbiased. In particular, it holds by \cite[Theorem 3.4]{PaS07}
\begin{equation} \label{eq:failure_classic}
\begin{aligned}
&\lim_{\epl \to 0}\lim_{T\to\infty} \widehat A_{\mathrm{MLE}}(X^\epl, T) = \alpha, \quad \text{a.s.}, \\
&\widehat \Sigma(X^\epl, T) = \sigma,
\end{aligned}
\end{equation}
where $\alpha$ and $\sigma$ are the coefficients of the multiscale dynamics \eqref{eq:SDE_MS_Lang_d1}. In this setting, the standard estimators fail and it is necessary to employ homogenization-informed techniques to infer the effective equation. 

\begin{remark} In case $\epl$ is known and due to \eqref{eq:failure_classic}, it would be possible to infer directly the full multiscale model \eqref{eq:SDE_Hom_Lang_d1} employing a periodic parametrisation of the function $p$. We argue that this would be less useful, at least for predictive purposes, than estimating directly the effective model. Indeed, numerical integration of \eqref{eq:SDE_MS_Lang_d1} is possible only by choosing critically small time step for most numerical schemes, which in turn yields dramatically high computational cost.
\end{remark}

\subsection{Literature Review}

The literature on statistical inference of stochastic models modelled by SDEs is vast. Introductory references on the topic are \cite{Bis08,Kut04,BaP80,Pra99}. The articles \cite{PSV09,PSZ13} give a complete overview on recent frequentist and Bayesian perspectives on the topic. A series of methods have been proposed in recent years for multiscale models, in different settings and with different purposes. We refer the reader to \cite{PPS12} for a recent survey, and summarize the approaches which are related to the one presented in this article in the following.

The focus of the early papers \cite{PaS07,PPS09} are simple multiscale models of molecular dynamics. In these works, the authors show that multiscale continuous-time data should be subsampled in order to infer single-scale equations which are effective in the sense of the theory of homogenization. In particular, let $\delta > 0$ be the subsampling rate, chosen for simplicity such that $T = n\delta$ for a positive integer $n$. The subsampled drift estimator is computed with a Euler--Maruyama-type discretisation of the MLE with spacing $\delta$, i.e.,
\begin{equation} \label{eq:DriftEstimatorSub}
-M^\delta_{\mathrm{sub}}(X^\epl, T) \widehat A^\delta_{\mathrm{sub}}(X^\epl, T) = v^\delta_{\mathrm{sub}}(X^\epl, T),
\end{equation}
where
\begin{equation}
\begin{aligned}
&M^\delta_{\mathrm{sub}}(X^\epl, T) \defeq \frac{\delta}T \sum_{i=0}^{n-1} V'(X^\epl(i\delta)) \otimes V'(X^\epl(i\delta)), \\
&v^\delta_{\mathrm{sub}}(X^\epl, T) \defeq \frac1T \sum_{i=0}^{n-1} V'(X^\epl(i\delta)) \left(X^\epl((i+1)\delta) - X^\epl(i\delta)\right).
\end{aligned}
\end{equation}
For the diffusion coefficient, the same subsampling and discretization procedure yields the estimator
\begin{equation}\label{eq:DriftSubsampling}
\widehat \Sigma_{\mathrm{sub}}^\delta = \frac{1}{2T} \sum_{i=0}^{n-1} \left(X^\epl((i+1)\delta) - X^\epl(i\delta)\right)^2.
\end{equation}
The subsampling rate which guarantees asymptotically unbiased estimators lays between the time scales of the multiscale and the effective models, i.e., for $\delta = \epl^\zeta$, with $\zeta \in (0, 1)$ \cite[Theorems 3.5 and 3.6]{PaS07}. The optimal subsampling rate is conjectured in \cite{PaS07} to be $\delta = \epl^{2/3}$. The necessity of subsampling the data have been further observed in \cite{ABT10,ABT11,ABJ13} for unobservable multiscale processes. In econometrics, the works \cite{AiJ14,AMZ05,ZMA05,OSP10} show that subsampling the data is indispensable in order to infer integrated volatility in the presence of market-microstructure noise. The disadvantages of subsampling are mainly two. First, it has been demonstrated numerically \cite{PaS07,AGP21} that inference results based on subsampling highly depend on $\delta$ for $\epl > 0$ and finite $T$. Second, knowledge of the scale-separation parameter $\epl$ is necessary to build asymptotically unbiased estimators, which in practice could be a severe limitation.

The methodologies proposed in \cite{KPK13,KKP15} and further applied in \cite{KPP15} for model selection with paleoclimatic and biological data bypass subsampling by exploiting a martingale property of the likelihood function, and by computing appropriate estimators for conditional expectations. The estimators proposed in \cite{KPK13} are not well posed on a single trajectory, which is overcome by averaging over a set of short trajectories. For \cite{KKP15}, estimators are obtained through a computationally expensive procedure employed to approximate conditional  expectations via Nadarya--Watson techniques. Let us remark that, to our knowledge, unbiasedness for these estimators of the effective dynamics is not theoretically justified and is just conjectured in \cite{KPK13}. In \cite{KKP15}, theoretical analysis is restricted to the one-dimensional case and when the effective dynamics is of the Ornstein--Uhlenbeck type.

Another alternative to subsampling has been proposed in \cite{AGP21}. In this work, maximum likelihood and Bayesian estimators for the effective dynamics are modified by smoothening the continuous-time data with a low-pass exponential filter. Under the assumptions of clear separation between the fast and the slow time scales, and of periodicity for the fast-scale drift function, asymptotic unbiasedness of the estimators is shown. An enhanced robustness with respect to subsampling is demonstrated via numerical experiments. The setting presented in our current paper closely relates to the work \cite{AGP21}, which we simplify here without any loss of accuracy.

In their series of works \cite{GaS17,GaS18,SpC13}, the authors propose estimators for the parameters of a multiscale SDE. The setting is similar to the one studied in this paper, with the difference that the stochastic dynamics are driven by a small noise, which vanishes in the homogenization limit. Hence, the effective equation is in this case a deterministic dynamical system. Theoretical difficulties are due in this framework to the likelihood function induced by the effective dynamics, which is singular. Closely related work is \cite{PaS14}, where estimation of multiscale stochastic dynamics is obtained by dimensionality reduction of an appropriate posterior distribution.

In the case of discrete-time data, it is possible to employ information based on the eigenpairs of the generator of the dynamics to obtain asymptotically unbiased estimators. In \cite{KeS99} and in the series of works \cite{CrV06,CrV06b}, the authors develop the theory of this class of spectral estimators for single-scale SDEs, which is further applied to the multiscale case in \cite{CrV11}. Recently, the technique based on filtered data of \cite{AGP21} has been combined with eigenfuction martingale estimating functions for inferring effective dynamics from discrete-time data from the full model in \cite{APZ22}.

The recent work \cite{CNN22} deals with inference of a similar multiscale equation with Kalman filtering methodologies. In this work, though, the authors focus on retrieving the coefficients of the multiscale dynamics given misspecified data from the reduced model, while we are interested in the opposite direction.

In the related field of multiscale partial differential equations (PDEs), the issue of model misspecification due to the mismatch of multiscale data and homogenized models has been studied in \cite{NPS12} and in the series of works \cite{AbD19,AbD20,AGZ20}.

\subsection{Our Contributions}

In this paper, we build on the techniques introduced in our previous work \cite{AGP21} and design a new class of estimators for effective diffusions based on moving averages. The methodologies we introduce here are easy to implement, computationally cheap, robust, and unbiased with respect to the theory of homogenization. Furthermore, we complete the numerical analysis of \cite{AGP21} by testing our method against a complex instance of multi-dimensional Langevin equations.

The basic idea underlying both our previous work \cite{AGP21} and our manuscript is similar: in both works we propose to smoothen the data through some filtering kernel, and to compute modified estimators which employ -- in some appropriate fashion -- the filtered data. Nevertheless, the theoretical and numerical analysis presented in this paper extend relevantly our previous work and have their own originality. In particular:
\begin{enumerate}[leftmargin=20pt]
	\item Numerical experiments show that the straightforward application of a moving average to the data yields extremely robust inference of effective dynamics when presented with multiscale data. The accuracy of the inference procedure is remarkably higher than previously existing techniques. 
	\item We propose a novel straightforward estimator for the effective diffusion coefficient, which was not previously analysed in our previous work, or elsewhere in the literature.
	\item Implementing our methodology does not require prior knowledge of the scale-separation parameter, and is simple and efficient even for data and parameters in multiple dimensions.
	\item We present an original analysis of asymptotic unbiasedness based on the ergodic properties of an appropriate system of stochastic delay differential equations (SDDEs). To our knowledge, this analysis is a novel theoretical contribution to the literature of statistical inference for SDEs.
\end{enumerate} 

\subsection{Outline}

The remainder of this paper is organized as follows. In \cref{sec:Filter} we present our methodology and introduce the main results of unbiasedness for the estimators based on filtered data. We then present numerical experiments in \cref{sec:NumExp} corroborating our theoretical findings and showing how to apply our methodology to a complex multi-dimensional scenario. \cref{sec:Proof} is dedicated to the proof of unbiasedness for our estimators, and in \cref{sec:Conclusion} we draw our conclusions and present some possible future directions of research.

\section{The Filtered Data Approach}\label{sec:Filter}

In this work, we propose an alternative to subsampling for preprocessing the data and infer effective dynamics. In particular, we smoothen the data with a continuous moving average, identified by the kernel
\begin{equation} \label{eq:filter_ma}
k_{\mathrm{ma}}^\delta(r) = \frac1\delta \chi_{[0,\delta]}(r),
\end{equation}
where $\chi_{[a,b]}$ denotes the indicator function of the interval $[a,b]$ for real numbers $a < b$, and where $\delta > 0$ is the size of the moving average window. We then take a time convolution of the kernel and the data and obtain the filtered trajectory 
\begin{equation}\label{eq:FilteredData}
Z_{\mathrm{ma}}^{\delta,\epl}(t) = \int_0^t k_{\mathrm{ma}}^\delta(t-s) X^\epl(s) \dd s = \frac1\delta \int_{t-\delta}^t X^\epl(s) \dd s, \qquad t \ge \delta.
\end{equation}
The process $Z_{\mathrm{ma}}^{\delta,\epl}$ is fully defined on the interval $t \in [0,T]$ by defining for small times $t \leq \delta$
\begin{equation}
Z_{\mathrm{ma}}^{\delta,\epl}(t) = \frac1t \int_0^t X^\epl(s) \dd s, \qquad 0 \leq t < \delta. 
\end{equation}
We remark that with this choice the process $Z_{\mathrm{ma}}^{\delta,\epl}$ is continuous on $[0, T]$, and can be seen as a smoothed version of the original trajectory where the fast oscillations are damped. 

The idea of smoothening the data with a low-pass filter has already been introduced in our previous work \cite{AGP21}. Instead of a moving average, we consider in \cite{AGP21} an exponential filtering kernel of the form
\begin{equation}\label{eq:filter}
k_{\mathrm{exp}}^{\delta,\beta}(r) = \frac{C_\beta}{\delta^{1/\beta}} \exp\left(-\frac{r^\beta}{\delta}\right), \qquad C_\beta =  \frac{\beta}{\Gamma(1/\beta)},
\end{equation}
where the constant $C_\beta$ normalizes the kernel in the sense
\begin{equation}
\int_0^\infty k_{\mathrm{exp}}^{\delta,\beta}(r) \dd r = 1.
\end{equation}
Filtered data $Z_{\mathrm{exp}}^{\delta,\beta,\epl}(t)$ are then obtained similarly to \eqref{eq:FilteredData} by taking a time convolution. We remark that the parameters $\delta$ and $\beta$ in \eqref{eq:filter} have two different roles. In particular, we have that $\beta$ is a \textit{shape parameter}, and $\delta$ is the \textit{filtering width}. 
The approaches presented here and in our previous work are closely related. In fact, it is simple to deduce that for almost every $r \geq 0$
\begin{equation}\label{eq:filter_new}
\lim_{\beta \to \infty} k_{\mathrm{exp}}^{\delta,\beta}(r) = \chi_{[0,1]}(r),
\end{equation}
independently of $\delta$. We remark that the theoretical analysis in \cite{AGP21} is restricted to the case where the shape parameter $\beta = 1$, despite numerical experiments suggest that choosing $\beta > 1$ yields better estimators. Studying the moving average kernel and due to \eqref{eq:filter_new} therefore partially fills the theoretical gap of our previous work.

\subsection{Estimating the Drift Coefficient}

We now present how in practice one employs filtered data to obtain asymptotically unbiased estimators of the effective drift coefficient. The main idea is modifying the classical MLE \eqref{eq:MLE} by replacing one occurrence of the original process $X^\epl$ with the filtered process $Z_{\mathrm{ma}}^{\delta,\epl}$ both in $M$ and $v$. In particular, the drift estimator $\widehat A_{\mathrm{ma}}^\delta(X^\epl, T)$ is defined as the solution of the linear system
\begin{equation} \label{eq:DriftEstimator}
-M_{\mathrm{ma}}^\delta(X^\epl, T) \widehat A_{\mathrm{ma}}^\delta(X^\epl, T) = v_{\mathrm{ma}}^\delta(X^\epl, T),
\end{equation}
where
\begin{equation}
\begin{aligned}
&M_{\mathrm{ma}}^\delta(X^\epl, T) \defeq \frac1T \int_0^T V'(Z_{\mathrm{ma}}^{\delta,\epl}(t)) \otimes V'(X^\epl(t)) \dd t, \\
&v_{\mathrm{ma}}^\delta(X^\epl, T) \defeq \frac1T \int_0^T V'(Z_{\mathrm{ma}}^{\delta,\epl}(t)) \dd X^\epl(t).
\end{aligned}
\end{equation}
An asymptotically unbiased estimator $\widehat A_{\mathrm{exp}}^{\delta,\beta}(X^\epl, T)$ of the drift coefficient based on exponential filtering kernels is obtained in the same way, by replacing $Z_{\mathrm{ma}}^{\delta, \epl}$ with $Z_{\mathrm{exp}}^{\delta, \beta, \epl}$, as shown in \cite{AGP21}
We remark that it is fundamental to keep $X^\epl(t)$ in the differential of the right-hand side $v(X^\epl, T)$, as discussed in \cite[Remark 3.7]{AGP21}. For the well-posedness of the estimator $\widehat A_{\mathrm{ma}}^\delta(X^\epl, T)$ the matrix $M_{\mathrm{ma}}^\delta(X^\epl, T)$ needs to be invertible. We take this property as an assumption in the theoretical analysis but we observe that it holds in practice in all the numerical experiments.

\subsection{Estimating the Diffusion Coefficient}

We now focus on inferring the effective diffusion coefficient and we propose two different estimators. The first one is given by 
\begin{equation} \label{eq:DiffusionEstimator}
\widehat \Sigma_{\mathrm{ma}}^\delta(X^\epl, T) = \frac1{\delta T} \int_\delta^T \left(X^\epl(t) - Z_{\mathrm{ma}}^{\delta,\epl}(t) \right) \left(X^\epl(t) - X^\epl(t-\delta)\right) \dd t,
\end{equation}
which is analogous to the diffusion estimator presented in \cite[Section 3]{AGP21}.

A different methodology for estimating the diffusion coefficient of the homogenized equation can be derived from the particular form of \eqref{eq:SDE_Hom_Lang_d1}. We know that $\Sigma = \sigma \mathcal K$ and therefore an estimation of $\Sigma$ can be obtained by first estimating $\sigma$ and $\mathcal K$. The former can be computed exactly due to \eqref{eq:failure_classic}, indeed we have
\begin{equation}
\sigma = \frac{\langle X^\epl \rangle_T}{2T},
\end{equation}
while for the latter we use the fact that $A = \alpha \mathcal K$. Given an estimator $\widehat A$ for the effective drift coefficient and since by \eqref{eq:failure_classic} we know that $\widehat \alpha = \widehat A_{\mathrm{MLE}}(X^\epl,T)$ approximates $\alpha$, we write
\begin{equation} \label{eq:system_diffusion}
\widehat \alpha \widehat{\mathcal K} = \widehat A,
\end{equation}
which is an overdetermined linear system with $N$ equations and one unknown, and where $\widehat{\mathcal K}$ denotes the estimator of the coefficient $\mathcal K$ which we aim to infer. The least squares solution to \eqref{eq:system_diffusion} is given by
\begin{equation}
\widehat{\mathcal K} = \frac{\widehat \alpha^\top \widehat A}{\widehat \alpha^\top \widehat \alpha}.
\end{equation}
Assuming that an estimator $\widehat A(X^\epl, T)$ of the effective drift coefficient has already been computed, the effective diffusion coefficient can then be estimated as
\begin{equation} \label{eq:DiffusionTildeEstimator}
\widetilde \Sigma(X^\epl,T) = \frac{\langle X^\epl \rangle_T (\widehat A_{\mathrm{MLE}}(X^\epl,T)^\top \widehat A(X^\epl,T))}{2T(\widehat A_{\mathrm{MLE}}(X^\epl,T)^\top \widehat A_{\mathrm{MLE}}(X^\epl,T))}.
\end{equation}

\begin{remark} In practice, the stream $X^\epl$ consists of high-frequency discrete data, and not of a continuous process. Hence, the filtered data and all our estimators are computed in practice with the usual appropriate discretizations. We notice that if $X^\epl$ consists of $n$ data points, the time complexity needed to compute the filtered trajectory $Z^{\delta,\epl}_{\mathrm{ma}}$ is of order $\mathcal O(n)$.
\end{remark}

\subsection{Statement of Asymptotic Unbiasedness Results}

In this section we present the main theoretical results of this work, i.e., the asymptotic unbiasedness of the proposed estimators. We first introduce the assumptions which will be employed in the analysis. In particular, we restrict the analysis to the same dissipative framework as \cite{AGP21,PaS07}.
\begin{assumption}\label{as:regularity} The functions $p$ and $V$ satisfy:
	\begin{enumerate}[leftmargin=1cm]
		\item\label{as:periodicity} $p \in \mathcal C^\infty(\R)$ and is $L$-periodic for some $L > 0$,
		\item\label{as:regularity_diss} $V_i \in \mathcal C^\infty(\R)$ for all $i=1, \ldots, N$ and are polynomially bounded from above and bounded from below. Moreover, the potential is dissipative, i.e., there exist $a,b > 0$ such that
		\begin{equation}
		-\alpha \cdot V'(x) x \leq a - bx^2,
		\end{equation} 
		\item\label{as:regularity_Lip} $V'$ and $V''$ are Lipschitz continuous, i.e., there exists a constant $C > 0$ such that
		\begin{equation}
		\norm{V'(x) - V'(y)} \leq C\abs{x - y}, \qquad \text{and} \qquad \norm{V''(x) - V''(y)} \leq C\abs{x - y},
		\end{equation} 
		where $\norm{\cdot}$ denotes the Euclidean norm, and the components $V'_i$ and $V''_i$ are polynomially bounded for all $i = 1, \ldots, N$.
	\end{enumerate}
\end{assumption}
\begin{remark}\cref{as:regularity} has the following consequences:
	\begin{enumerate}[leftmargin=1cm]
		\item \cref{as:regularity}\ref{as:periodicity} allows to employ the theory of periodic homogenization to conclude that \eqref{eq:SDE_Hom_Lang_d1} is effective for equation \eqref{eq:SDE_MS_Lang_d1}.
		\item \cref{as:regularity}\ref{as:regularity_diss} implies that the solutions of \eqref{eq:SDE_MS_Lang_d1} and \eqref{eq:SDE_Hom_Lang_d1} are geometrically ergodic, and thus the existence of unique invariant measures (see \cite[Propositions 5.1 and 5.2]{PaS07}).
		\item \cref{as:regularity}\ref{as:regularity_Lip} is technical, and guarantees sufficient regularity for our main results to hold.
	\end{enumerate}
\end{remark}

We first consider the drift estimator $\widehat A_{\mathrm{ma}}^\delta(X^\epl,T)$, which is asymptotically unbiased due to the following result.
\begin{theorem}\label{thm:Drift} Let $\widehat A_{\mathrm{ma}}^\delta(X^\epl, T)$ be defined in \eqref{eq:DriftEstimator} with $\delta$ independent of $\epl$ or $\delta = \epl^\zeta$ where $\zeta \in (0, 2)$. Under \cref{as:regularity} and if $M_{\mathrm{ma}}^\delta(X^\epl, T)$ is invertible, it holds
	\begin{equation}\label{eq:ThmDrift}
	\lim_{\epl\to 0} \lim_{T\to\infty} \widehat A_{\mathrm{ma}}^\delta(X^\epl, T) = A, \quad \text{a.s.},
	\end{equation}
	where $A$ is the drift coefficient of the homogenized equation \eqref{eq:SDE_Hom_Lang_d1}.
\end{theorem}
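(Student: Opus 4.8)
The plan is to study $\widehat A_{\mathrm{ma}}^\delta(X^\epl,T) = -\bigl(M_{\mathrm{ma}}^\delta(X^\epl,T)\bigr)^{-1} v_{\mathrm{ma}}^\delta(X^\epl,T)$ by taking the two limits in the stated order: first an ergodic average as $T\to\infty$ at fixed $\epl$, then a homogenization average as $\epl\to0$. For the inner limit, the key observation is that $Z_{\mathrm{ma}}^{\delta,\epl}$ is not Markovian by itself, but since $\dot Z_{\mathrm{ma}}^{\delta,\epl}(t) = \delta^{-1}\bigl(X^\epl(t)-X^\epl(t-\delta)\bigr)$, the triple $\bigl(X^\epl(t),Z_{\mathrm{ma}}^{\delta,\epl}(t),X^\epl(t-\delta)\bigr)$ closes into a system of SDDEs whose natural state is the path segment. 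Building on the geometric ergodicity of $X^\epl$ provided by \cref{as:regularity}, I would establish existence of a unique invariant measure $\mu^\epl$ for the associated segment process together with a strong law of large numbers, which yields $M_{\mathrm{ma}}^\delta(X^\epl,T)\to M_\infty^{\delta,\epl}:=\E_{\mu^\epl}[V'(Z_{\mathrm{ma}}^{\delta,\epl})\otimes V'(X^\epl)]$ almost surely. For $v_{\mathrm{ma}}^\delta$, I would substitute the SDE for $\dd X^\epl$, observe that the stochastic integral $\tfrac{\sqrt{2\sigma}}{T}\int_0^T V'(Z_{\mathrm{ma}}^{\delta,\epl})\dd W$ vanishes almost surely by the martingale law of large numbers, and identify the remaining drift contributions as ergodic averages.

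The central difficulty in the $\epl\to0$ step is the singular term $\epl^{-1}p'(X^\epl/\epl)$ hidden in $\dd X^\epl$. I would remove it using the cell-problem solution $\Phi$: applying Itô's formula to $\epl\Phi(X^\epl/\epl)$ and invoking $\mathcal L_0\Phi=p'$ gives
\[
\tfrac1\epl p'(X^\epl/\epl)\dd t = \alpha\cdot V'(X^\epl)\,\Phi'(X^\epl/\epl)\dd t + \dd\!\left[\epl\Phi(X^\epl/\epl)\right] - \sqrt{2\sigma}\,\Phi'(X^\epl/\epl)\dd W .
\]
Substituting this into $v_{\mathrm{ma}}^\delta$ replaces the singular drift by the bounded factor $\bigl(1+\Phi'(X^\epl/\epl)\bigr)$ multiplying $\alpha\cdot V'(X^\epl)$, at the cost of a correction driven by the $O(\epl)$ process $\epl\Phi(X^\epl/\epl)$. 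Integrating that correction by parts against $V'(Z_{\mathrm{ma}}^{\delta,\epl})$ and using $\dot Z_{\mathrm{ma}}^{\delta,\epl} = \delta^{-1}(X^\epl(t)-X^\epl(t-\delta))$, its size is of order $\epl/\sqrt\delta$; with $\delta=\epl^\zeta$ this is $\epl^{1-\zeta/2}$, which vanishes precisely when $\zeta<2$ and accounts for the admissible range in the statement.

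It then remains to average over the fast scale. I would invoke the standard two-scale averaging property that, under $\mu^\epl$, the fast variable $X^\epl/\epl$ equilibrates to $\nu$ asymptotically independently of the slow observables, while $V'(X^\epl)\to V'(X^0)$ and $V'(Z_{\mathrm{ma}}^{\delta,\epl})\to V'(Z_{\mathrm{ma}}^{\delta,0})$, the latter being the moving average of the homogenized process. Using the one-dimensional identity $\int_0^L(1+\Phi'(y))\dd\nu(y)=\mathcal K$, which follows because $e^{-p/\sigma}(1+\Phi')$ is constant by the cell problem, the factor $\bigl(1+\Phi'(X^\epl/\epl)\bigr)$ averages to $\mathcal K$. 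This gives $M_\infty^{\delta,\epl}\to M^0:=\E_{\mu^0}[V'(Z_{\mathrm{ma}}^{\delta,0})\otimes V'(X^0)]$ and $v_\infty^{\delta,\epl}\to -\mathcal K\,M^0\alpha$, whence, provided $M^0$ inherits the invertibility assumed for $M_{\mathrm{ma}}^\delta$,
\[
\lim_{\epl\to0}\lim_{T\to\infty}\widehat A_{\mathrm{ma}}^\delta(X^\epl,T) = -(M^0)^{-1}\bigl(-\mathcal K\,M^0\alpha\bigr) = \alpha\mathcal K = A .
\]
Note that the factor $\mathcal K$ enters only through $v$, which carries the singular fast drift, whereas $M$ is a purely slow quadratic observable; this asymmetry is exactly what produces $A=\alpha\mathcal K$.

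I expect the main obstacle to be the first step: the rigorous ergodic theory for the delay system, namely existence, uniqueness, and the strong law of large numbers for the invariant measure of the segment process, which is not covered by the ergodicity of $X^\epl$ alone and which the authors flag as their novel contribution. Closely related is the need for moment bounds uniform in $\epl$, in order to justify interchanging the $\epl\to0$ limit with the expectations under $\mu^\epl$ and to control the $O(\epl^{1-\zeta/2})$ correction pathwise.
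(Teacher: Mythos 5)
Your proposal takes a genuinely different route from the paper's in its core mechanism. The paper writes $\widehat A_{\mathrm{ma}}^\delta = \alpha + I_1 - I_2$, kills $I_2$ by the martingale law of large numbers, and then handles the singular term $I_1$ \emph{in the mean}: using the stationary Fokker--Planck equation of the SDDE system \eqref{eq:System} (\cref{prop:FokkerPlanck}) and an integration by parts (\cref{lem:Magic}), it converts the ergodic limit into the phase-free correlation functional $\mathcal A^\epl(\delta)=\frac1\delta\widetilde{\mathcal M}_\epl^{-1}\E^{\widetilde\mu^\epl}\left[(X^\epl(\delta)-Z_{\mathrm{ma}}^{\delta,\epl}(\delta))(X^\epl(\delta)-X^\epl(0))V''(Z_{\mathrm{ma}}^{\delta,\epl}(\delta))\right]$, in which no rapidly oscillating factor survives, so that the limit $\epl\to0$ follows from weak convergence of $(X^\epl,Z_{\mathrm{ma}}^{\delta,\epl})$ alone when $\delta$ is fixed, and from a corrector expansion (\cref{lem:distance_XZ}) when $\delta=\epl^\zeta$. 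You instead eliminate $\epl^{-1}p'$ \emph{pathwise} by It\^o's formula on $\epl\Phi(X^\epl/\epl)$; this computation is correct (it is the same mechanism behind the decomposition of Pavliotis--Stuart that the paper borrows in \cref{lem:distance_XZ}), and your identification of the $O(\epl\delta^{-1/2})=O(\epl^{1-\zeta/2})$ correction as the source of the restriction $\zeta<2$ is accurate and matches the paper's remainder bounds.

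The gap is in your final averaging step, and it is concentrated in the case $\delta$ independent of $\epl$. Your argument requires $\E^{\widetilde\mu^\epl}\left[V'(Z_{\mathrm{ma}}^{\delta,\epl})\otimes V'(X^\epl)\,(1+\Phi'(X^\epl/\epl))\right]\to\mathcal K\,\E^{\widetilde\mu^0}\left[V'(Z_{\mathrm{ma}}^{\delta,0})\otimes V'(X^0)\right]$, i.e.\ asymptotic independence, under the stationary law of the delay system, of the terminal fast phase $X^\epl(\delta)/\epl$ from the \emph{pair} $(X^\epl(\delta),Z_{\mathrm{ma}}^{\delta,\epl}(\delta))$, with the phase distributed as $\nu$. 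This is not ``standard two-scale averaging'': that tool applies to integrals against the explicit Gibbs density $\rho^\epl$, whereas $\widetilde\mu^\epl$ has no closed form and $Z_{\mathrm{ma}}^{\delta,\epl}(\delta)$ is a path functional over a window of fixed width whose correlation with the terminal phase must be shown to vanish. Weak convergence from homogenization does not help either, since $\Phi'(X^\epl(\delta)/\epl)$ does not converge along paths. A conditioning/mixing argument (condition at time $\delta-\eta$ with $\epl^2\ll\eta\ll\delta$ and use geometric equilibration of the phase) could plausibly close this, but it is a genuine missing piece --- and it is precisely the difficulty that \cref{lem:Magic} is engineered to bypass. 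When $\delta=\epl^\zeta\to 0$ your claim does reduce to standard averaging (replace $Z_{\mathrm{ma}}^{\delta,\epl}$ by $X^\epl$ up to $O(\delta^{1/2}+\epl)$ via Lipschitzness of $V'$, then average against $\rho^\epl$, as in \cref{lem:distance_MeplM0}), so that case is sound modulo the moment bounds you flagged. Finally, the SDDE ergodicity you single out as the main obstacle is actually the easy part: as in \cref{prop:FokkerPlanck}, at stationarity $(X^\epl(t),Z_{\mathrm{ma}}^{\delta,\epl}(t))$ is an explicit functional of the stationary path of $X^\epl$ on $[t-\delta,t]$, so existence and uniqueness of $\widetilde\mu^\epl$ are inherited directly from those of $\mu^\epl$.
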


The following result of asymptotic unbiasedness holds for the estimator $\widehat \Sigma_{\mathrm{ma}}^\delta(X^\epl,T)$ of the effective diffusion.

\begin{theorem}\label{thm:Diffusion} Let $\widehat \Sigma_{\mathrm{ma}}^\delta(X^\epl, T)$ be defined in \eqref{eq:DiffusionEstimator} with $\delta = \epl^\zeta$ where $\zeta \in (0, 2)$. Under \cref{as:regularity}, it holds
	\begin{equation}
	\lim_{\epl\to 0} \lim_{T\to\infty} \widehat \Sigma_{\mathrm{ma}}^\delta(X^\epl, T) = \Sigma, \quad \text{a.s.},
	\end{equation}
	where $\Sigma$ is the diffusion coefficient of the homogenized equation \eqref{eq:SDE_Hom_Lang_d1}.
\end{theorem}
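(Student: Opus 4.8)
The plan is to take the two limits in the order dictated by the statement: first $T\to\infty$ at fixed $\epl$, using ergodicity, and then $\epl\to0$, using homogenization. For the inner limit I would note that the integrand $\bigl(X^\epl(t)-Z_{\mathrm{ma}}^{\delta,\epl}(t)\bigr)\bigl(X^\epl(t)-X^\epl(t-\delta)\bigr)$ is a continuous functional of the path segment $\{X^\epl(t+\theta):\theta\in[-\delta,0]\}$, which is a Markov process on path space governed by the same system of SDDEs used to prove \cref{thm:Drift}. Under \cref{as:regularity}\ref{as:regularity_diss} this segment process is geometrically ergodic with a unique invariant law (the delayed analogue of \cite[Proposition 5.1]{PaS07}), so the ergodic theorem yields, almost surely,
\begin{equation}
\lim_{T\to\infty}\widehat\Sigma_{\mathrm{ma}}^\delta(X^\epl,T)=\frac1\delta\,\E^\epl\!\left[\bigl(X^\epl(t)-Z_{\mathrm{ma}}^{\delta,\epl}(t)\bigr)\bigl(X^\epl(t)-X^\epl(t-\delta)\bigr)\right]\eqdef G(\epl,\delta),
\end{equation}
where $\E^\epl$ denotes the stationary expectation (independent of $t\ge\delta$). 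It then remains to show $G(\epl,\delta)\to\Sigma$ as $\epl\to0$ along $\delta=\epl^\zeta$, $\zeta\in(0,2)$.

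For the outer limit I would first rewrite both factors as window integrals against the dynamics \eqref{eq:SDE_MS_Lang_d1},
\begin{equation}
X^\epl(t)-X^\epl(t-\delta)=\int_{t-\delta}^t\d X^\epl(u),\qquad X^\epl(t)-Z_{\mathrm{ma}}^{\delta,\epl}(t)=\frac1\delta\int_{t-\delta}^t\bigl(u-(t-\delta)\bigr)\,\d X^\epl(u).
\end{equation}
Substituting drift and diffusion, the slow drift $-\alpha\cdot V'(X^\epl)$ produces contributions that are $\OO(\delta^{1/2})$ after division by $\delta$ and hence vanish, whereas the fast drift $-\epl^{-1}p'(X^\epl/\epl)$ cannot be discarded: it is precisely what upgrades the bare coefficient $\sigma$ to the effective $\Sigma=\sigma\mathcal K$. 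To isolate this mechanism, the key device is the corrector $\Phi$ solving the cell problem \eqref{eq:Cell}. Applying Itô's formula to $X^\epl(t)+\epl\Phi(X^\epl(t)/\epl)$ and using $\mathcal L_0\Phi=\nabla p$, the $\OO(\epl^{-1})$ terms cancel and the corrected process acquires a martingale part with local quadratic variation $2\sigma\,\bigl(1+\Phi'(X^\epl/\epl)\bigr)^2$. Since $\Phi$ is bounded the corrector differences $\epl[\Phi(X^\epl(t)/\epl)-\Phi(X^\epl(t-\delta)/\epl)]$ are $\OO(\epl)$, so modulo lower-order terms both increments above are driven by $\sqrt{2\sigma}\int(1+\Phi'(X^\epl/\epl))\,\d W$.

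Computing $G(\epl,\delta)$ through the Itô isometry then reduces to
\begin{equation}
G(\epl,\delta)=\frac{2\sigma}{\delta^2}\int_{t-\delta}^t\bigl(u-(t-\delta)\bigr)\,\E^\epl\!\left[\bigl(1+\Phi'(X^\epl(u)/\epl)\bigr)^2\right]\d u+o(1).
\end{equation}
Because $\zeta<2$ forces $\delta\gg\epl^2$, the equilibration time of the fast variable $X^\epl/\epl$, the fast scale averages over each window and the expectation converges to $\int_{\mathbb L}(1+\Phi')^2\,\d\nu=\mathcal K$ (the $d=1$ instance of \eqref{eq:HomogenizationCoeff}); since the remaining weight integrates to $\delta^2/2$, this gives $G(\epl,\delta)\to\sigma\mathcal K=\Sigma$, as desired.

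The hard part will be making the outer limit rigorous. Two points demand care: (a) showing that the corrector faithfully repackages the $\OO(\epl^{-1})$ fast drift into the effective martingale, and that the corrector-difference and slow-drift remainders are genuinely negligible after division by $\delta$ — which again needs $\zeta<2$, since their contributions scale like $\OO(\epl\,\delta^{-1/2})=\OO(\epl^{1-\zeta/2})$; and (b) proving $\E^\epl[(1+\Phi'(X^\epl(u)/\epl))^2]\to\mathcal K$ with enough uniformity across the window and justifying the interchange of $\lim_{\epl\to0}$ with $\E^\epl$. Both rest on uniform-in-$\epl$ stationary moment bounds for $X^\epl$, for the corrector terms, and for $\epl^{-1}\int p'(X^\epl/\epl)\,\d u$, all of which are furnished by the dissipativity assumption \cref{as:regularity}\ref{as:regularity_diss} exactly as in the proof of \cref{thm:Drift}.
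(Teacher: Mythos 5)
Your proposal is correct and follows essentially the same route as the paper: the ergodic theorem for the stationary pair $(X^\epl, Z_{\mathrm{ma}}^{\delta,\epl})$ gives the inner limit, and your outer limit is exactly the paper's \emph{Case 2} of the proof of \cref{thm:Drift} with $V''$ replaced by $1$, namely the corrector decomposition of \cite{PaS07}, the weighted-window Fubini identity for $X^\epl - Z_{\mathrm{ma}}^{\delta,\epl}$, the It\^o isometry, and averaging $(1+\Phi')^2$ against $\nu$ to produce $\mathcal K$, so that $\frac{2\sigma}{\delta^2}\cdot\mathcal K\cdot\frac{\delta^2}{2} = \Sigma$. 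Your remainder accounting, with the $\OO(\epl\,\delta^{-1/2}) = \OO(\epl^{1-\zeta/2})$ term pinpointing the constraint $\zeta < 2$, matches the paper's bound $\OO(\delta^{1/2} + \epl\,\delta^{-1/2} + \epl^2\delta^{-1})$.
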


We notice from the statement of \cref{thm:Diffusion} that the main limitation of $\widehat \Sigma_{\mathrm{ma}}^\delta$ is that knowledge of the scale-separation parameter $\epl$ is necessary for unbiasedness. Conversely, the estimator $\widetilde \Sigma(X^\epl,T)$ of the effective diffusion can be asymptotically unbiased even if $\delta$ is independent of $\epl$. Indeed, unbiasedness of $\widetilde \Sigma(X^\epl, T)$ solely relies on the accuracy of the corresponding drift estimator, as shown by the following result.

\begin{theorem}\label{thm:Diffusion_tilde} Let $\widehat A$ be an asymptotically unbiased estimator of the effective drift coefficient, i.e.,
	\begin{equation} \label{eq:hypothesis_convergence_drift}
	\lim_{\epl \to 0} \lim_{T \to \infty} \widehat A(X^\epl,T) = A, \quad \text{a.s.},
	\end{equation}
	where $A$ is the drift coefficient of the homogenized equation \eqref{eq:SDE_Hom_Lang_d1}, and let $\widetilde \Sigma(X^\epl, T)$ be defined in \eqref{eq:DiffusionTildeEstimator}. Under \cref{as:regularity}, it holds
	\begin{equation}
	\lim_{\epl\to 0} \lim_{T\to\infty} \widetilde \Sigma(X^\epl, T) = \Sigma, \quad \text{a.s.},
	\end{equation}
	where $\Sigma$ is the diffusion coefficient of the homogenized equation \eqref{eq:SDE_Hom_Lang_d1}.
\end{theorem}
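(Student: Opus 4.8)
The strategy is to factor $\widetilde\Sigma(X^\epl,T)$ into the product of the standard quadratic-variation estimator of $\sigma$ and a scalar built from the two drift estimators, pass the iterated limit $\lim_{\epl\to0}\lim_{T\to\infty}$ through each factor separately, and then exploit the homogenization identities $A=\alpha\mathcal K$ and $\Sigma=\sigma\mathcal K$ to collapse the resulting expression to $\Sigma$. Concretely, I would first write
\begin{equation}
\widetilde\Sigma(X^\epl,T) = \frac{\langle X^\epl\rangle_T}{2T}\cdot\frac{\widehat A_{\mathrm{MLE}}(X^\epl,T)^\top\widehat A(X^\epl,T)}{\widehat A_{\mathrm{MLE}}(X^\epl,T)^\top\widehat A_{\mathrm{MLE}}(X^\epl,T)}.
\end{equation}
The first factor requires no limiting argument: since every drift term in \eqref{eq:SDE_MS_Lang_d1} is continuous and of bounded variation (the fast term $-\epl^{-1}p'(\cdot/\epl)$ is bounded because $p$ is smooth and periodic), it contributes nothing to the quadratic variation, so $\langle X^\epl\rangle_T=2\sigma T$ pathwise and the first factor equals $\sigma$ identically in $T$ and $\epl$, in agreement with \eqref{eq:failure_classic}. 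It therefore remains to handle the second factor.

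For the inner limit $T\to\infty$ at fixed $\epl$, I would invoke the convergence $\widehat A_{\mathrm{MLE}}(X^\epl,T)\to\widehat\alpha_\infty(\epl)$ underlying \eqref{eq:failure_classic}, together with the existence of $\lim_{T\to\infty}\widehat A(X^\epl,T)\eqdef\widehat A_\infty(\epl)$ built into the hypothesis \eqref{eq:hypothesis_convergence_drift}. The map $(u,w)\mapsto u^\top w/(u^\top u)$ is continuous at every point with $u\neq0$, so the inner limit of the second factor exists almost surely and equals $\widehat\alpha_\infty(\epl)^\top\widehat A_\infty(\epl)/(\widehat\alpha_\infty(\epl)^\top\widehat\alpha_\infty(\epl))$. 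Letting then $\epl\to0$ and using $\widehat\alpha_\infty(\epl)\to\alpha$ from \eqref{eq:failure_classic} and $\widehat A_\infty(\epl)\to A$ from \eqref{eq:hypothesis_convergence_drift}, the same continuity yields the outer limit $\alpha^\top A/(\alpha^\top\alpha)$. Substituting the homogenization relation $A=\mathcal K\alpha$ gives $\alpha^\top A=\mathcal K\,\alpha^\top\alpha$, hence $\alpha^\top A/(\alpha^\top\alpha)=\mathcal K$; multiplying by the first factor $\sigma$ produces $\sigma\mathcal K=\Sigma$, as claimed.

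The one point I would flag carefully is the non-degeneracy needed to justify applying continuity of the rational map: one must ensure $\alpha\neq0$ so that the limiting denominator $\alpha^\top\alpha$ is nonzero, and, at finite $T$ and $\epl$, that $\widehat A_{\mathrm{MLE}}(X^\epl,T)\neq0$ almost surely. The former holds whenever the multiscale drift is nontrivial, and the latter follows from the almost-sure positivity of the associated quadratic form. Neither is a real difficulty, but they are precisely what makes the otherwise purely algebraic cancellation $\alpha^\top A/(\alpha^\top\alpha)=\mathcal K$ legitimate. Conceptually, the estimator succeeds because in the homogenization limit the MLE target $\alpha$ and the effective drift $A$ are parallel vectors, both proportional to the common scalar $\mathcal K$, so the least-squares projection of $\widehat A$ onto $\widehat\alpha$ recovers $\mathcal K$ exactly; the bulk of the work is then routine bookkeeping in passing the two limits through continuous functions.
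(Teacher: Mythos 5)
Your proposal is correct and follows essentially the same route as the paper's proof: the quadratic variation gives $\langle X^\epl\rangle_T/(2T)=\sigma$ exactly, the iterated limits are passed through the ratio $\widehat A_{\mathrm{MLE}}^\top\widehat A/(\widehat A_{\mathrm{MLE}}^\top\widehat A_{\mathrm{MLE}})$ using \eqref{eq:failure_classic} and the hypothesis \eqref{eq:hypothesis_convergence_drift}, and the identities $A=\mathcal K\alpha$, $\Sigma=\mathcal K\sigma$ collapse the result to $\Sigma$. Your additional remarks on non-degeneracy ($\alpha\neq 0$ and continuity of the rational map) make explicit details the paper leaves implicit, but do not change the argument.
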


The following corollary is a direct consequence of \cref{thm:Diffusion_tilde}.

\begin{corollary} \label{cor:Diffusion_tilde}
	Let $\widetilde \Sigma_{\mathrm{ma}}^\delta(X^\epl,T)$, $\widetilde \Sigma_{\mathrm{exp}}^{\delta,\beta}(X^\epl,T)$, $\widetilde \Sigma_{\mathrm{sub}}^\delta(X^\epl,T)$ be defined by \eqref{eq:DiffusionTildeEstimator} with $\widehat A(X^\epl,T)$ replaced by $\widehat A_{\mathrm{ma}}^\delta(X^\epl,T)$, $\widehat A_{\mathrm{exp}}^{\delta,\beta}(X^\epl,T)$, and $\widehat A_{\mathrm{sub}}^\delta(X^\epl,T)$, respectively. If $\delta$ is independent of $\epl$ or $\delta = \epl^\zeta$ with $\zeta \in (0,2)$, then
	\begin{equation}
	\lim_{\epl \to 0} \lim_{T \to \infty} \widetilde \Sigma_{\mathrm{ma}}^\delta = \Sigma, \quad \text{a.s.}, \qquad \lim_{\epl \to 0} \lim_{T \to \infty} \widetilde \Sigma_{\mathrm{exp}}^{\delta,\beta} = \Sigma, \quad \text{a.s.},
	\end{equation}
	and if  $\delta = \epl^\zeta$ with $\zeta \in (0,1)$
	\begin{equation}
	\lim_{\epl \to 0} \lim_{T \to \infty} \widetilde \Sigma_{\mathrm{sub}}^\delta = \Sigma, \quad \text{a.s.},
	\end{equation}
	where $\Sigma$ is the diffusion coefficient of the homogenized equation \eqref{eq:SDE_Hom_Lang_d1}.
\end{corollary}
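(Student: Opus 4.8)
The plan is to reduce everything to \cref{thm:Diffusion_tilde}, which already establishes that \emph{any} drift estimator satisfying the convergence hypothesis \eqref{eq:hypothesis_convergence_drift} automatically produces an asymptotically unbiased diffusion estimator through the formula \eqref{eq:DiffusionTildeEstimator}. Since $\widetilde \Sigma_{\mathrm{ma}}^\delta$, $\widetilde \Sigma_{\mathrm{exp}}^{\delta,\beta}$, and $\widetilde \Sigma_{\mathrm{sub}}^\delta$ are by definition obtained by inserting $\widehat A_{\mathrm{ma}}^\delta$, $\widehat A_{\mathrm{exp}}^{\delta,\beta}$, and $\widehat A_{\mathrm{sub}}^\delta$ into \eqref{eq:DiffusionTildeEstimator}, it suffices to verify that each of these three drift estimators satisfies \eqref{eq:hypothesis_convergence_drift} under the $\delta$-regime prescribed in the corollary. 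The conclusion then follows by applying \cref{thm:Diffusion_tilde} once for each estimator.

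First I would treat the moving-average estimator: for $\delta$ independent of $\epl$ or $\delta = \epl^\zeta$ with $\zeta \in (0,2)$, the convergence \eqref{eq:hypothesis_convergence_drift} for $\widehat A_{\mathrm{ma}}^\delta$ is precisely the content of \cref{thm:Drift}, so no additional work is required. For the exponential-filter estimator $\widehat A_{\mathrm{exp}}^{\delta,\beta}$, the analogous asymptotic unbiasedness over the same range of $\delta$ is established in \cite{AGP21}; invoking that result verifies \eqref{eq:hypothesis_convergence_drift} in this case. For the subsampled estimator $\widehat A_{\mathrm{sub}}^\delta$, the classical subsampling theory of \cite[Theorems 3.5 and 3.6]{PaS07} guarantees almost sure convergence to the effective drift coefficient $A$ exactly when $\delta = \epl^\zeta$ with $\zeta \in (0,1)$, which explains the more restrictive exponent range appearing in the corresponding statement.

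Each of these three verifications supplies the hypothesis of \cref{thm:Diffusion_tilde}, and the theorem then delivers the claimed limit $\Sigma$ almost surely for the associated diffusion estimator. I expect the only point requiring care to be the bookkeeping of admissible $\delta$-regimes: the moving-average and exponential estimators inherit the wider window $\zeta \in (0,2)$ together with the $\delta$-independent-of-$\epl$ case, whereas subsampling is confined to $\zeta \in (0,1)$ since its drift estimator is asymptotically unbiased only there. No genuinely new estimate is needed beyond matching these hypotheses, so the corollary is indeed an immediate consequence of \cref{thm:Diffusion_tilde} combined with the cited drift-convergence results.
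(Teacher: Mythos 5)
Your overall strategy coincides with the paper's: reduce the corollary to \cref{thm:Diffusion_tilde} and verify hypothesis \eqref{eq:hypothesis_convergence_drift} separately for the three drift estimators, with \cref{thm:Drift} disposing of the moving-average case. That reduction is correct and is exactly what the paper does.

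The gap is in how you verify the hypothesis for subsampling (and, more mildly, for the exponential filter). You assert that \cite[Theorems 3.5 and 3.6]{PaS07} ``guarantee almost sure convergence'' of $\widehat A_{\mathrm{sub}}^\delta(X^\epl,T)$ to $A$ for $\delta=\epl^\zeta$, $\zeta\in(0,1)$. They do not, as stated: the convergence proved there is not in the almost-sure, iterated-limit form $\lim_{\epl\to 0}\lim_{T\to\infty}$ that \eqref{eq:hypothesis_convergence_drift} demands, so a bare citation leaves the hypothesis of \cref{thm:Diffusion_tilde} unverified for the subsampled estimator. The paper closes precisely this gap in \cref{rem:proofPS}: under \cref{as:regularity} the ergodic theorem gives the a.s.\ limit $\lim_{T\to\infty}\widehat A_{\mathrm{sub}}(X^\epl,T)$ as a ratio of stationary expectations, and then the martingale decomposition of $X^\epl$ through the cell problem \eqref{eq:Cell} (via \cite[Lemma 5.2]{APZ22}) shows this limit equals $A+\widetilde R(\epl,\delta)$ with $\abs{\widetilde R(\epl,\delta)}\le C\left(\epl\delta^{-1}+\delta^{1/2}\right)$, a remainder that vanishes as $\epl\to 0$ exactly when $\delta=\epl^\zeta$ with $\zeta\in(0,1)$ --- which is the real source of the restricted exponent range, not merely an inherited bookkeeping constraint. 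A similar but smaller point applies to the exponential filter: the limit in \cite[Theorem 3.18]{AGP21} is not stated in the a.s.\ sense, and the paper must remark explicitly that it also holds a.s.\ before \cref{thm:Diffusion_tilde} applies. So your reduction is the right one, but the corollary is not purely a matter of matching $\delta$-regimes: the subsampling leg requires the additional argument of \cref{rem:proofPS} to upgrade the cited convergence to the form the hypothesis requires.
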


Two remarks are due.
\begin{remark} In our approach, the scale-separation parameter $\epl$ need not be known. In particular, \cref{thm:Drift,cor:Diffusion_tilde} show that our estimators are asymptotically unbiased if $\delta$ is independent of $\epl$, i.e., the filtering width equals the speed of the homogenized process. Since $\epl$ is in general unknown in practice, this constitutes an advantage with respect to, e.g., subsampling, for which knowledge of $\epl$ is necessary. Moreover, as we demonstrate numerically in \cref{sec:NumExp}, modifying the filtering width has a weak impact on the inference results as long as $\delta \in [\epl, 1]$ when $T < \infty$ and $\epl > 0$.
\end{remark}

\begin{remark} Despite being more accurate in practice, as demonstrated by our numerical experiments, and not requiring knowledge of $\epl$, the estimator $\widetilde \Sigma_{\mathrm{ma}}^\delta$ is computationally more expensive to obtain than $\widehat \Sigma_{\mathrm{ma}}^\delta$. Indeed, computing $\widetilde \Sigma_{\mathrm{ma}}^\delta$ requires estimators for the parameters $\alpha$ and $\sigma$ of the multiscale equation \eqref{eq:SDE_MS_Lang_d1}, as well as the drift coefficient $A$ of the homogenized model \eqref{eq:SDE_Hom_Lang_d1}. Hence, if a very accurate estimate for the whole effective equation is needed, we recommend to employ $\widetilde \Sigma_{\mathrm{ma}}^\delta$, while $\widehat \Sigma_{\mathrm{ma}}^\delta$ can be used in case only the diffusion coefficient is required.
\end{remark}

The proof of \cref{thm:Drift,thm:Diffusion,thm:Diffusion_tilde} is obtained by applying techniques similar to the ones employed in \cite{PaS07,AGP21,APZ22}, and is presented in detail in \cref{sec:Proof}.

\subsection{The Multi-Dimensional Case}

In this section we report the expression of the estimators for higher dimensions. Indeed, the choice $d = 1$ in the remainder of the paper is made only for economy of notation, and for clarity. Moreover, the proofs of \cref{thm:Drift,thm:Diffusion,thm:Diffusion_tilde} would be conceptually unchanged by considering $d > 1$, up to tedious technical details. Let $\nabla V \colon \R^d \to \R^{dN}$ be the vector obtained by stacking the gradients of the components $V_i$, $i = 1, \dots, N$, of the slow-scale potential, i.e.,
\begin{equation}
\nabla V(x) = \begin{pmatrix} \nabla V_1(x)^\top & \cdots & \nabla V_N(x)^\top \end{pmatrix}^\top.
\end{equation}
Then, the drift estimator for the block matrix $A \in \R^{dN \times d}$ given by
\begin{equation}
A = \begin{pmatrix} A_1^\top & \cdots & A_N^\top \end{pmatrix}^\top,
\end{equation}
is the solution of the linear system
\begin{equation}
-M_{\mathrm{ma}}^\delta(X^\epl, T) \widehat A_{\mathrm{ma}}^\delta(X^\epl, T) = v_{\mathrm{ma}}^\delta(X^\epl, T),
\end{equation}
where the matrices $M_{\mathrm{ma}}^\delta(X^\epl, T) \in \R^{dN \times dN}$ and $v_{\mathrm{ma}}^\delta(X^\epl, T) \in \R^{dN \times d}$ are defined as
\begin{equation}
\begin{aligned}
M_{\mathrm{ma}}^\delta(X^\epl, T) &\defeq \frac1T \int_0^T \nabla V(Z_{\mathrm{ma}}^{\delta,\epl}(t)) \otimes \nabla V(X^\epl(t)) \dd t, \\ v_{\mathrm{ma}}^\delta(X^\epl, T) &\defeq \frac1T \int_0^T \nabla V(Z_{\mathrm{ma}}^{\delta,\epl}(t)) \otimes \dd X^\epl(t).
\end{aligned}
\end{equation}
Concerning the diffusion coefficient, it is natural to impose that it is symmetric and positive definite, so that the square root $\sqrt{2\Sigma}$ is well defined. For this reason, we define the estimator $\widehat \Sigma_{\mathrm{ma}}^\delta(X^\epl, T)$ for the effective diffusion coefficient $\Sigma \in \R^{d \times d}$ as
\begin{equation}
\widehat \Sigma_{\mathrm{ma}}^\delta(X^\epl, T) = \mathcal S \left( \frac1{\delta T} \int_\delta^T \left(X^\epl(t) - Z_{\mathrm{ma}}^{\delta,\epl}(t)\right) \otimes \left(X^\epl(t) - X^\epl(t-\delta)\right) \dd t \right),
\end{equation}
where $\mathcal S(B)$ denotes the symmetric part of a matrix $B \in \R^{d \times d}$, i.e., $\mathcal S(B) = (B + B^\top)/2$. Remark that positive definiteness is not guaranteed for $\widehat \Sigma_{\mathrm{ma}}^\delta(X^\epl, T)$, but due to asymptotic unbiasedness it is natural to expect that for $T$ large enough and $\epl$ small enough the estimator $\widehat \Sigma_{\mathrm{ma}}^\delta(X^\epl, T)$ is positive definite as its limit. For the estimators of the form $\widetilde \Sigma(X^\epl,T)$, we first estimate the homogenization matrix as
\begin{equation}
\widehat{\mathcal K} = \argmin_{\mathcal K \in \mathrm{Sym}^+_d} \norm{\widehat A_{\mathrm{MLE}}(X^\epl, T) \mathcal K - \widehat A(X^\epl, T)}_F,
\end{equation}
where $\mathrm{Sym}^+_d$ is the space of symmetric positive definite matrices of size $d\times d$, $\norm{\cdot}_F$ is the Frobenius norm, and 
$\widehat A(X^\epl,T)$ is any estimator of the effective drift coefficient $A \in \R^{dN \times d}$. It is simple to compute the minimum in practice by imposing $\mathcal K = LL^\top$, and then minimizing directly over $L \in \R^{d\times d}$. Then, we define
\begin{equation}
\widetilde \Sigma(X^\epl,T) = \frac{\langle X^\epl \rangle_T}{2T} \widehat{\mathcal K}.
\end{equation}
In this case, the estimator is symmetric and positive definite by construction.

As we present in the numerical experiment of \cref{sec:NumExp_2d}, our methodology based on moving average can be naturally and successfully applied to higher-dimensional SDEs.

\section{Numerical Experiments} \label{sec:NumExp}

In this section, we present a series of numerical experiments which have the twofold goal of validating our theoretical analysis, and of showcasing the effectiveness of our technique on challenging academic examples.

\subsection{Sensitivity Analysis with a Ornstein--Uhlenbeck Model} \label{sec:NumExp_OU}

\begin{figure}[t!]
	\centering
	\hspace{2cm}\includegraphics[]{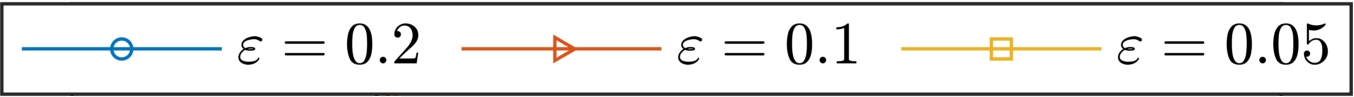}\vspace{0.25cm}
	\begin{tabular}{m{1.3cm}m{4cm}m{4cm}m{4cm}}		
		\vspace{-1.2cm}$\sigma=1$ & \includegraphics[]{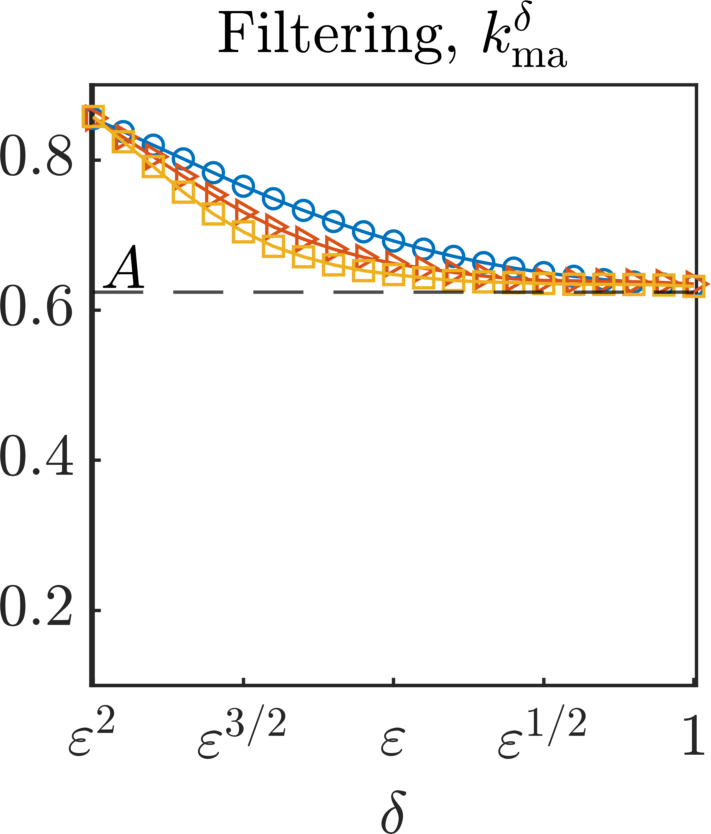} & \includegraphics[]{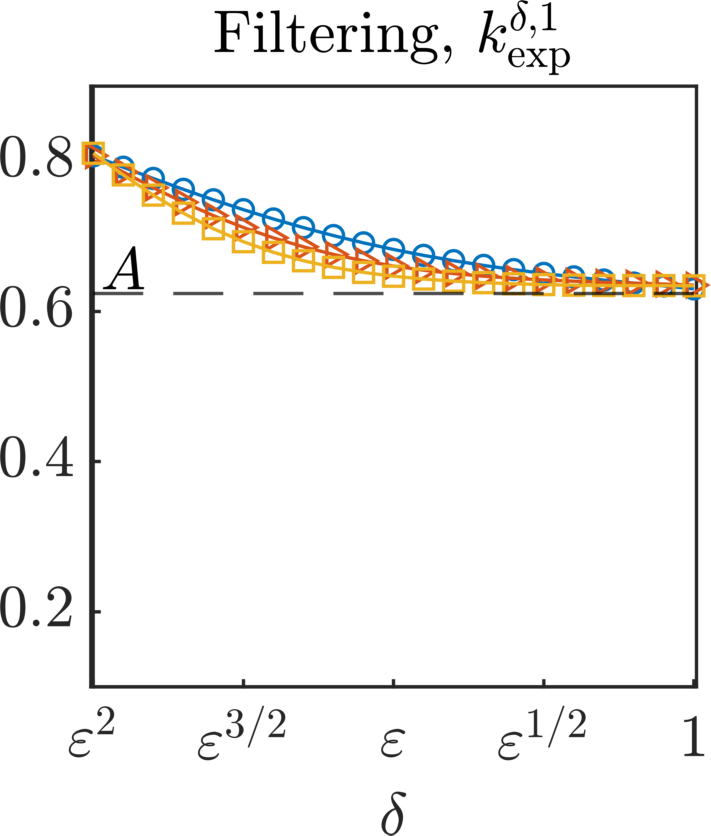}  & \includegraphics[]{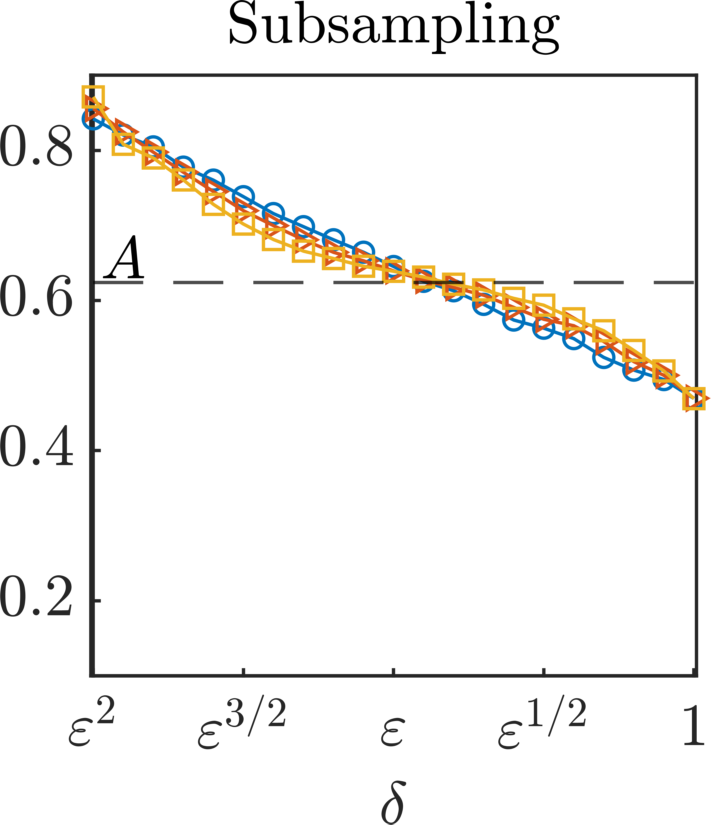} \\ 
		\vspace{-1.2cm}$\sigma=0.75$ & \includegraphics[]{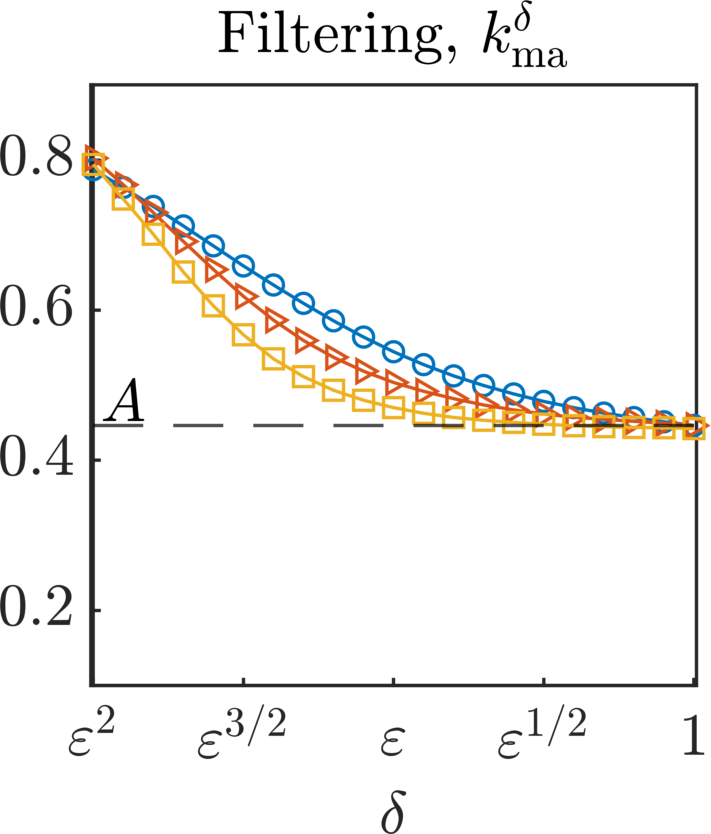} & \includegraphics[]{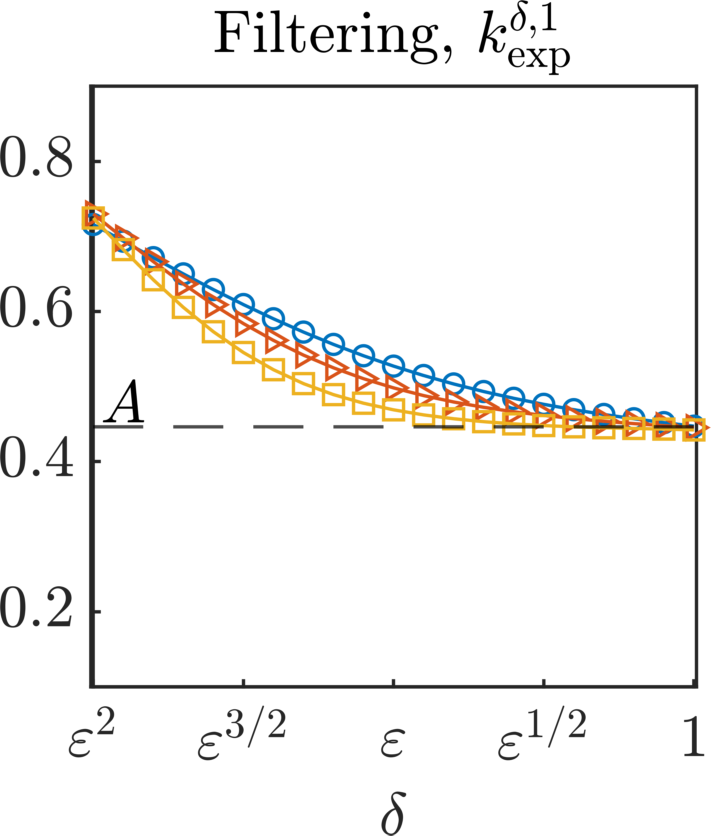}  & \includegraphics[]{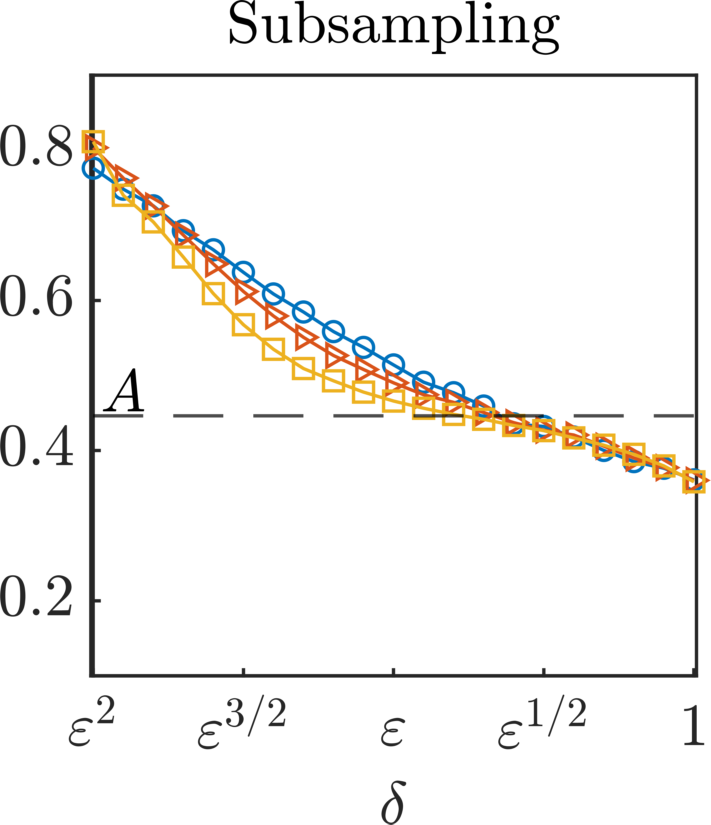} \\ 
		\vspace{-1.2cm}$\sigma=0.5$ & \includegraphics[]{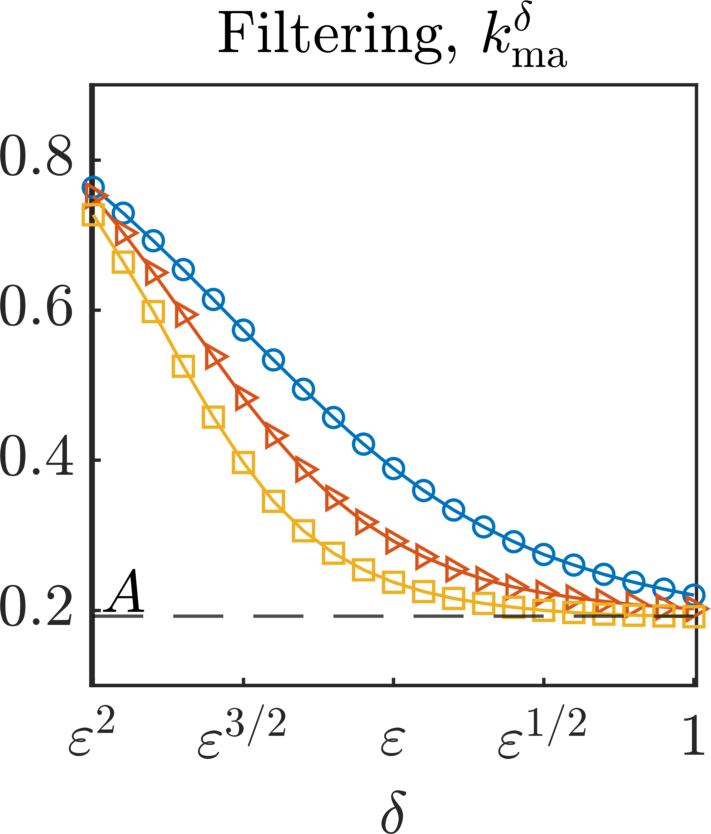} & \includegraphics[]{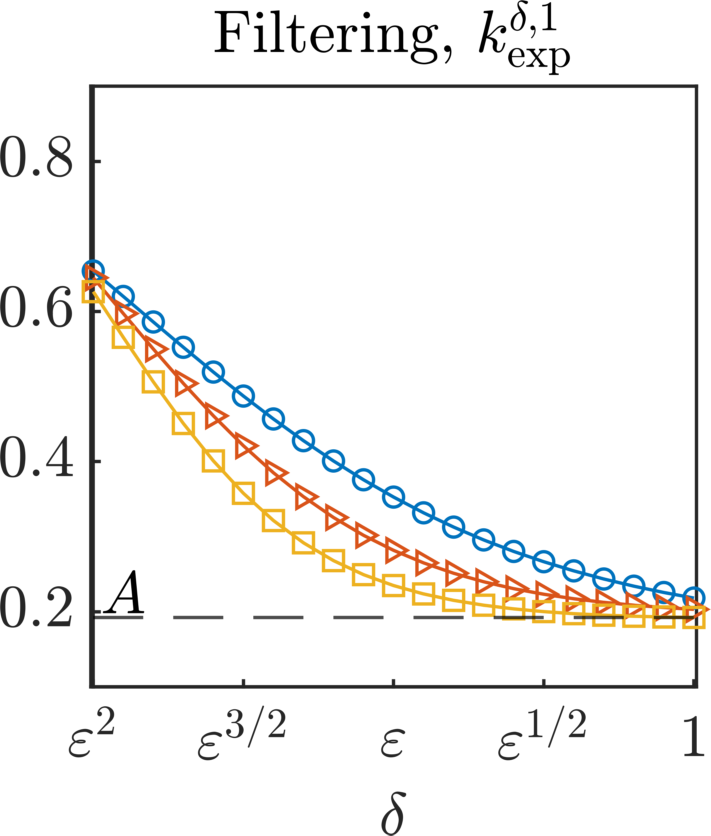}  & \includegraphics[]{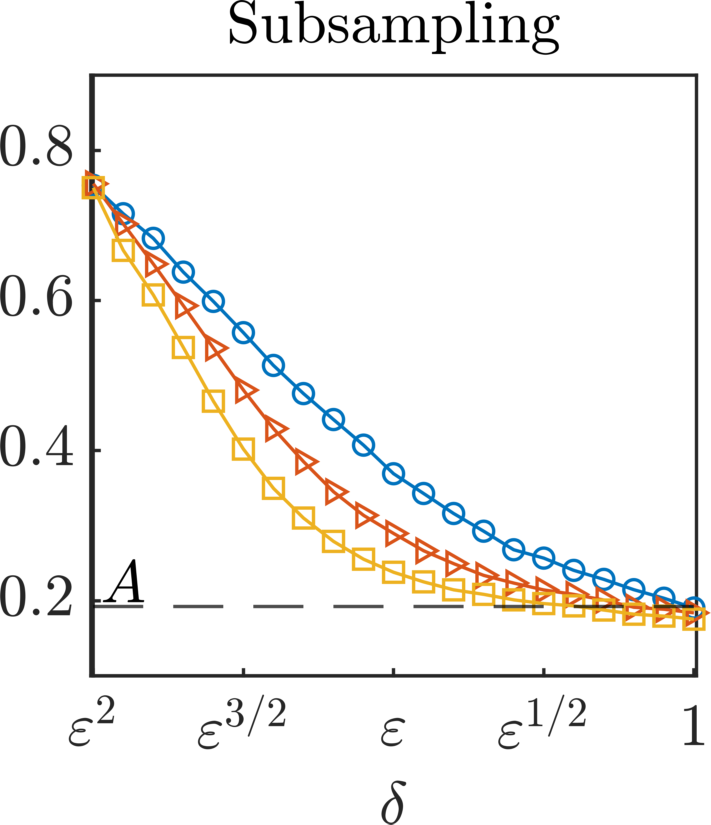} \\ 
	\end{tabular}
	
	\vspace{0.3cm}
	\includegraphics[]{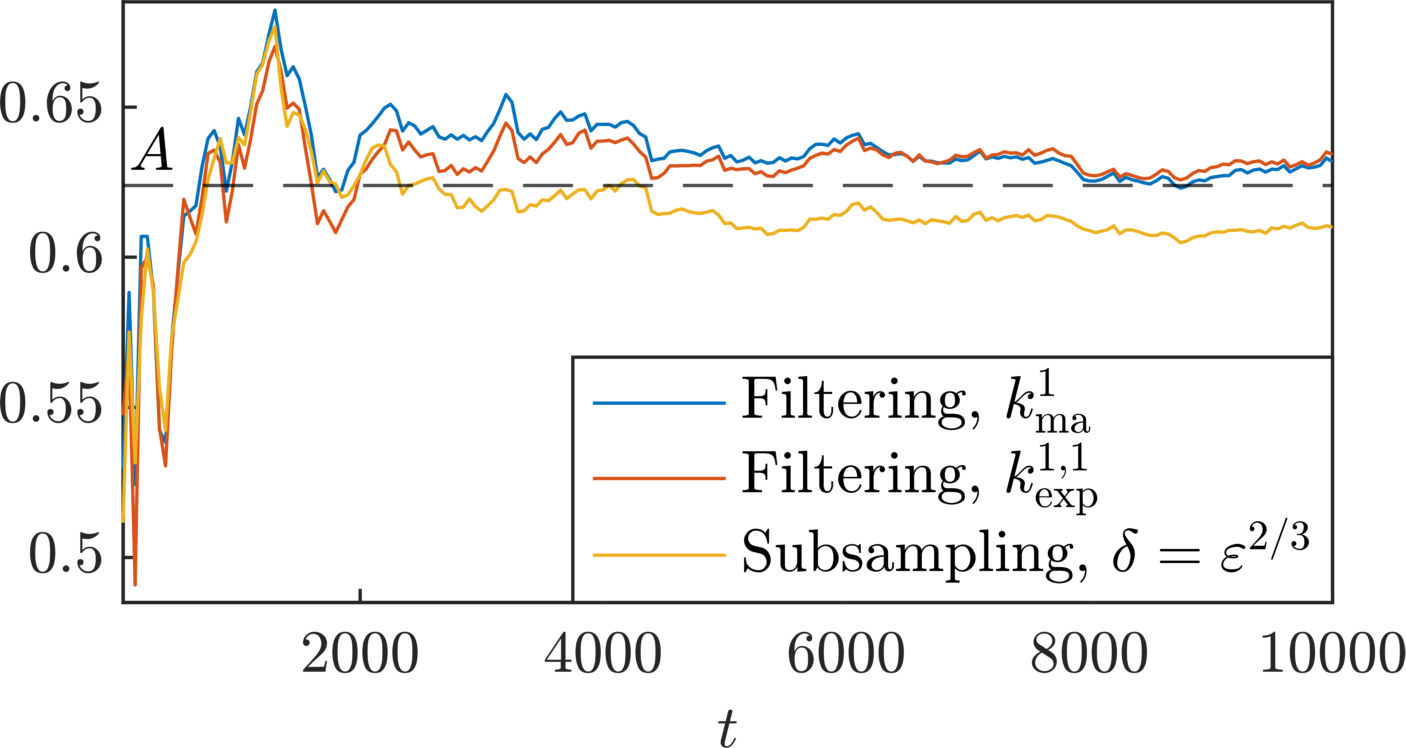}
	\caption{Estimation of the drift coefficient of an effective Ornstein--Uhlenbeck equation. Top: Numerical results at final time $T = 10^4$ for variable $\sigma = 1, 0.75, 0.5$ (per row), for $\epl = 0.2, 0.1, 0.05$ (blue, red, and yellow lines respectively), and for variable filtering/subsampling width $\delta$ (horizontal axis in all figures). Comparison between filtering with moving average and exponential filters (first two columns) and subsampling (last column). Bottom: Convergence with respect to $t \in [0, 10^4]$ of the two estimators based on filtered data with $\delta = 1$, and of the subsampling estimator with $\delta = \epl^{2/3}$, for fixed $\sigma = 1$ and $\epl = 0.05$. Remark: The legend on top is valid for all plots, except the last row.}
	\label{fig:Drift}
\end{figure}
\begin{figure}[t!]
	\centering
	\hspace{2cm}\includegraphics[]{Figures/legend_epsilon}\vspace{0.25cm}
	\begin{tabular}{m{1.3cm}m{4cm}m{4cm}m{4cm}}		
		\vspace{-1.2cm}$\sigma=1$ & \includegraphics[]{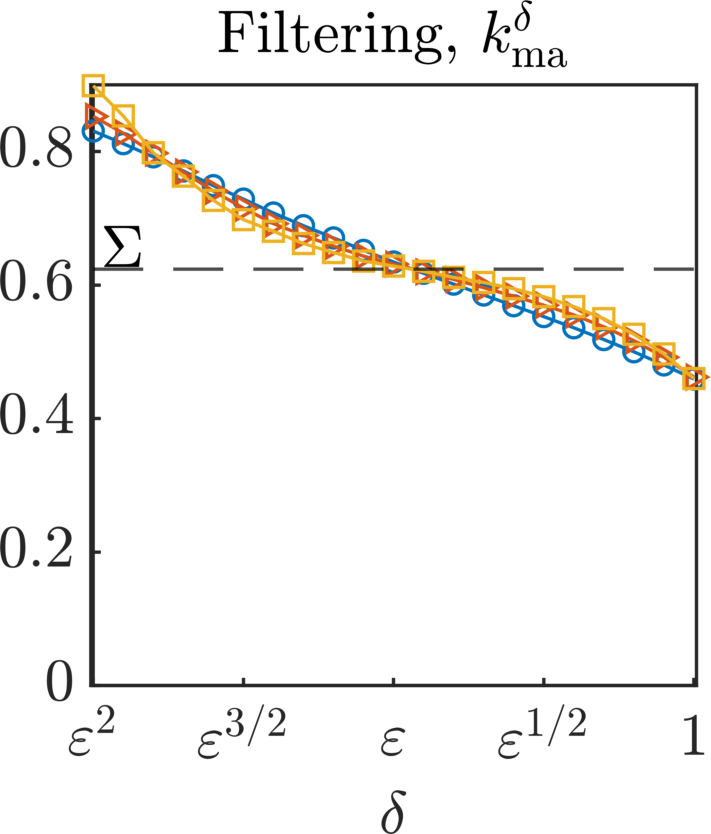} & \includegraphics[]{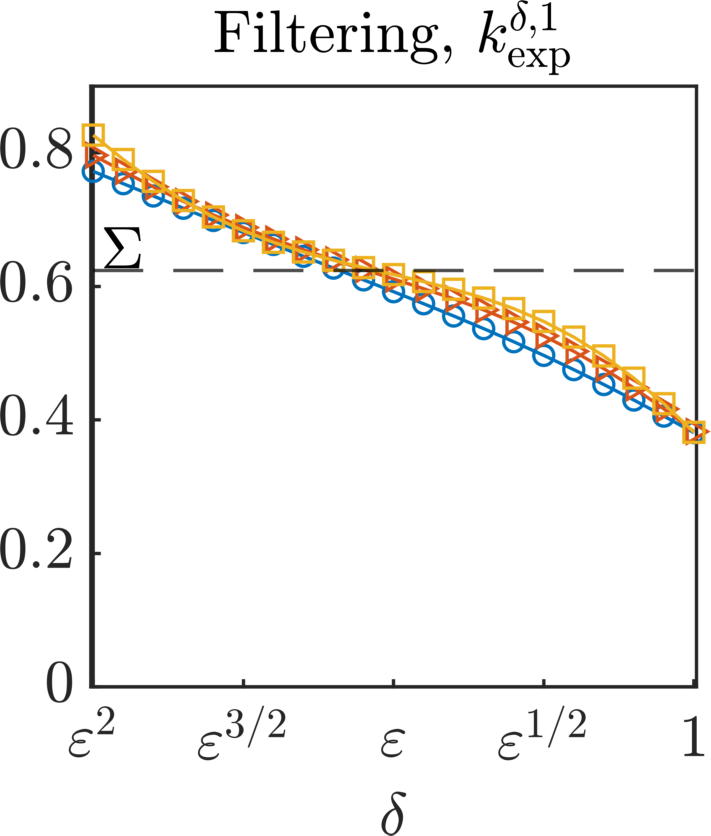}  & \includegraphics[]{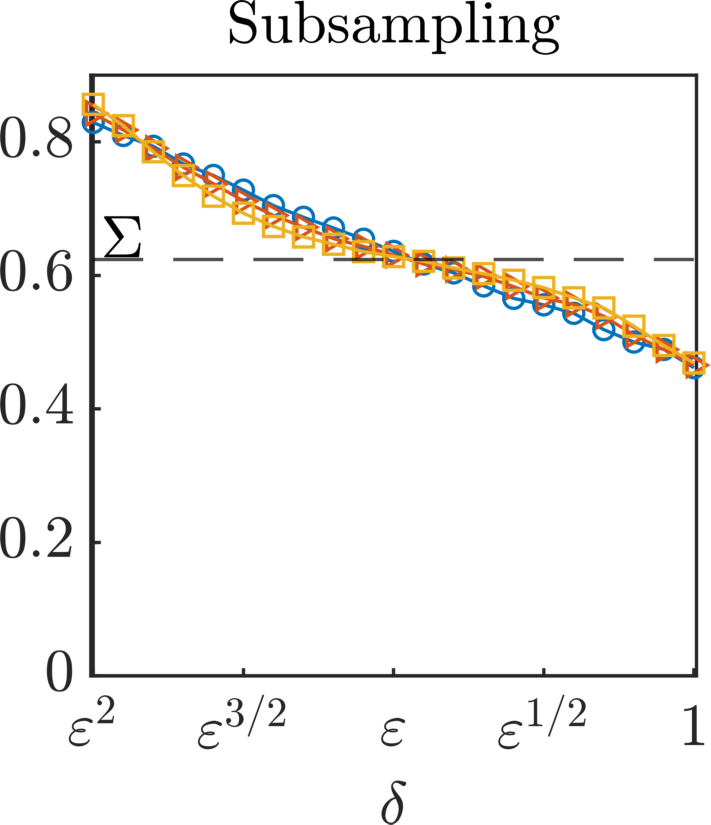} \\ 
		\vspace{-1.2cm}$\sigma=0.75$ & \includegraphics[]{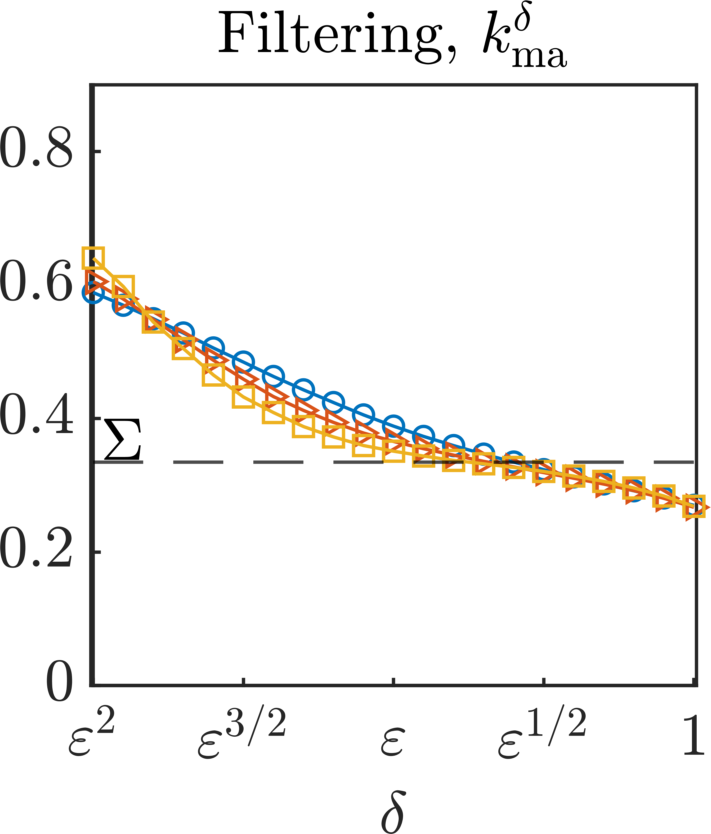} & \includegraphics[]{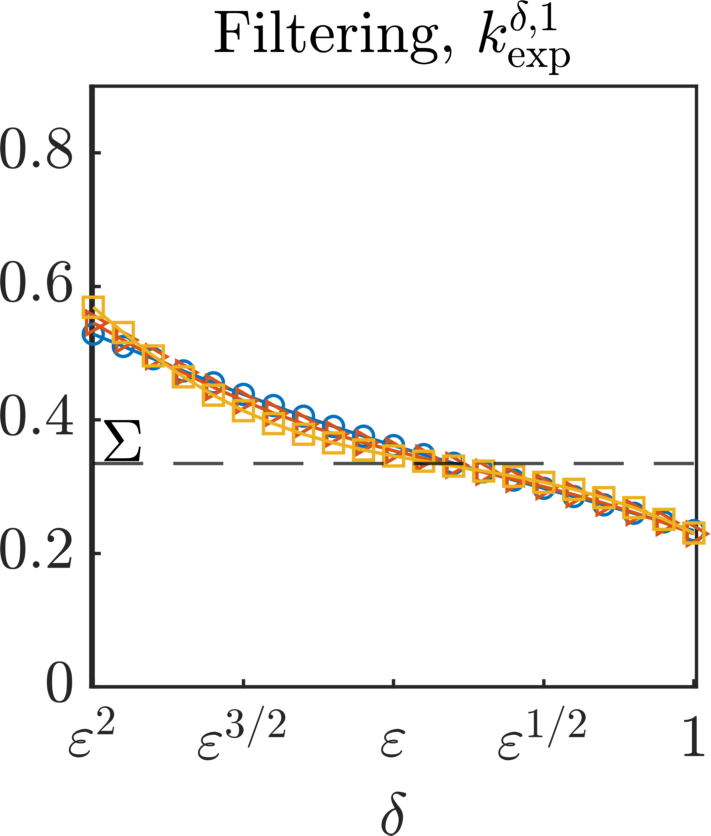}  & \includegraphics[]{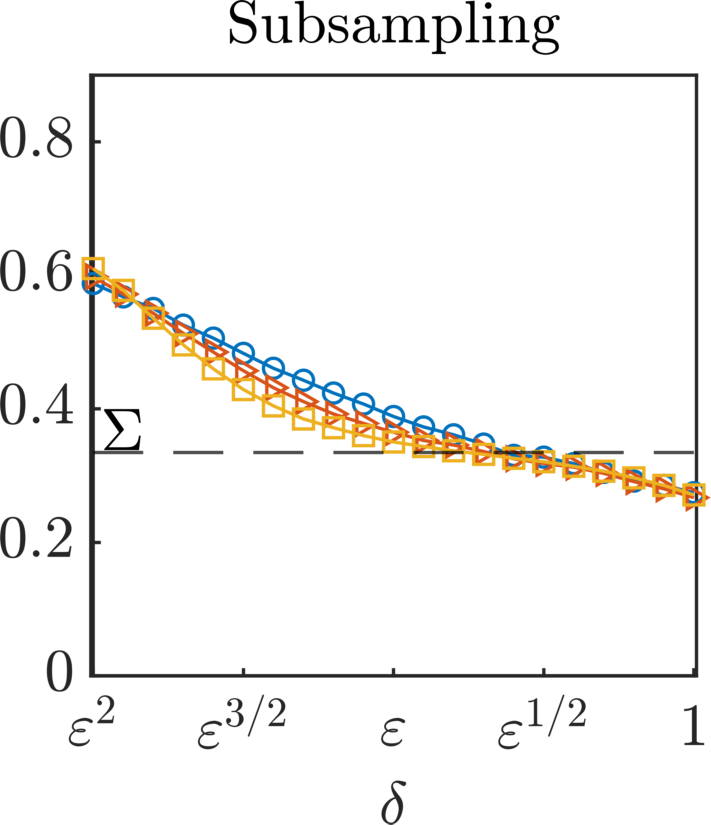} \\ 
		\vspace{-1.2cm}$\sigma=0.5$ & \includegraphics[]{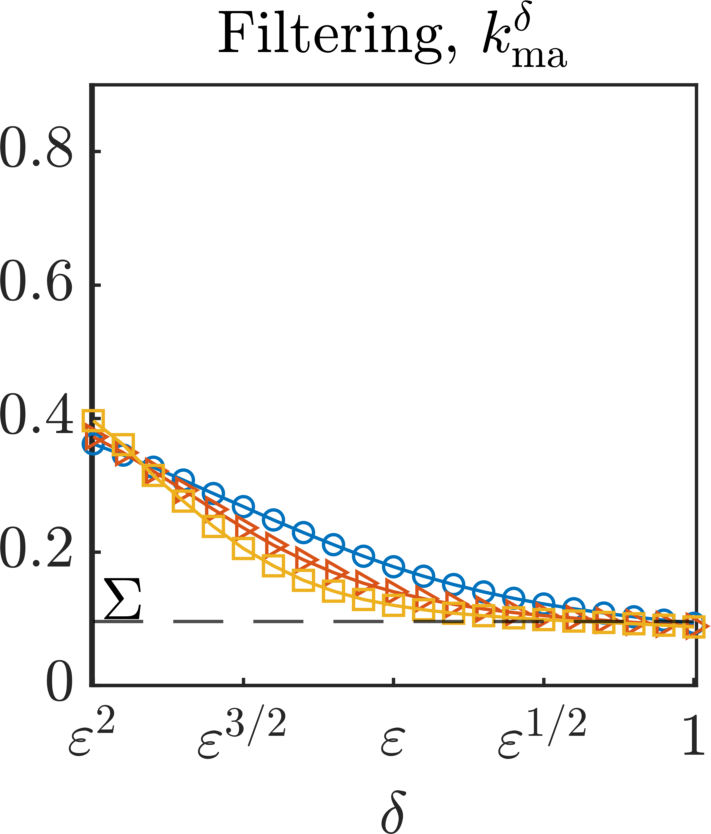} & \includegraphics[]{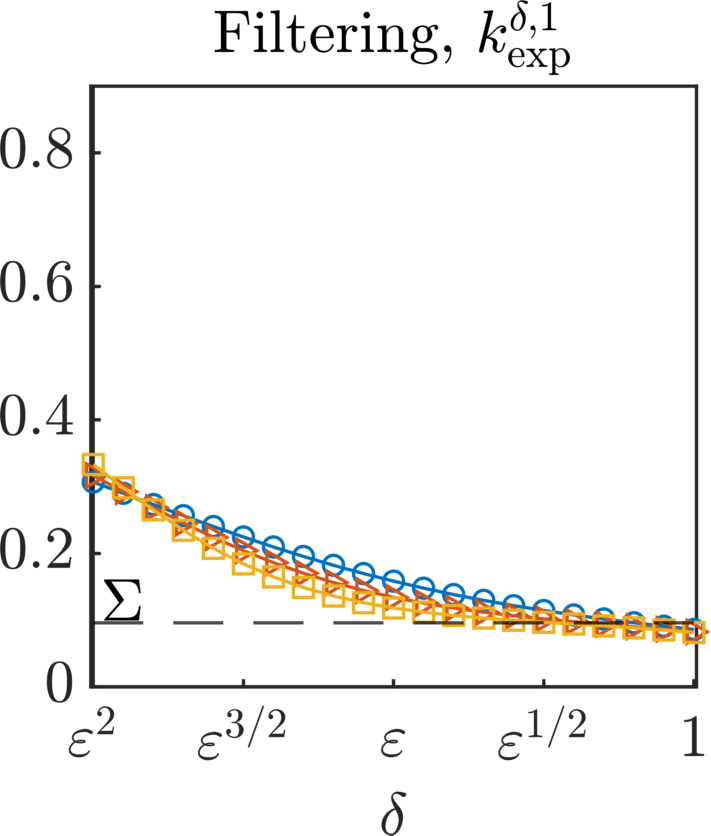}  & \includegraphics[]{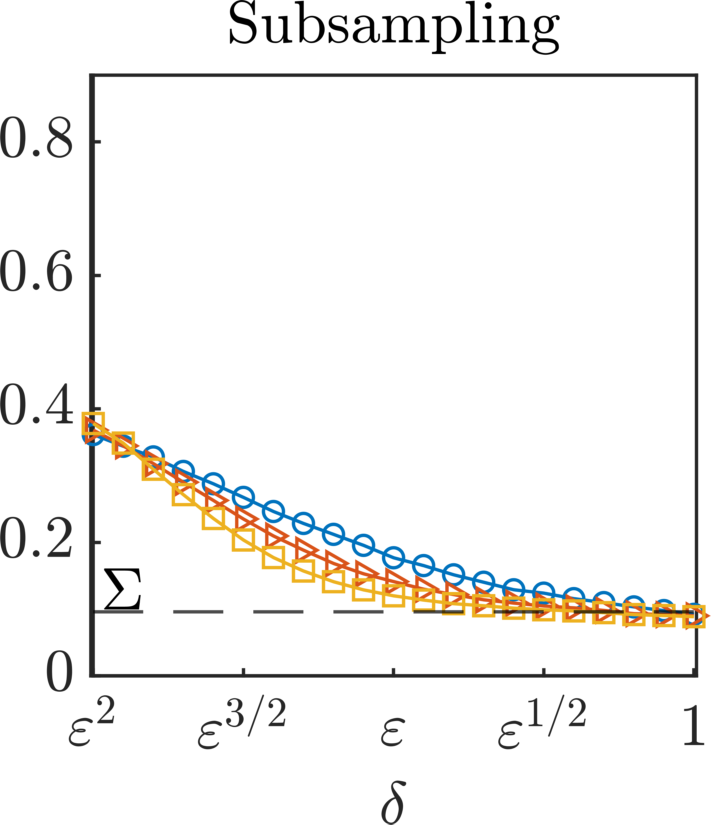} \\ 
	\end{tabular}
	
	\vspace{0.25cm}
	\includegraphics[]{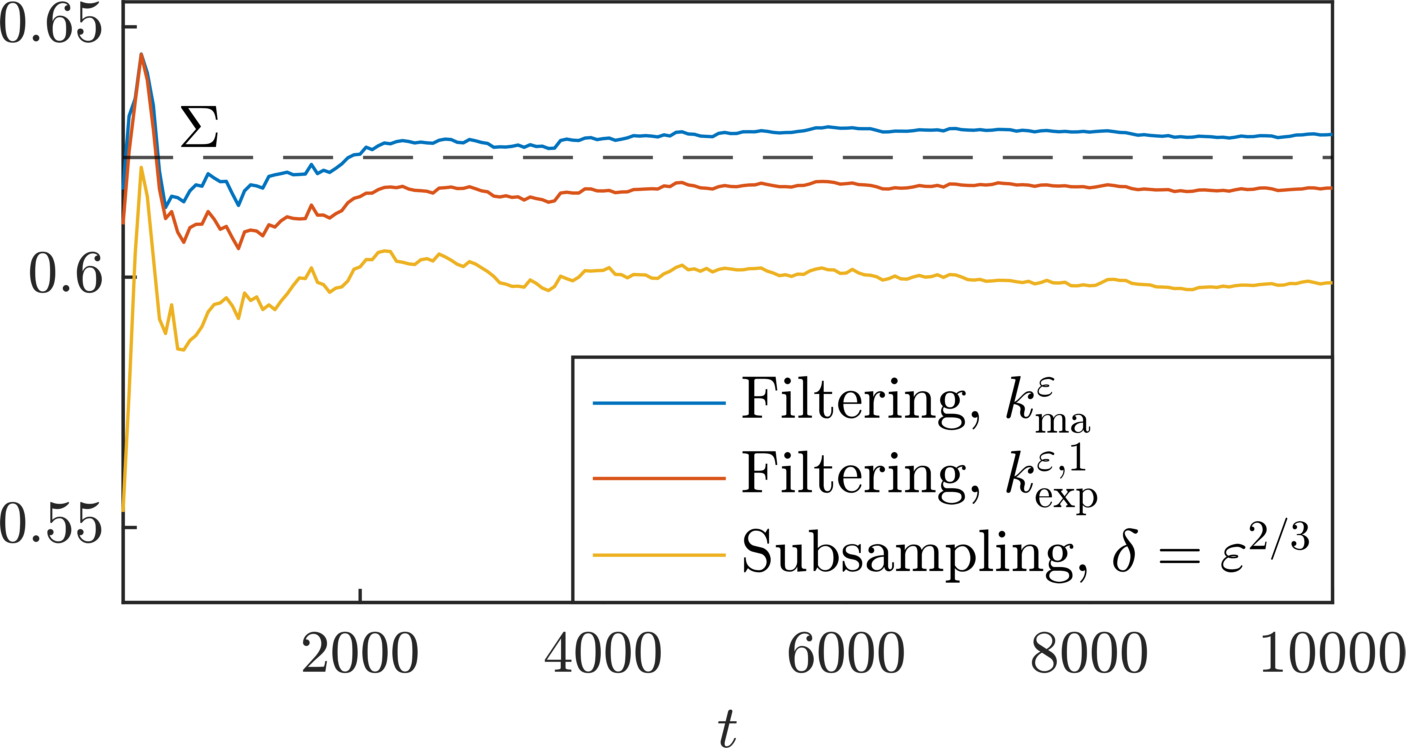}
	\caption{Estimation of the diffusion coefficient of an effective Ornstein--Uhlenbeck equation. Top: Numerical results at final time $T = 10^4$ for variable $\sigma = 1, 0.75, 0.5$ (per row), for $\epl = 0.2, 0.1, 0.05$ (blue, red, and yellow lines respectively), and for variable filtering/subsampling width $\delta$ (horizontal axis in all figures). Results for the estimators $\widehat \Sigma$: Comparison between filtering with moving average and exponential filters (first two columns) and subsampling (last column). Bottom: Convergence with respect to $t \in [0, 10^4]$ of the two estimators based on filtered data with $\delta = \epl$, and of the subsampling estimator with $\delta = \epl^{2/3}$, for fixed $\sigma = 1$ and $\epl = 0.05$. Remark: The legend on top is valid for all plots, except the last row.}
	\label{fig:Diffusion}
\end{figure}
\begin{figure}[t!]
	\centering
	\hspace{2cm}\includegraphics[]{Figures/legend_epsilon}\vspace{0.25cm}
	\begin{tabular}{m{1.3cm}m{4cm}m{4cm}m{4cm}}		
		\vspace{-1.2cm}$\sigma=1$ & \includegraphics[]{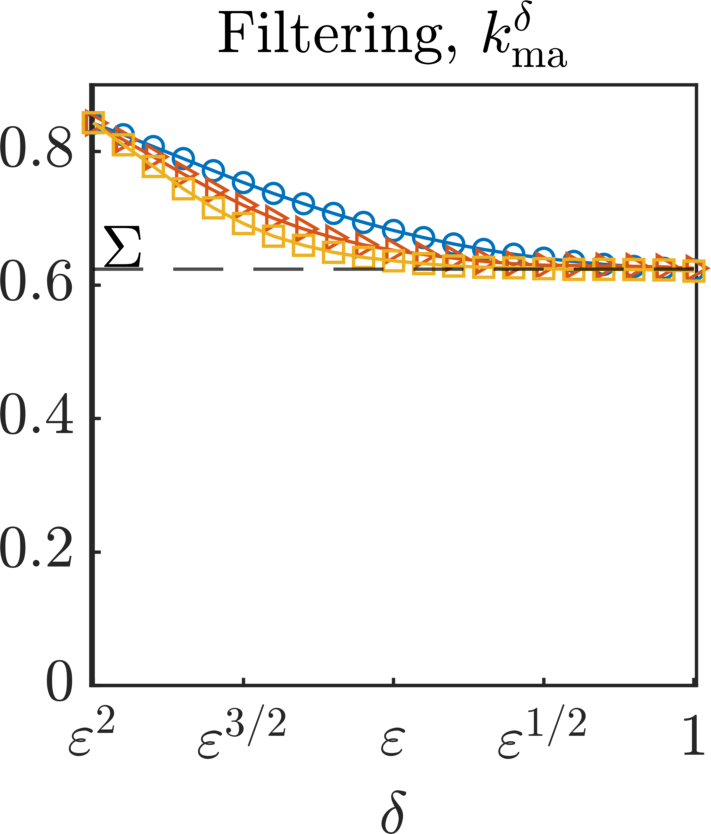} & \includegraphics[]{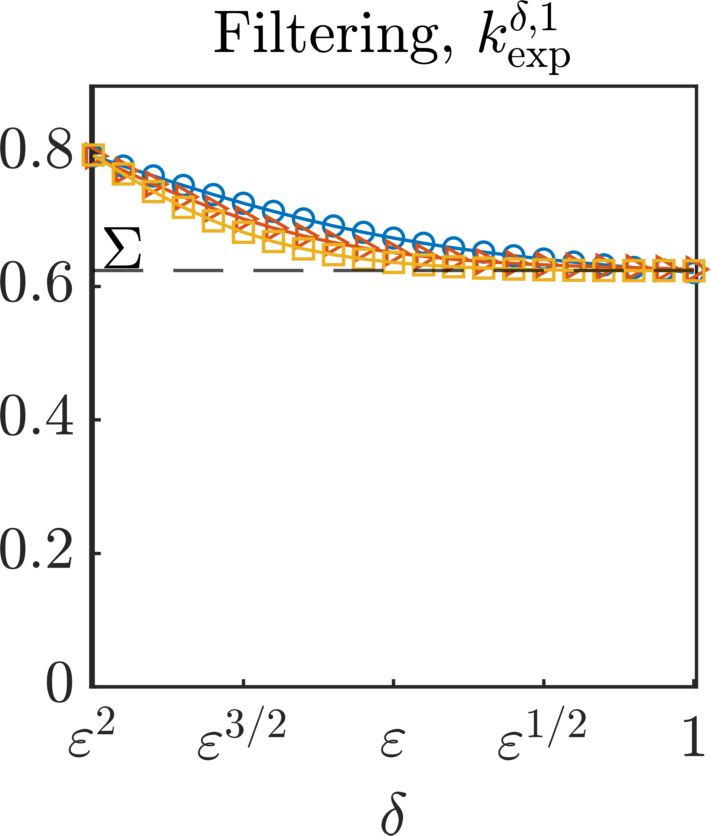}  & \includegraphics[]{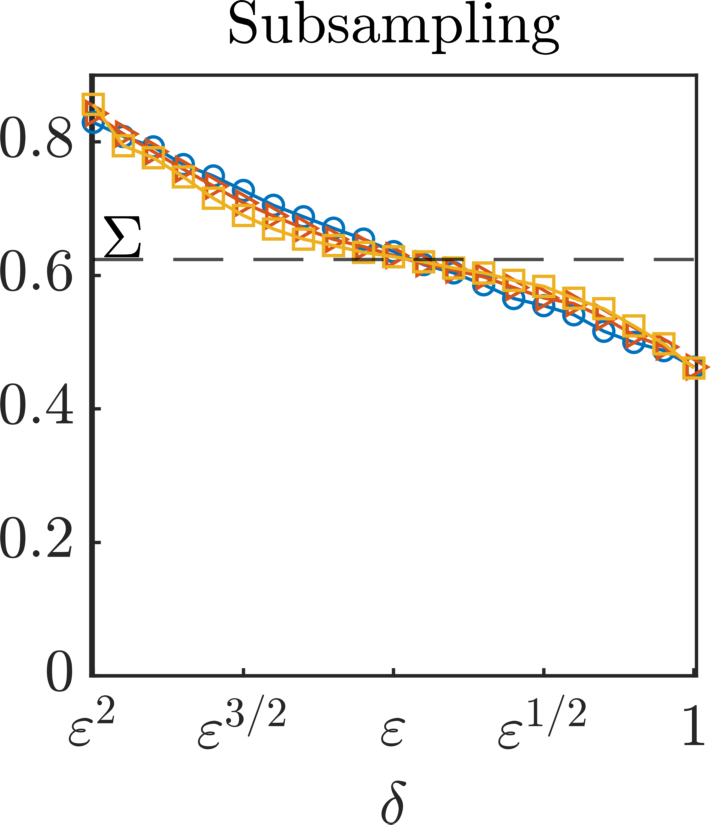} \\ 
		\vspace{-1.2cm}$\sigma=0.75$ & \includegraphics[]{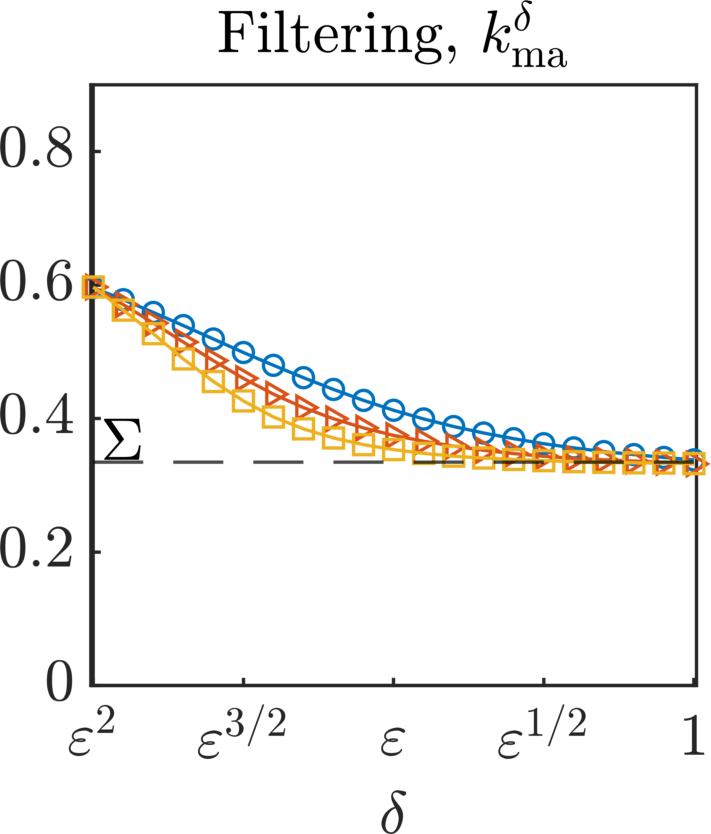} & \includegraphics[]{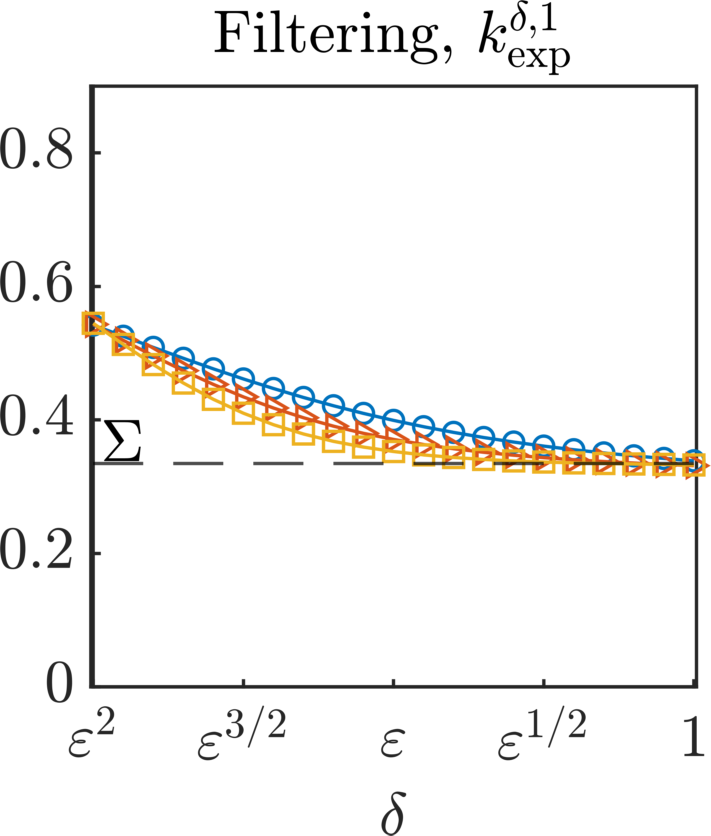}  & \includegraphics[]{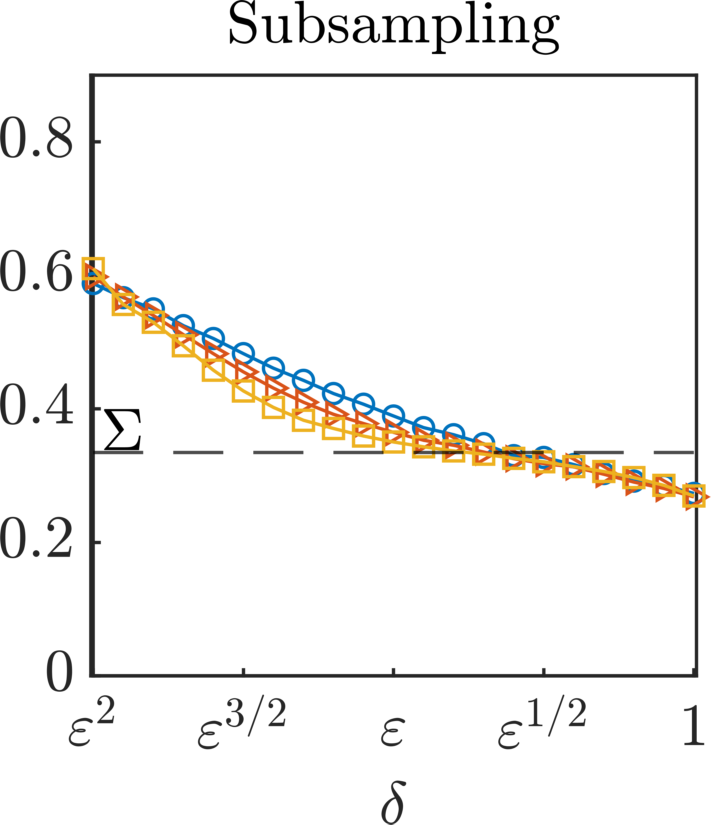} \\ 
		\vspace{-1.2cm}$\sigma=0.5$ & \includegraphics[]{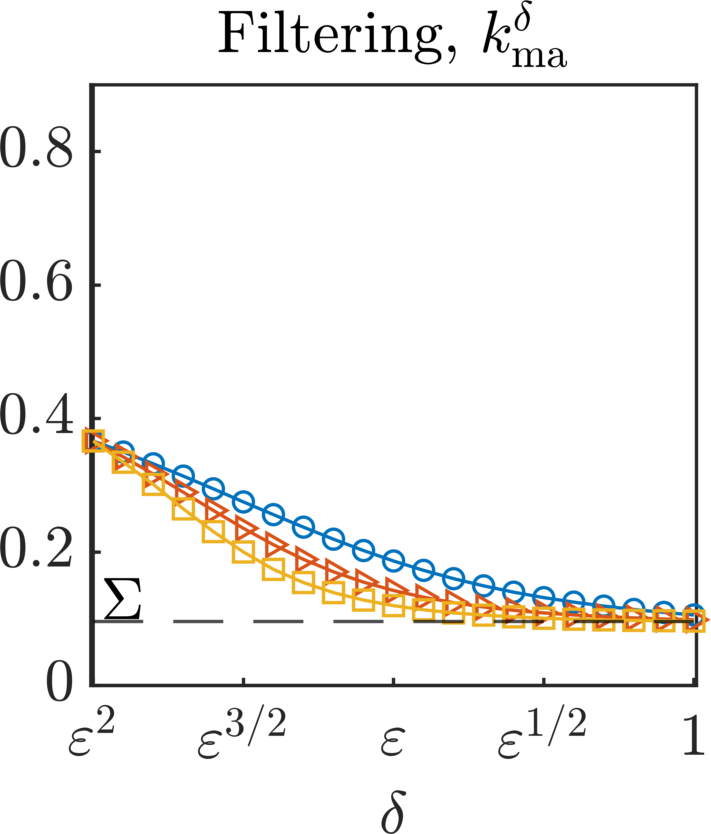} & \includegraphics[]{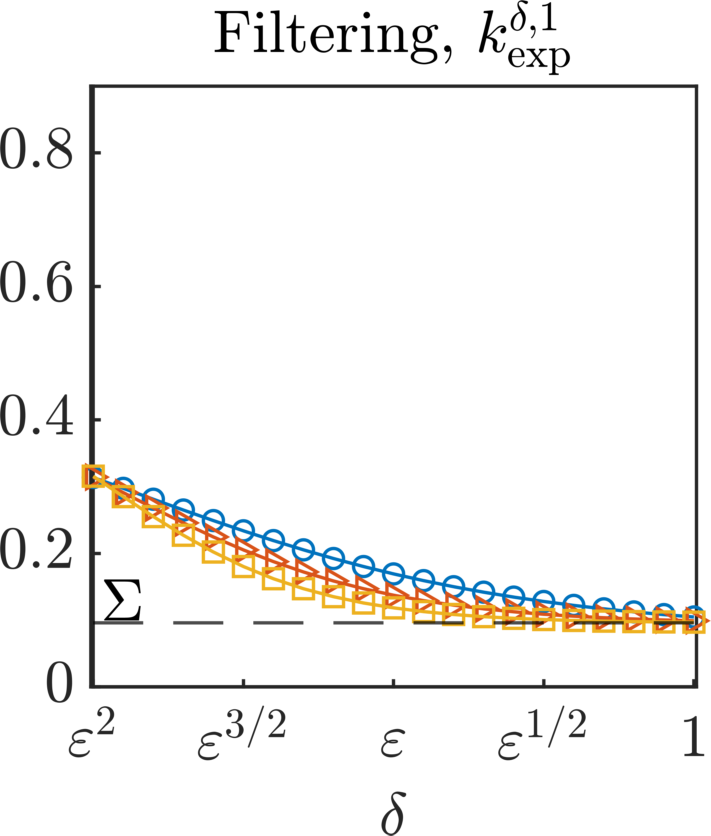}  & \includegraphics[]{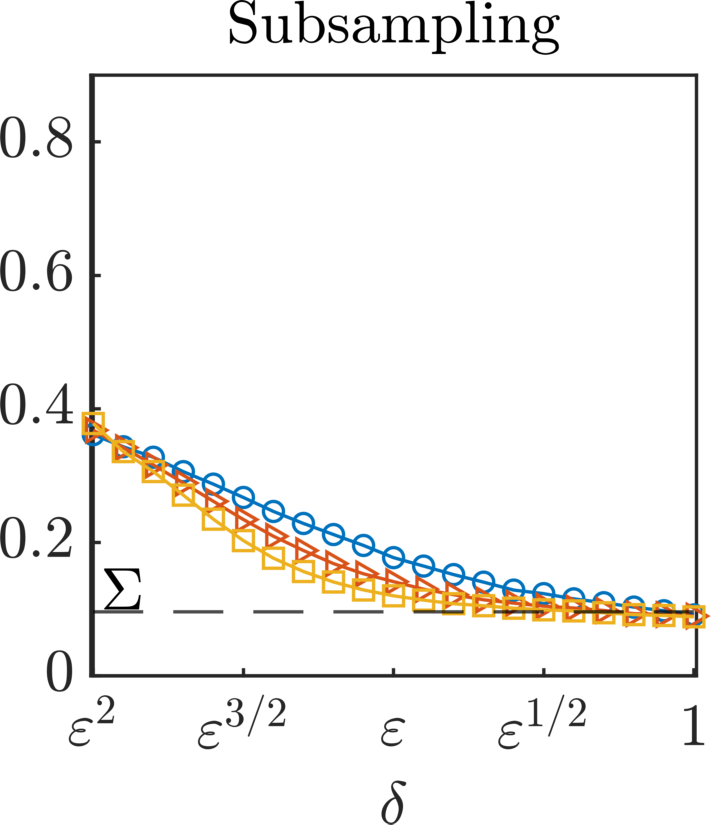} \\ 
	\end{tabular}
	
	\vspace{0.25cm}
	\includegraphics[]{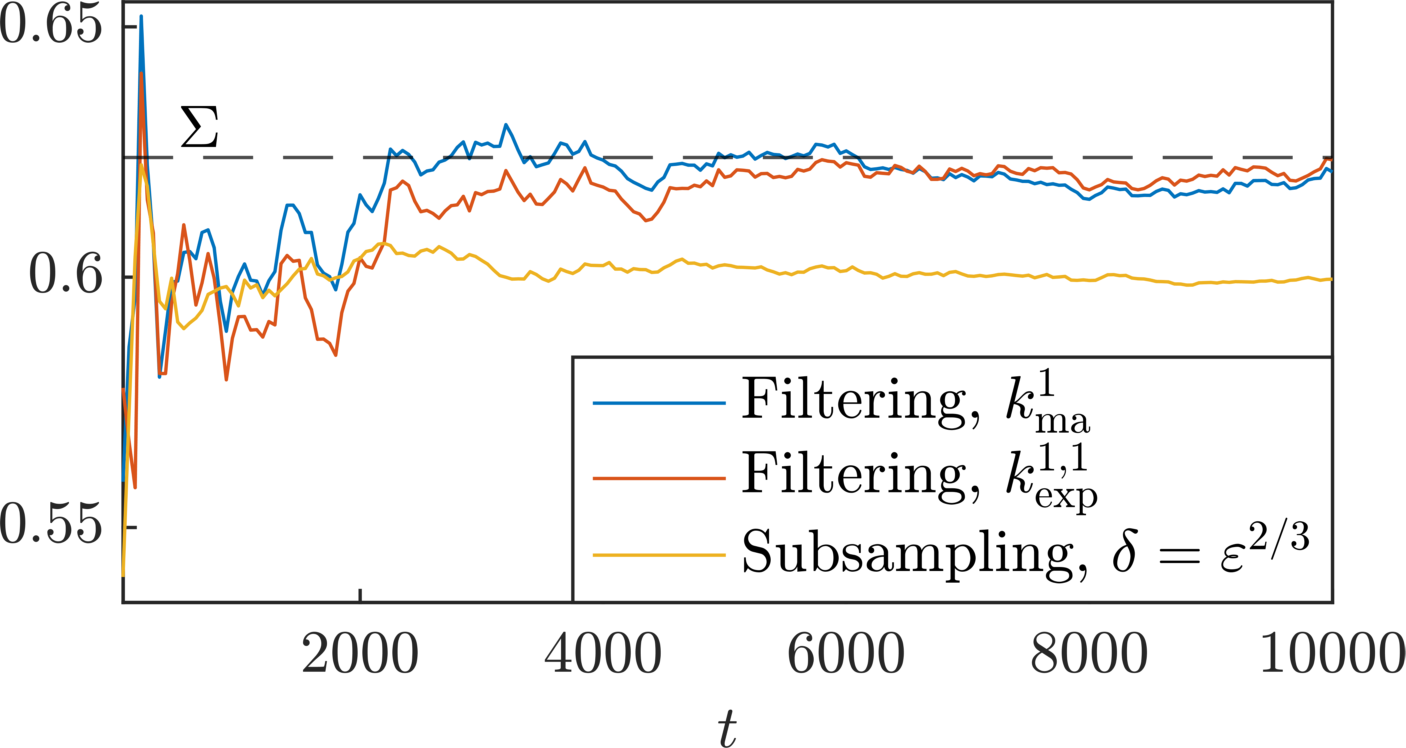}
	\caption{Estimation of the diffusion coefficient of an effective Ornstein--Uhlenbeck equation. Top: Numerical results at final time $T = 10^4$ for variable $\sigma = 1, 0.75, 0.5$ (per row), for $\epl = 0.2, 0.1, 0.05$ (blue, red, and yellow lines respectively), and for variable filtering/subsampling width $\delta$ (horizontal axis in all figures). Results for the estimators $\widetilde \Sigma$: Comparison between filtering with moving average and exponential filters (first two columns) and subsampling (last column). Bottom: Convergence with respect to $t \in [0, 10^4]$ of the two estimators based on filtered data with $\delta = 1$, and of the subsampling estimator with $\delta = \epl^{2/3}$, for fixed $\sigma = 1$ and $\epl = 0.05$. Remark: The legend on top is valid for all plots, except the last row.}	\label{fig:DiffusionNew}
\end{figure}

We first consider the one-dimensional equation \eqref{eq:SDE_MS_Lang_d1} with $N = 1$, with the slow scale potential $V(x) = x^2/2$, and with the fluctuating potential $p(y) = \sin(y)$. In this case, the effective model is an Ornstein--Uhlenbeck equation, and it is simple to verify that the homogenization coefficient $\mathcal K > 0$ is given by
\begin{equation}\label{eq:K1d}
\mathcal K = \frac{L^2}{C_\eta^+ C_\eta^-}, \qquad C_\eta^{\pm} = \int_0^L \exp\left(\pm \frac{p(y)}{\sigma}\right) \dd y,
\end{equation}
where $L = 2\pi$ is the period of $p$. In this scenario, we can compute exact values for the effective drift and diffusion coefficients to assess the accuracy of our inference method. We then compare numerically the accuracy of the estimators 
\begin{enumerate}
	\item\label{it:DriftOU} $\widehat A_{\mathrm{ma}}^\delta$, $\widehat A_{\mathrm{exp}}^{\delta,1}$, and $\widehat A_{\mathrm{sub}}^{\delta}$ of the effective drift coefficient obtained by employing data preprocessed with the moving average filter of width $\delta$, the exponential filter with $\delta$ and shape parameter $\beta = 1$, and subsampling with period $\delta$,
	\item $\widehat \Sigma_{\mathrm{ma}}^\delta$, $\widehat \Sigma_{\mathrm{exp}}^{\delta,1}$, and $\widehat \Sigma_{\mathrm{sub}}^{\delta}$ of the effective diffusion coefficient obtained with the same methods, respectively,
	\item $\widetilde \Sigma_{\mathrm{ma}}^\delta$, $\widetilde \Sigma_{\mathrm{exp}}^{\delta,1}$, and $\widetilde \Sigma_{\mathrm{sub}}^{\delta}$ of the effective diffusion coefficient based on the corresponding drift estimators given in \ref{it:DriftOU}.
\end{enumerate}
We consider $T = 10^4$ and generate data $X^\epl = (X^\epl(t), 0 \leq t \leq T)$ from the  multiscale model with drift coefficient $\alpha = 1$ and for a variable diffusion coefficient $\sigma = 0.5, 0.75, 1$, so that the homogenization coefficient $\mathcal K \approx 0.19, 0.45, 0.62$ respectively. Moreover, we consider scale-separation parameters $\epl = 0.2, 0.1, 0.05$ to observe convergence with respect to the homogenization limit. Data is generated with the Euler--Maruyama method with fixed time step $\Delta_t = \epl_{\min}^3$, where $\epl_{\min} = 0.05$ is the smallest value we employ for the scale-separation parameter. With this choice, we have the twofold advantage of introducing negligible numerical errors which do not compromise the validity of our results, and of capturing well the fast-scale oscillations. The filtering/subsampling widths are set to $\delta = \epl^\zeta$ for $\zeta = i / 10$ for $i = 0,1,\ldots,20$ to observe robustness with respect to preprocessing. Let us remark that subsampling-based estimators are asymptotically unbiased only for $\zeta \in (0, 1)$, and that the theory for both filtering kernels is different in case $\zeta = 0$, i.e., when the filtering width is independent of $\epl$.

Numerical results, given in \cref{fig:Drift,fig:Diffusion,fig:DiffusionNew}, demonstrate that
\begin{enumerate}[label=(\alph*)]
	\item \cref{fig:Drift}: The two filtering techniques yield estimators of the drift coefficient of comparable accuracy across all parameters $\sigma$, $\epl$ and $\delta$, and they are both more robust than subsampling when varying the parameters $\sigma$ and $\delta$. We observe that robustness with respect to $\delta$ is particularly improved for higher values of $\sigma$. For the two filtering methodologies asymptotic unbiasedness with respect to $\epl$ seems to hold in practice. Finally, convergence with respect to $t \in [0,T]$, verified with $\sigma = 1$, $\delta = 1$ for the filtering methods and $\delta = \epl^{2/3}$ (conjectured optimal in \cite{PaS07}) for subsampling is similar for the three methods. 
	\item\label{it:diff} \cref{fig:Diffusion}: The estimators $\widehat \Sigma_{\mathrm{ma}}^\delta$, $\widehat \Sigma_{\mathrm{exp}}^{\delta,1}$, and $\widehat \Sigma_{\mathrm{sub}}^{\delta}$ of $\Sigma$ have similar accuracy across all values of $\epl$, $\delta$, and $\sigma$. Again, convergence with respect to $\epl$ seems to be in practice respected. Let us remark that for these estimators $\delta = 1$ is not a viable choice (for neither the two filtering methods, nor subsampling). Convergence in time is therefore demonstrated for $\sigma =1$, for $\delta = \epl$ for the two filtering methods, and with $\delta = \epl^{2/3}$ for subsampling. With these choices, the moving average filter introduced in this paper seems to slightly outperform the concurrent methods.
	\item \cref{fig:DiffusionNew}: The estimators $\widetilde \Sigma_{\mathrm{ma}}^\delta$ and $\widetilde \Sigma_{\mathrm{exp}}^{\delta,1}$ show enhanced accuracy with respect to the corresponding estimators of \ref{it:diff} across all values of $\epl$, $\sigma$, and $\delta$. Convergence with respect to $\epl$ seems to be respected for all methods. Convergence in time is demonstrated for $\sigma =1$, for $\delta = 1$ for the two filtering methods, and with $\delta = \epl^{2/3}$ for subsampling. With these choices, the moving average filter introduced here seems to show a faster time transient towards the effective diffusion coefficient with respect to the concurrent methods. We remark that the diffusion estimators identified by the ``hat'' seem to converge faster with respect to $t$ than the ones identified with the ``tilde''.
\end{enumerate}

\subsection{The Semi-parametric Setting}\label{sec:Poly}

\begin{figure}[t!]
	\centering
	\hspace{0.5cm}\includegraphics[]{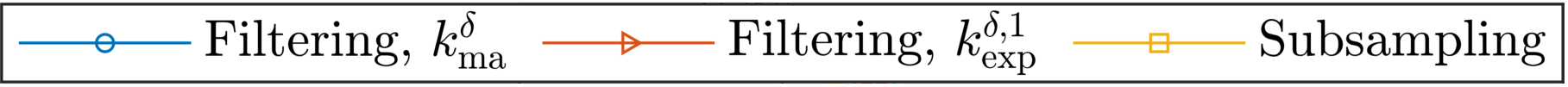}
	
	\vspace{0.25cm}
	\includegraphics[]{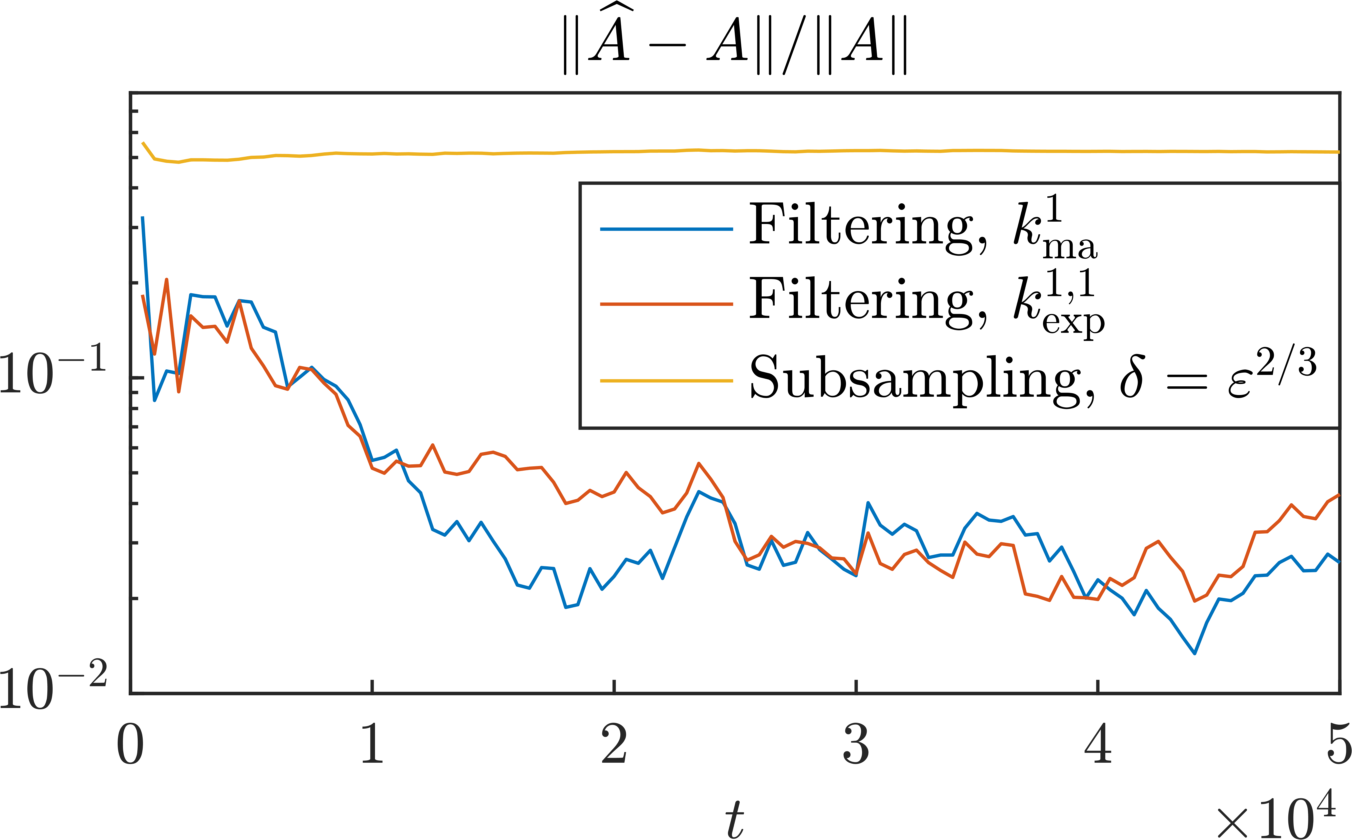}
	\includegraphics[]{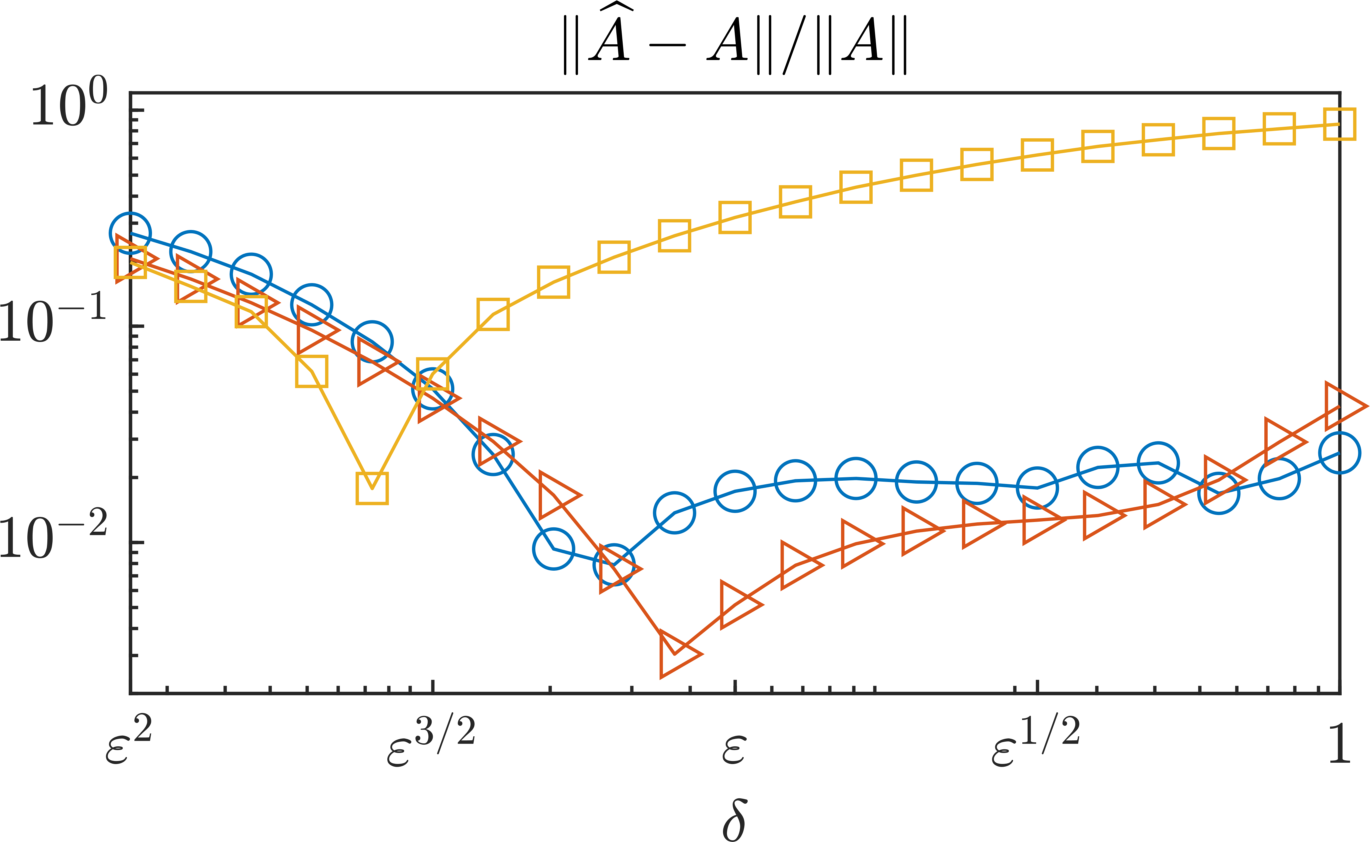}
	
	\vspace{0.25cm}
	\begin{tabular}{ccc}
		\includegraphics[]{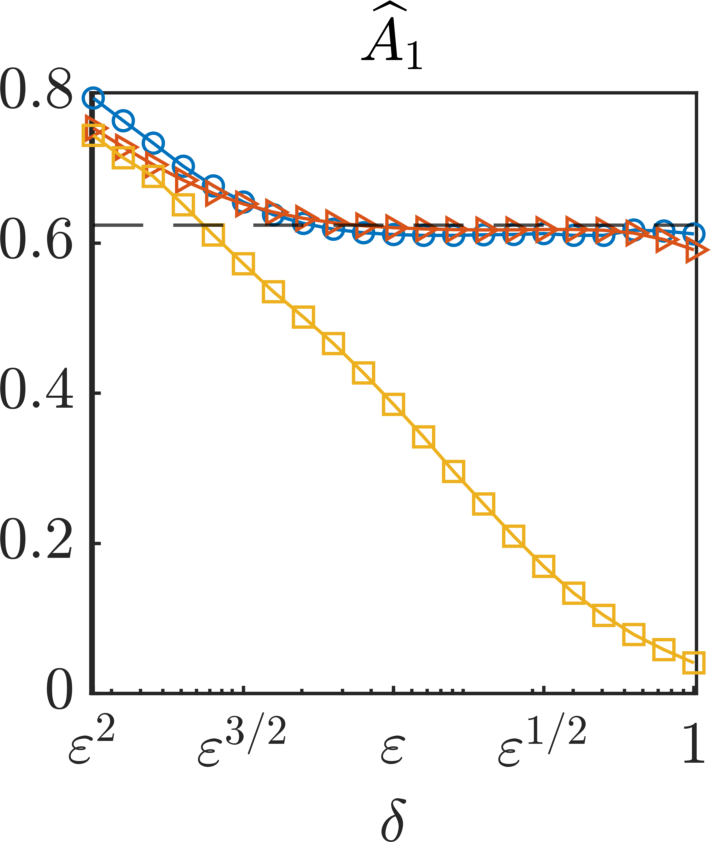} &
		\includegraphics[]{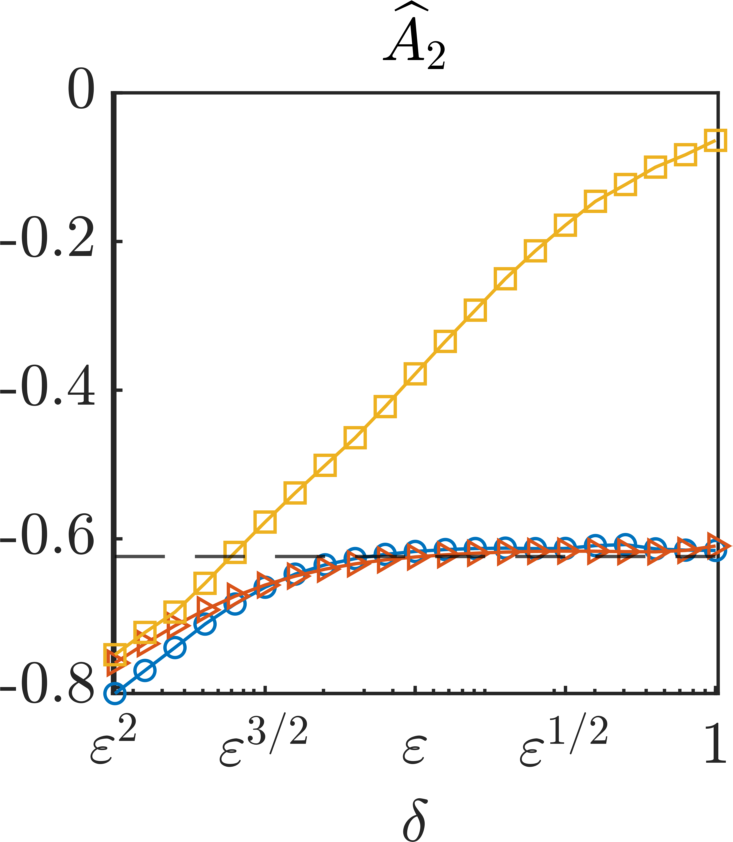} & 
		\includegraphics[]{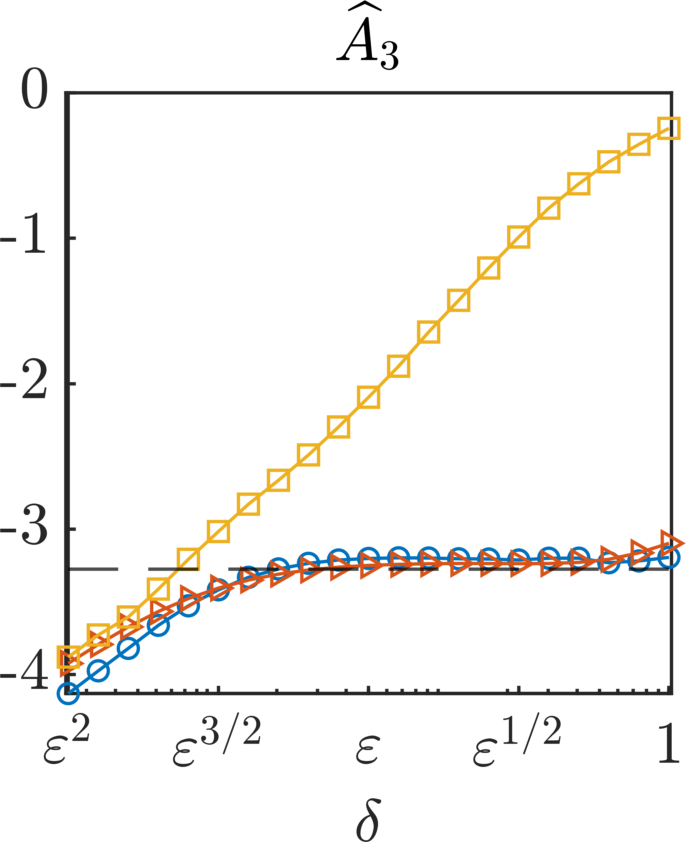} \\
		\includegraphics[]{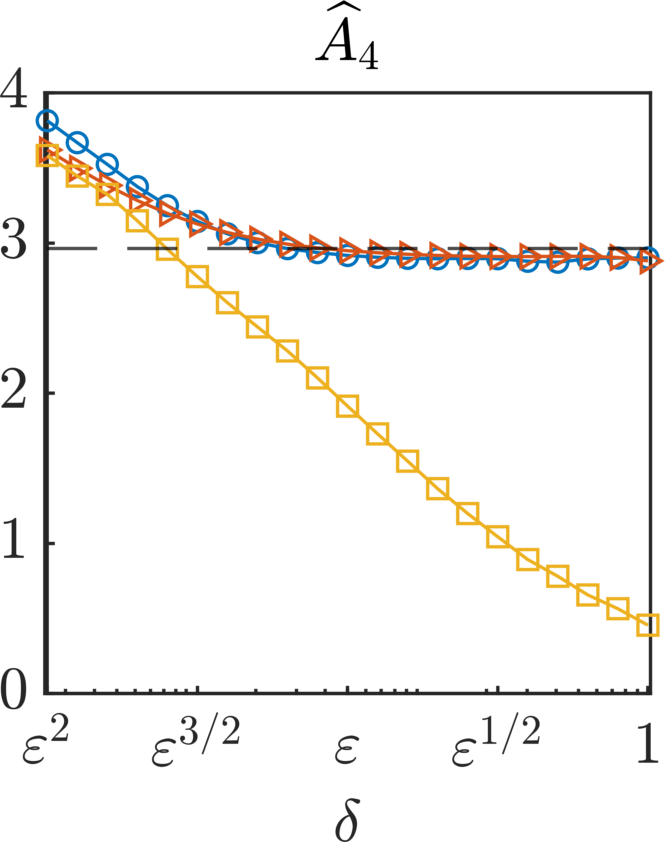} &
		\includegraphics[]{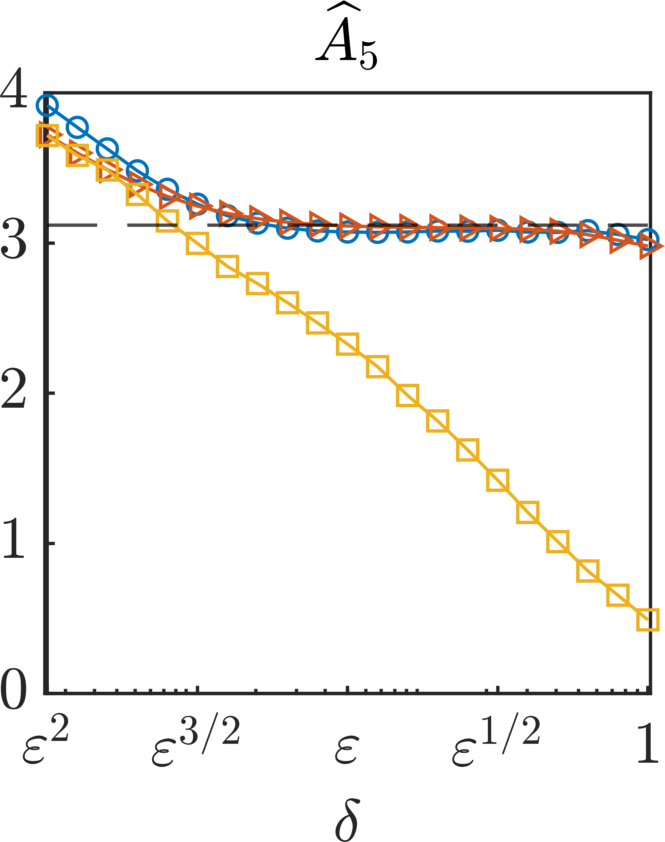} & 
		\includegraphics[]{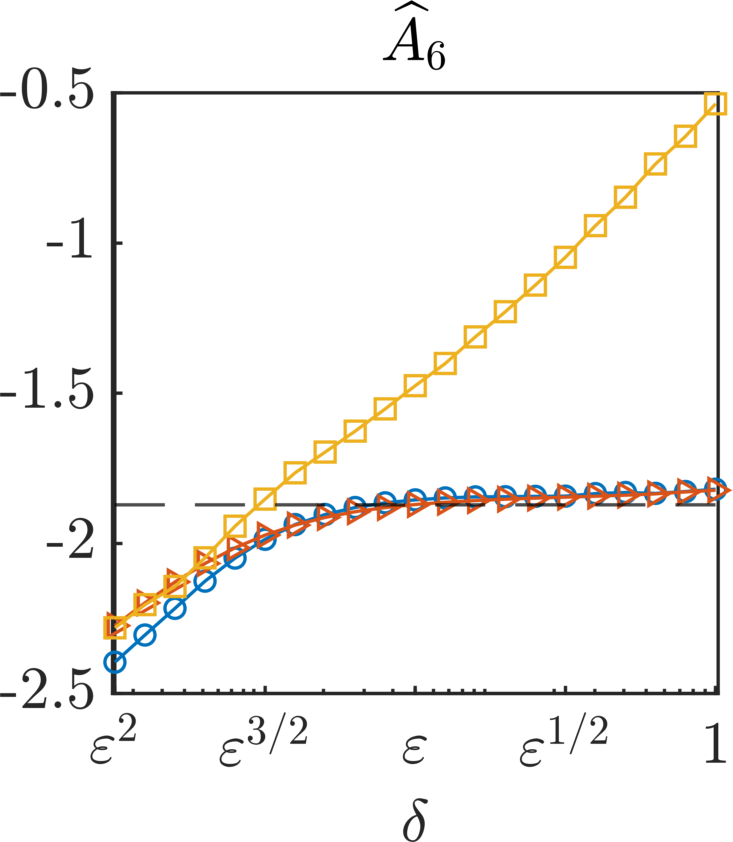}
	\end{tabular}
	
	\vspace{0.25cm}
	\begin{tabular}{cccc}
		\raisebox{1.5cm}{\includegraphics[]{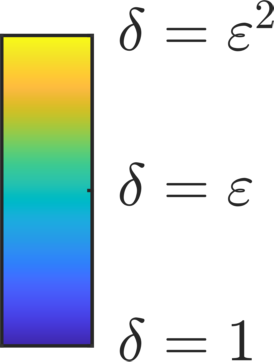}} &
		\includegraphics[]{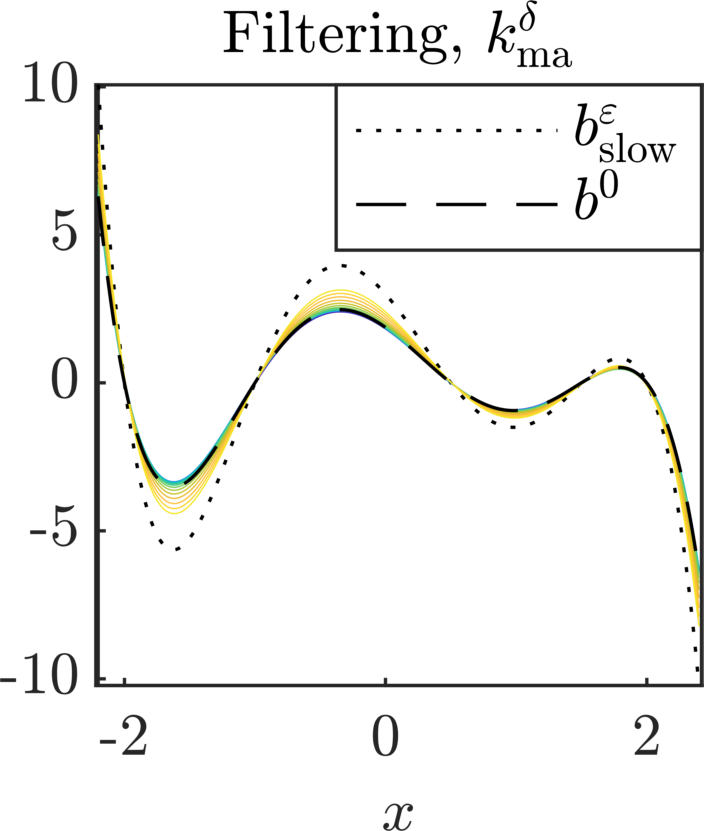} &
		\includegraphics[]{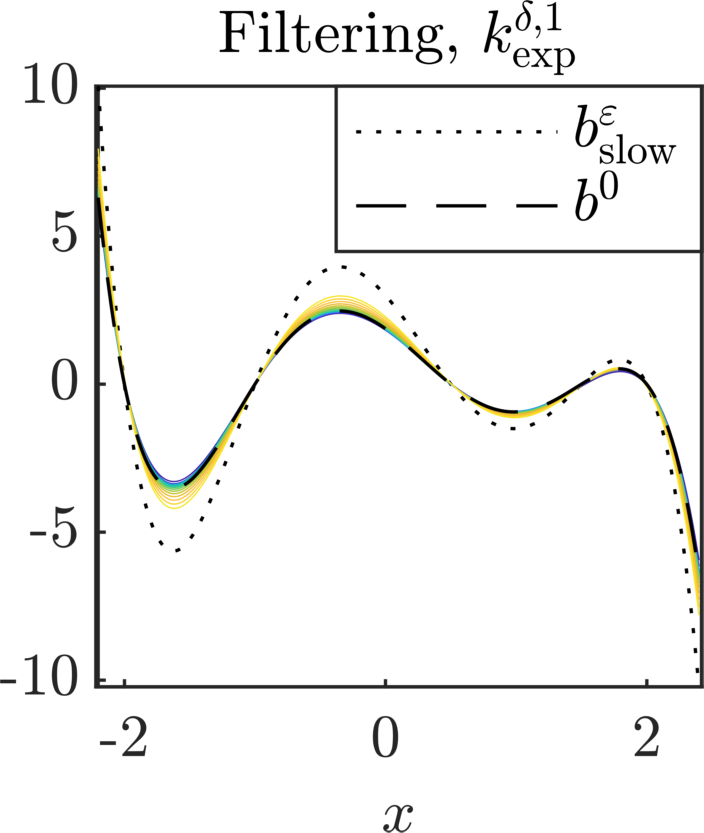} & 
		\includegraphics[]{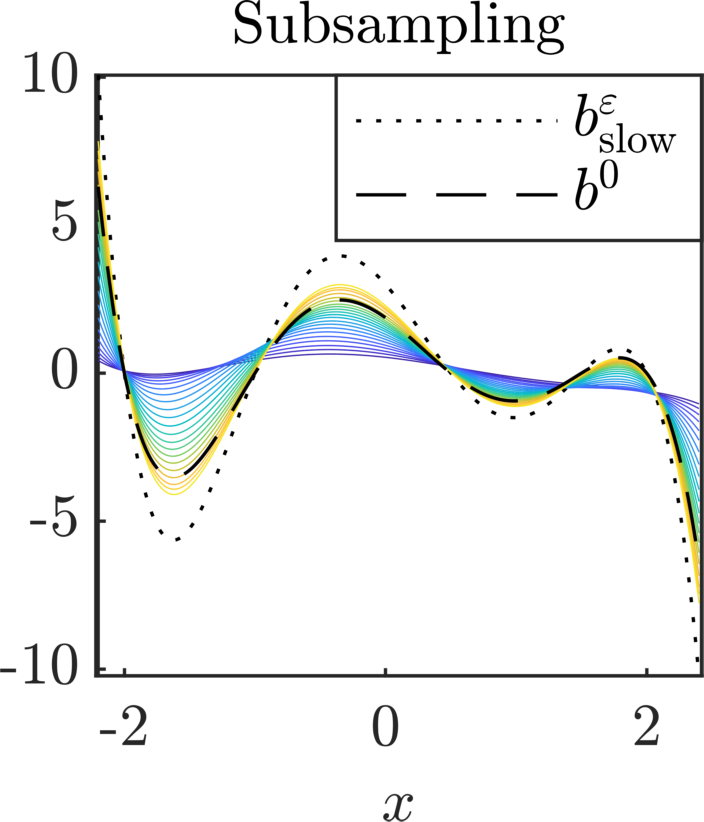} 
	\end{tabular}
	\caption{Estimation of the drift coefficient in the one-dimensional semi-parametric setting. First row: on the left, we show the evolution of the relative error with respect to $t \in [0, 5\cdot 10^4]$, and on the right the dependence of the relative error on the filtering/subsampling width $\delta \in [\epl^2, 1]$. Second and third rows: Dependence on $\delta$ of the estimators for the components $A_i$, $i=1,\ldots,6$ of the effective drift coefficient obtained with filtered data with both kernels, and with subsampling. Fourth row: Dependence on the filtering/subsampling width $\delta$ of the estimated drift function with the same three methodologies. Remark: The legend on top is valid for all plots, except the last row.}
	\label{fig:Poly_Drift}
\end{figure}
\begin{figure}
	\centering
	\hspace{0.5cm}\includegraphics[]{Figures/legend_methods}
	
	\vspace{0.25cm}
	\includegraphics[]{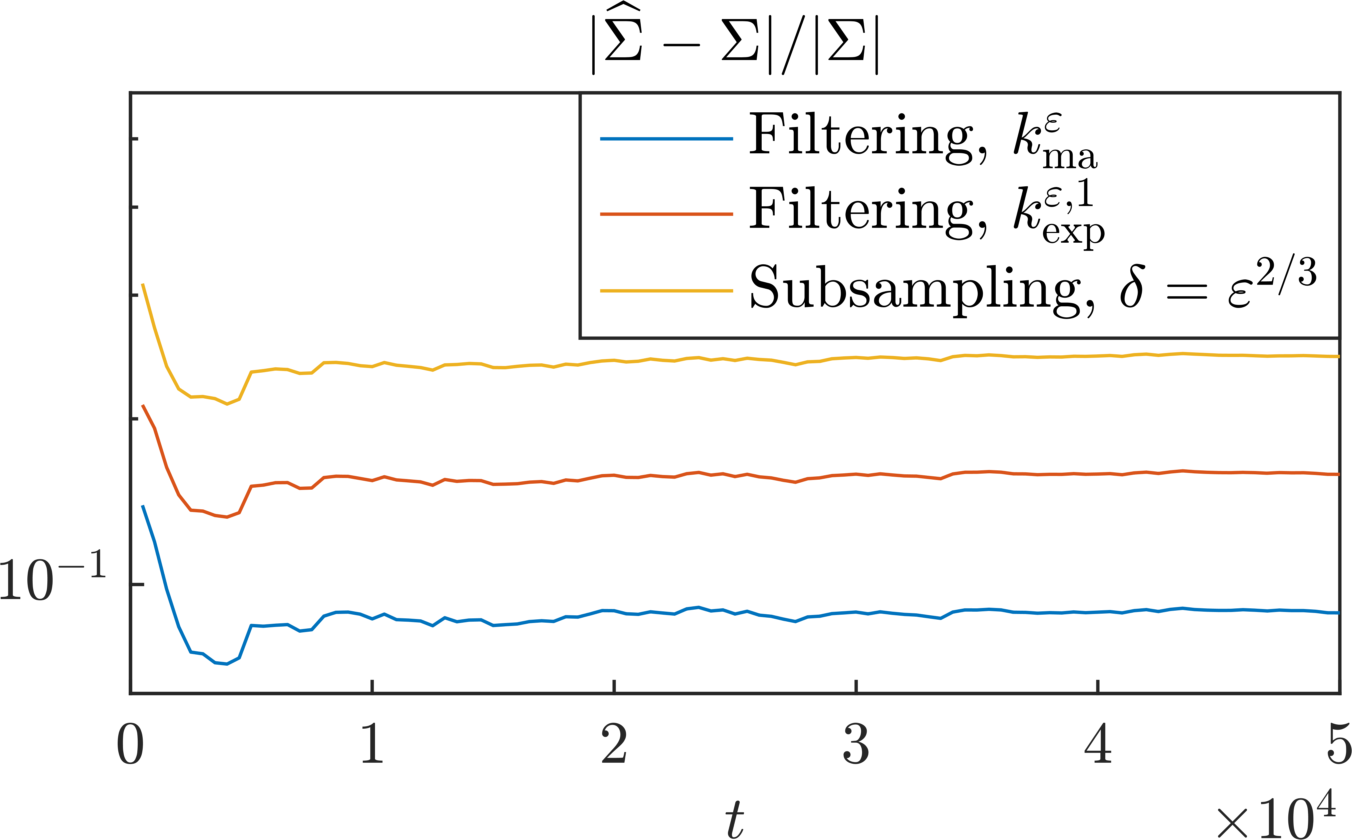}
	\includegraphics[]{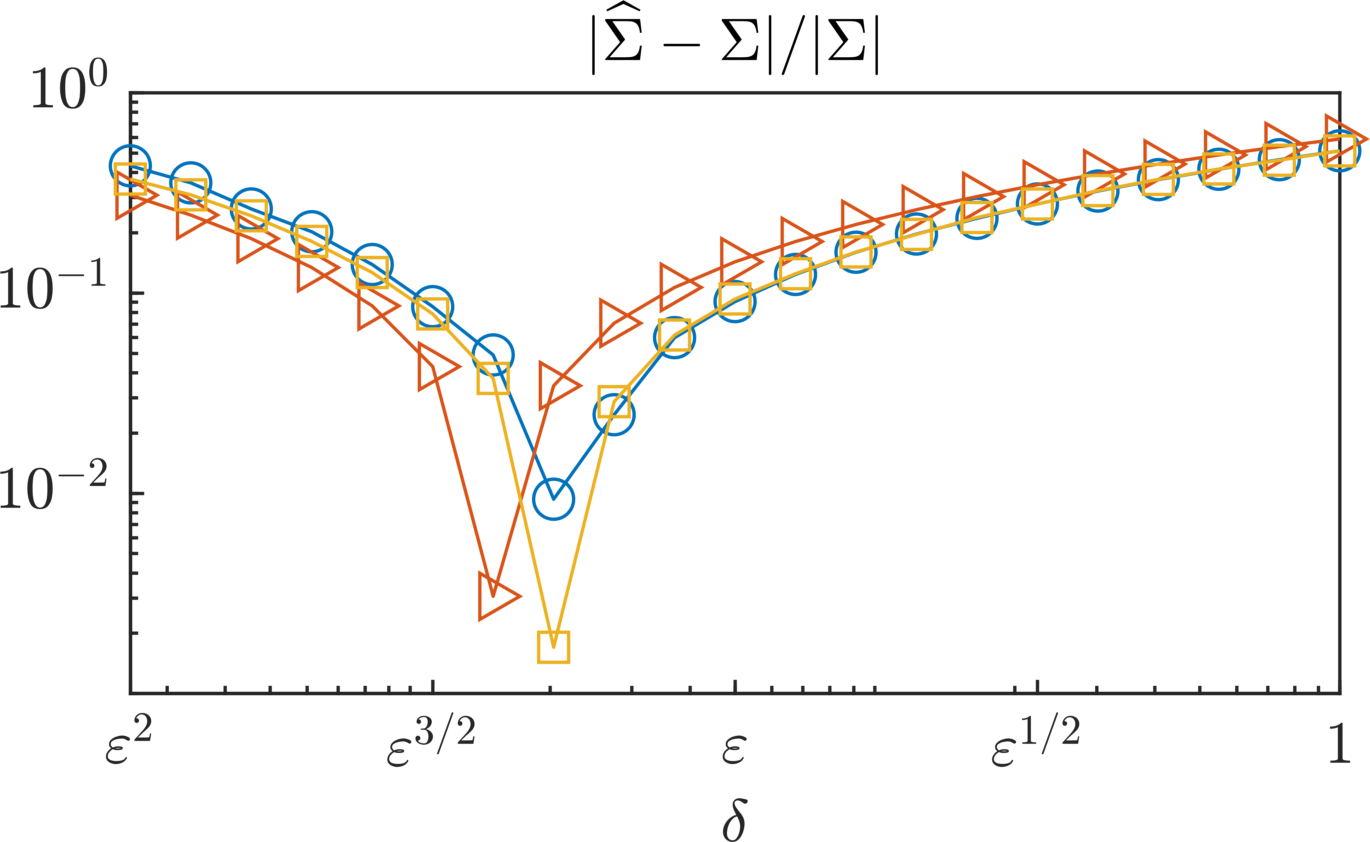}
	
	\vspace{0.25cm}
	\includegraphics[]{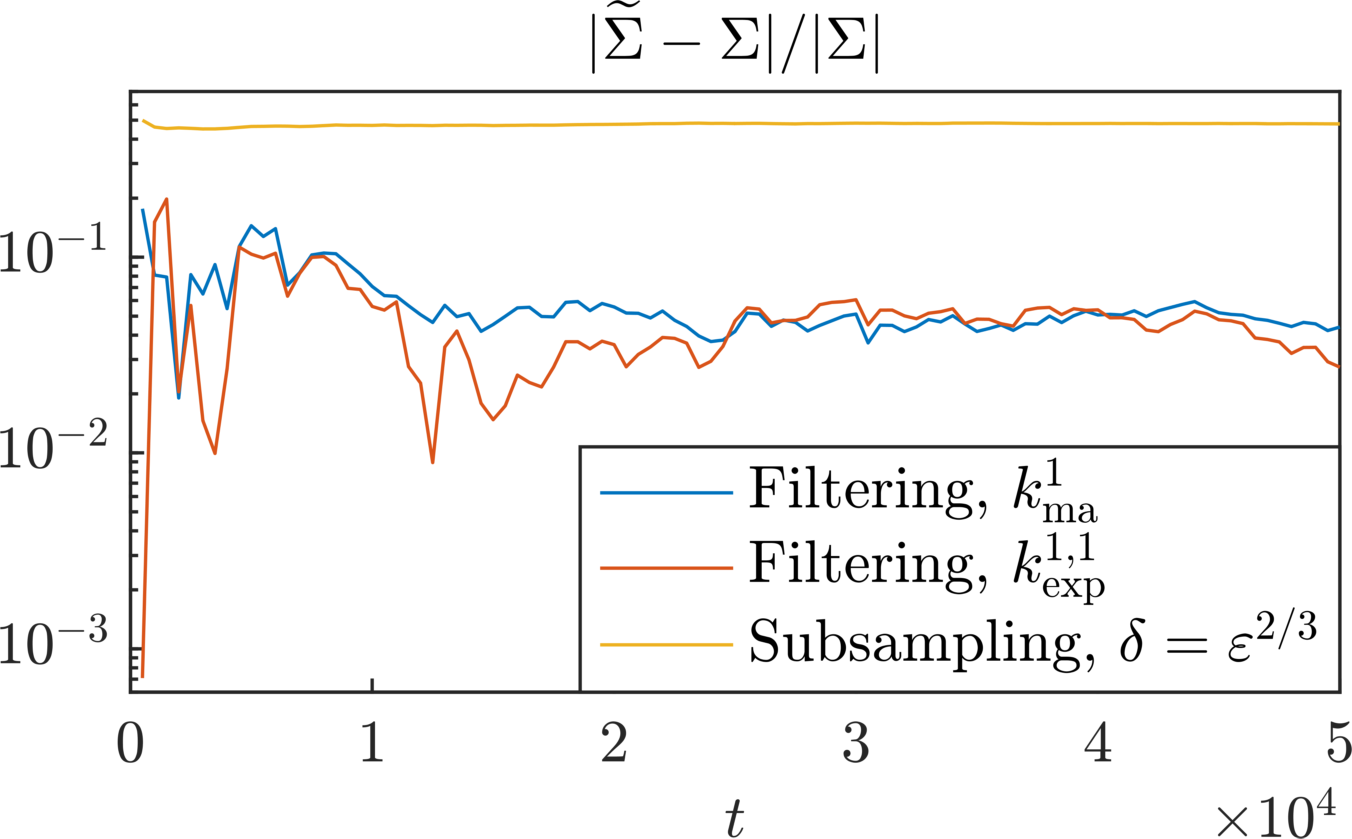}
	\includegraphics[]{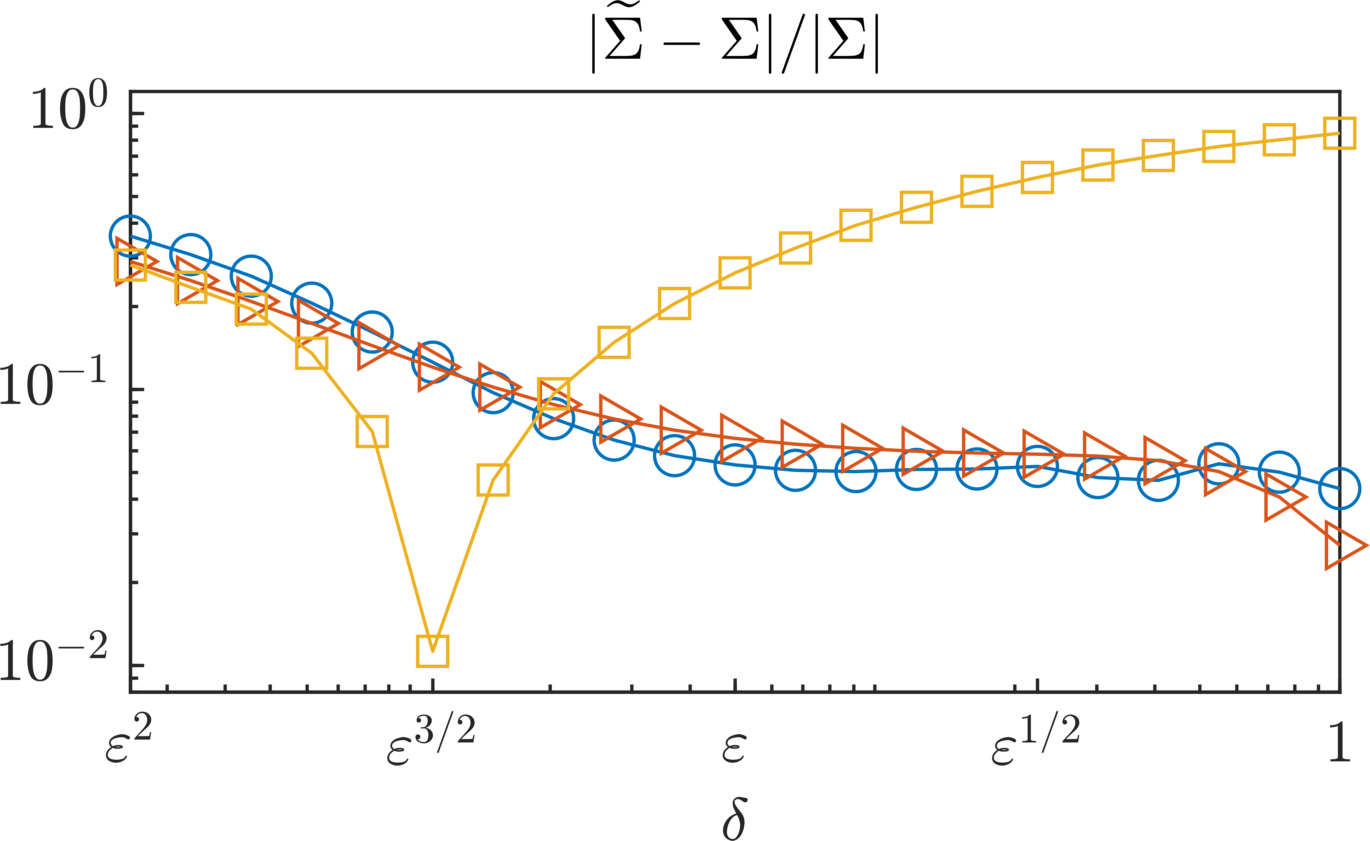}
	
	\vspace{0.25cm}
	\begin{tabular}{cc}
		\includegraphics[]{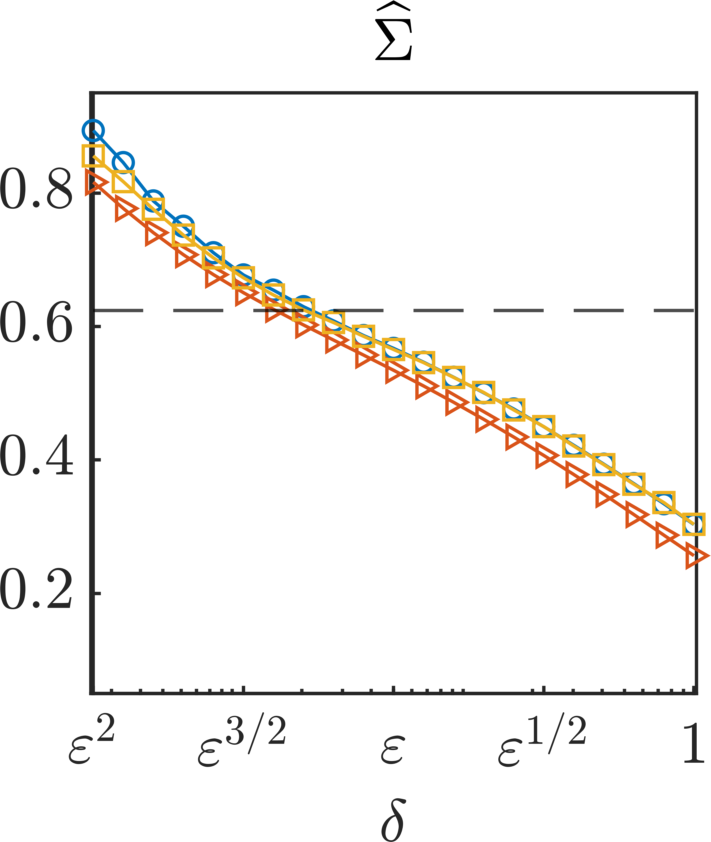} &
		\includegraphics[]{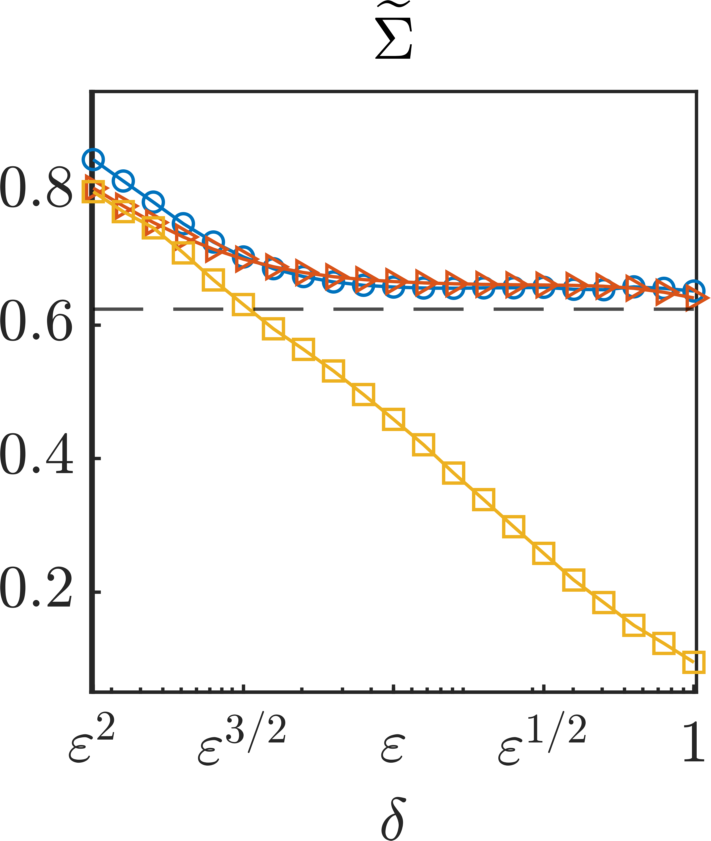}
	\end{tabular}
	\caption{Estimation of the diffusion coefficient in the one-dimensional semi-parametric setting. First ($\widehat \Sigma$) and second ($\widetilde \Sigma$) row: on the left, we show the evolution of the relative error with respect to $t \in [0, 5\cdot 10^4]$, and on the right the dependence of the relative error with respect to the filtering/subsampling width $\delta \in [\epl^2, 1]$. Third row: Dependence on $\delta$ of the estimators $\widehat \Sigma$ and $\widetilde \Sigma$ of the effective diffusion coefficient obtained with filtered data with both kernels, and with subsampling.}
	\label{fig:Poly_Diffusion}
\end{figure}

We now consider the semi-parametric setting for a one-dimensional multiscale Langevin equation of the form \eqref{eq:SDE_MS_Lang_d1}. In particular, we consider the number of parameters $N = 6$ and define $V\colon \R \to \R^N$ as
\begin{equation}
V(x) = \begin{pmatrix} \dfrac{x^6}{6} & \dfrac{x^5}{5} & \dfrac{x^4}{4} & \dfrac{x^3}{3} & \dfrac{x^2}{2} & x \end{pmatrix}^\top.
\end{equation}
The slow-scale potential is premultiplied by the six dimensional drift coefficient $\alpha \in \R^6$ 
\begin{equation}
\alpha = \begin{pmatrix} 1 & -1 & -5.25 & 4.75 & 5 & -3	\end{pmatrix}^\top.
\end{equation}
With this choice, the slow-scale potential $\alpha \cdot V$ has three stable points. Moreover, we choose the fast-scale potential as $p = \sin(y)$, the diffusion coefficient $\sigma = 1$, and the multiscale parameter $\epl = 0.05$. We then wish to infer the effective drift and diffusion coefficients $A \in \R^6$ and $\Sigma > 0$ from synthetic data $X^\epl = (X^\epl(t), 0 \leq t \leq T)$ with $T = 5 \cdot 10^4$, generated with the Euler--Maruyama method with time step $\Delta_t = \epl^3$. In this case, the homogenization coefficient $\mathcal K \approx 0.62$. We then infer the effective drift and diffusion coefficients $A \in \R^6$ and $\Sigma > 0$ which define the homogenized equation \eqref{eq:SDE_Hom_Lang_d1}. Similarly to \cref{sec:NumExp_OU}, we compare the two filtering methodologies (moving average and exponential kernels), and subsampling. Moreover, we compute for all strategies the effective drift estimator $\widehat A$, and the effective diffusion estimators $\widehat \Sigma$ and $\widetilde \Sigma$. 

Numerical results, given in \cref{fig:Poly_Drift,fig:Poly_Diffusion}, demonstrate that
\begin{enumerate}[label=(\alph*)]
	\item \cref{fig:Poly_Drift}: The six-dimensional effective drift coefficient is estimated accurately by both filtering-based methodologies, which yield comparable results both in terms of time convergence and of robustness with respect to the filtering width $\delta$. At final time, both estimators have relative errors of magnitude $10^{-2}$, and all six components of the drift coefficient are accurately retrieved. We note that for $\delta= 1$, i.e., the go-to implementation when $\epl$ is unknown, the moving average estimator appears to be slightly better than the one obtained with the exponential filter.  Subsampling, conversely, does not enable to retrieve the drift coefficient accurately and strongly depends on the subsampling width $\delta$. We remark that that the optimal value for $\delta$ appears to be $\delta \approx \epl^{3/2}$, which is surprising in view of the convergence result of \cite{PaS07}. Finally, we notice that for all values of $\delta$ the estimated drift function is visually almost identical to the effective drift, and is clearly differentiated from the slow component of the multiscale drift.
	\item \cref{fig:Poly_Diffusion}: The diffusion coefficient is estimated more accurately by the estimator $\widetilde \Sigma$ than $\widehat \Sigma$ when employing filtered data. Indeed, for both filtering kernels the estimator $\widetilde \Sigma$ is very robust with respect to the filtering width $\delta$, and results are very accurate in case $\delta = 1$, the go-to implementation when the scale-separation parameter $\epl$ is unknown. Conversely, the estimator $\widehat \Sigma$ strongly depends on the filtering/subsampling width. We note that choosing $\delta = \epl$ the moving average estimator $\widehat \Sigma_{\mathrm{ma}}^\epl$ outperforms the corresponding estimator $\widehat \Sigma_{\mathrm{exp}}^{\epl,1}$ based on exponential filtering. For subsampling, the two estimators are equivalent in terms of accuracy, and are extremely dependent of the subsampling width $\delta$. Equivalently to the drift estimator, the best inference results seem to be given by $\delta \approx \epl^{3/2}$, and not at the conjectured optimal value $\epl^{2/3}$.
\end{enumerate}

\subsection{A Two-dimensional Example}\label{sec:NumExp_2d}

\begin{figure}[t!]
	\centering
	\begin{tabular}{ccc}
		\includegraphics[]{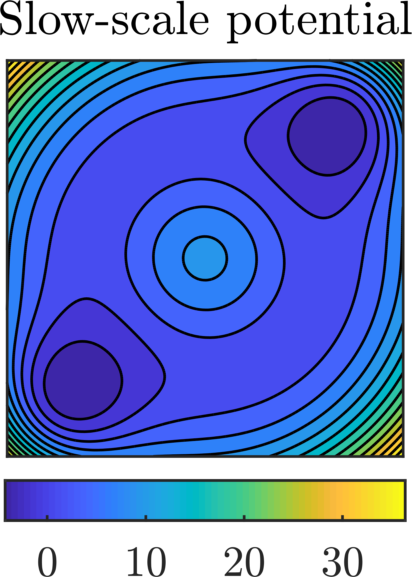} & \includegraphics[]{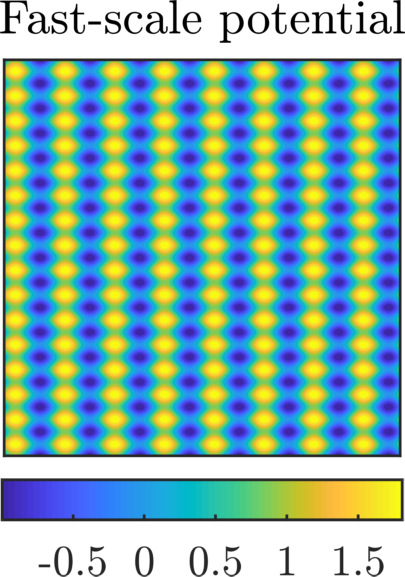} & \includegraphics[]{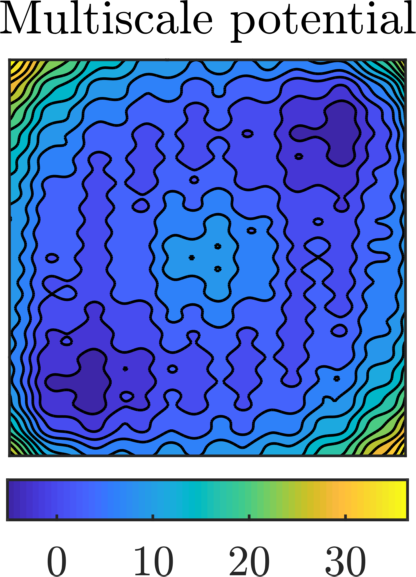}
	\end{tabular}
	\caption{Slow-, fast-, and multiscale potentials for the two-dimensional example of \cref{sec:NumExp_2d}, depicted here in the square $(-2.5, 2.5)^2$.}
	\label{fig:TwoD_Potential}
\end{figure}
\begin{figure}[t!]
	\centering
	\hspace{0.25cm}\includegraphics[]{Figures/legend_methods}
	
	\vspace{0.25cm}
	\includegraphics[]{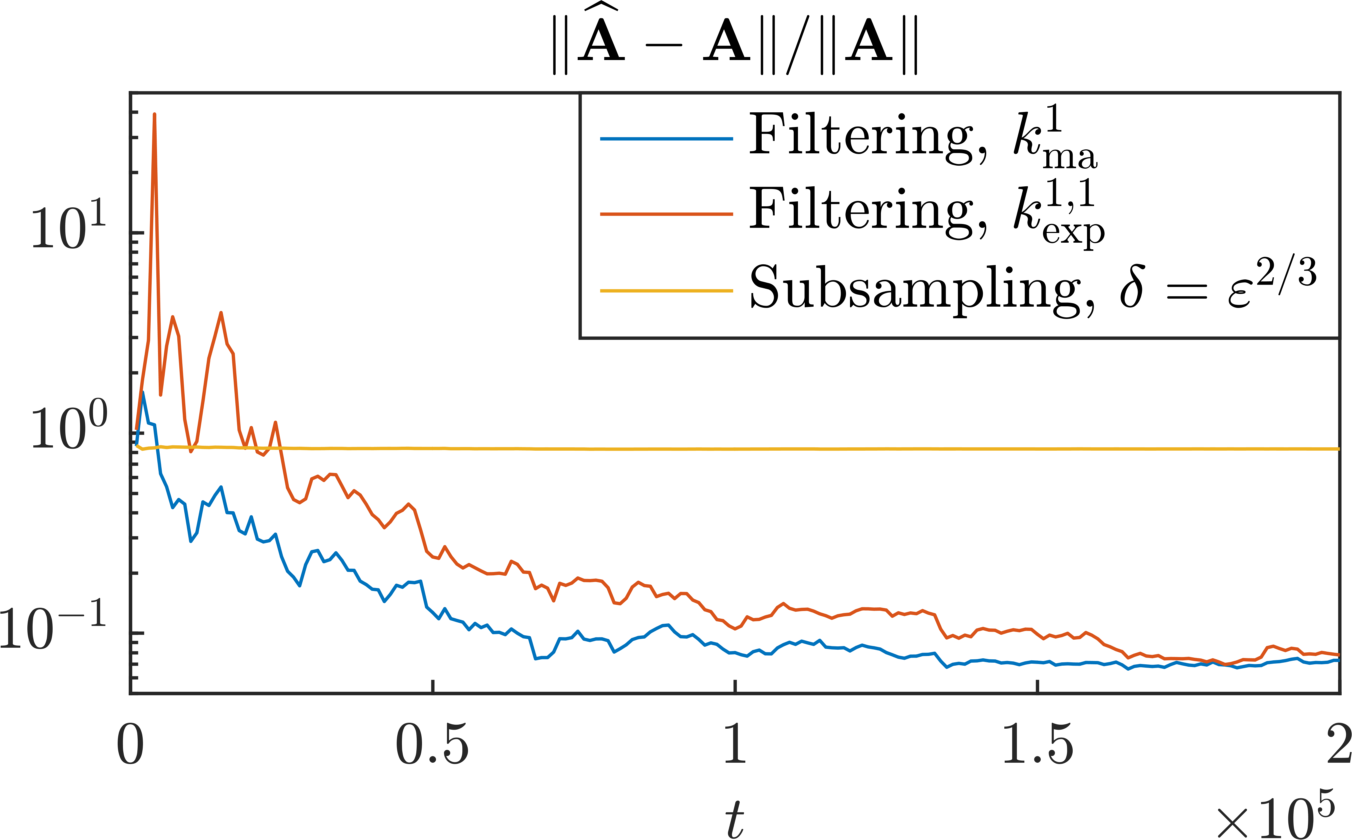}
	\includegraphics[]{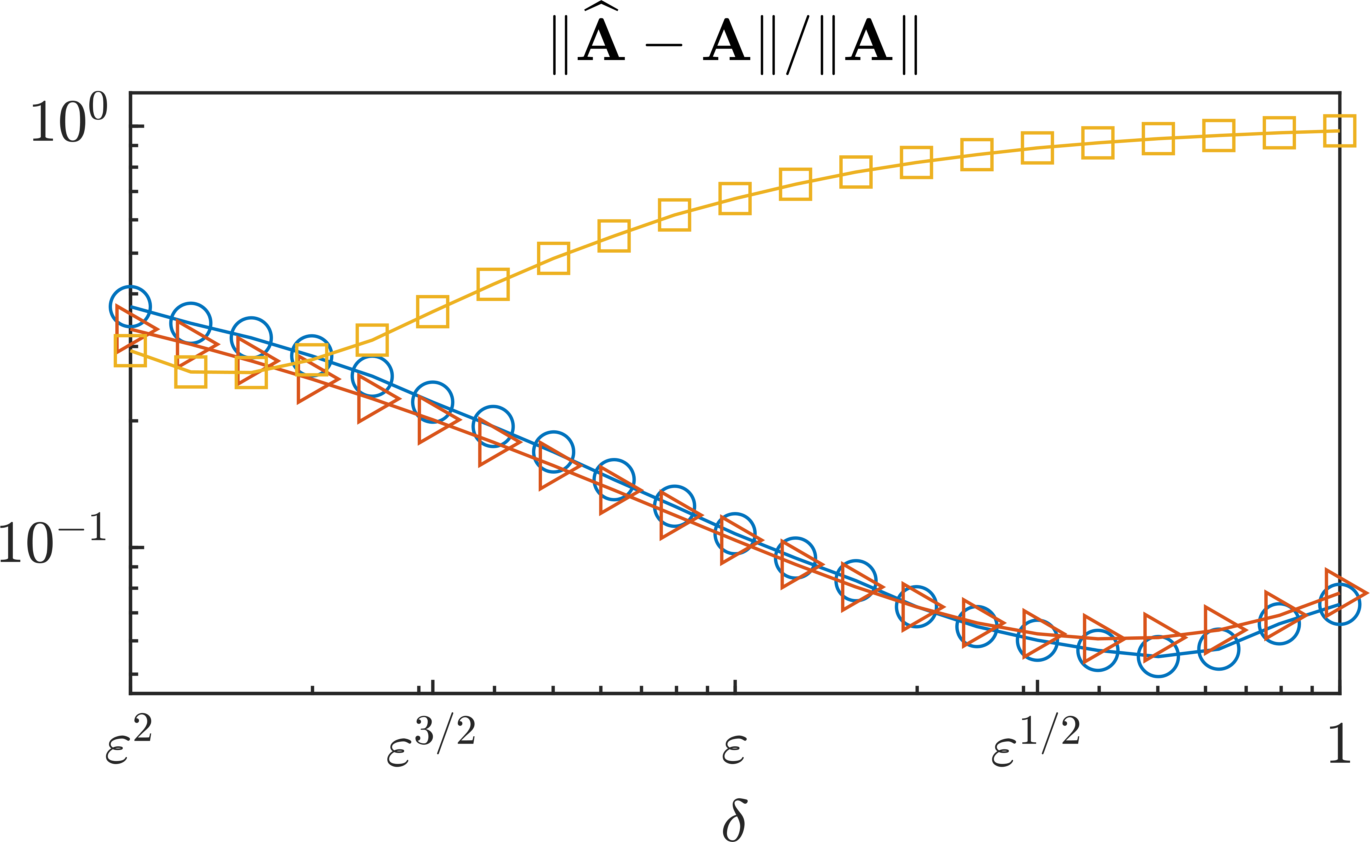}
	
	\vspace{0.2cm}
	\begin{tabular}{cccc}
		\includegraphics[]{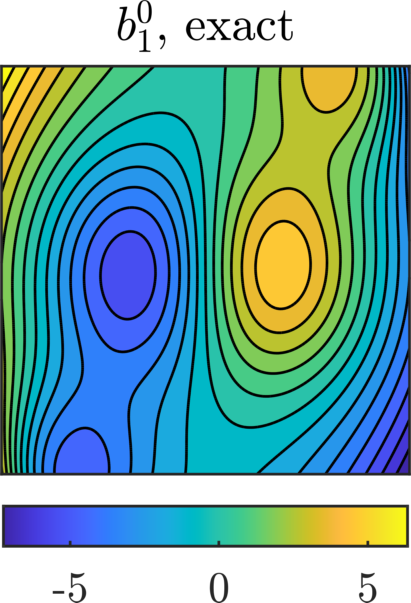} & \includegraphics[]{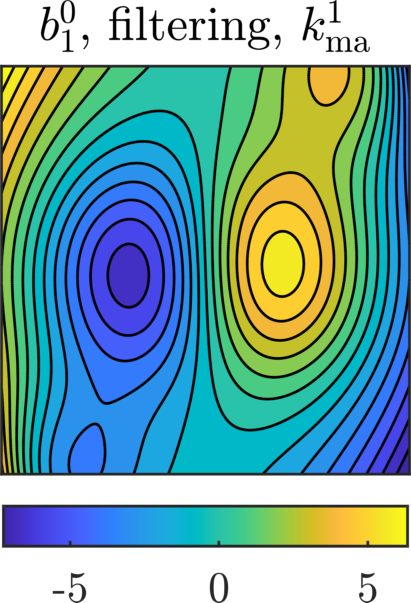} & \includegraphics[]{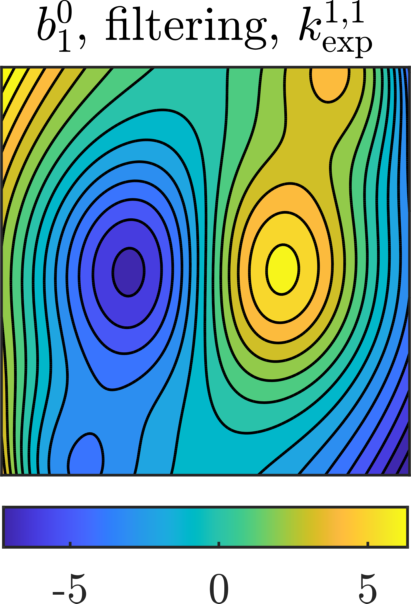} & \includegraphics[]{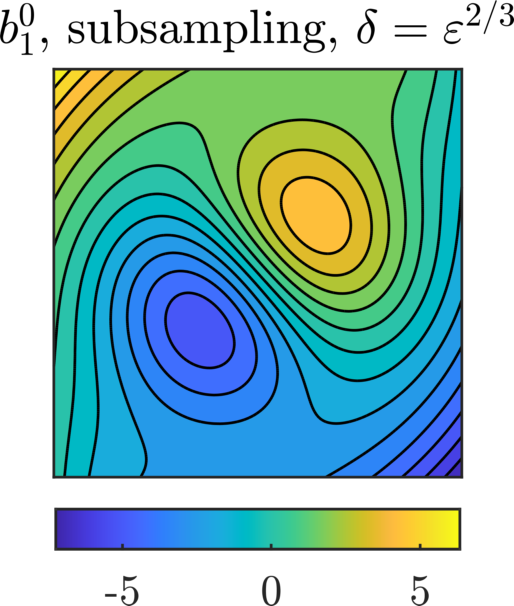} \vspace{0.3cm} \\
		\includegraphics[]{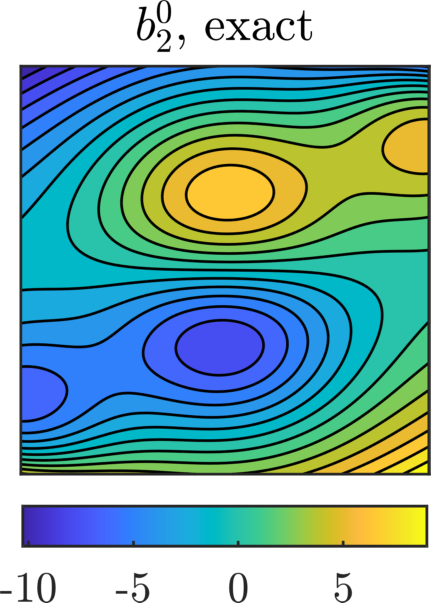} & \includegraphics[]{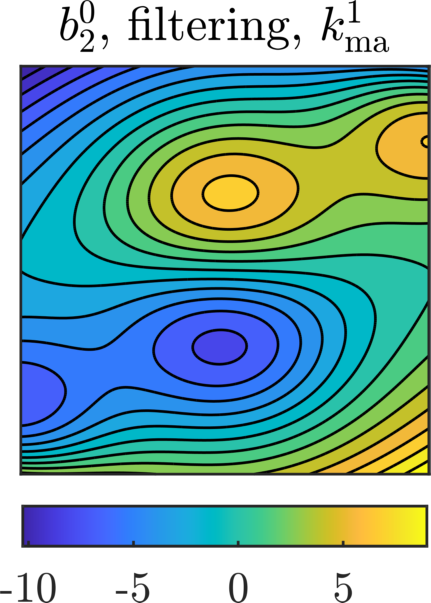} & \includegraphics[]{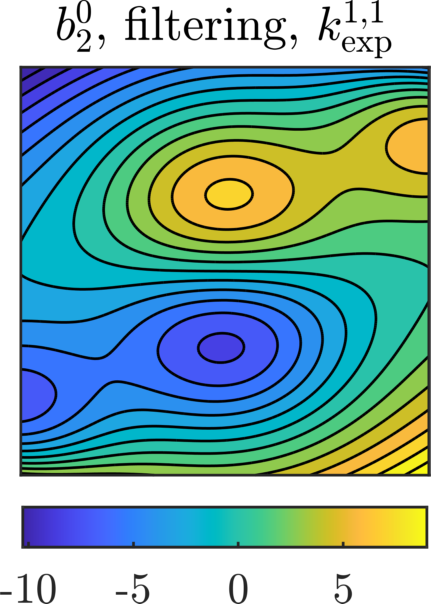} & \includegraphics[]{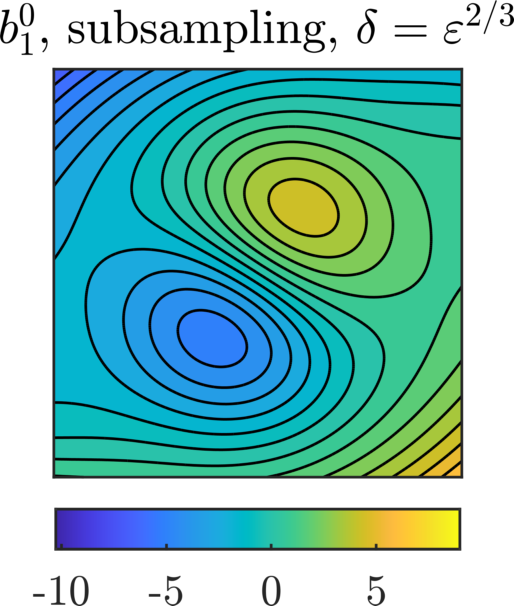}
	\end{tabular}
	\caption{Estimation of the effective drift coefficient for the two-dimensional example of \cref{sec:NumExp_2d}. First row: Dependence of the relative error with respect to $t \in [0,T]$ (left) and to the subsampling/filtering width $\delta$ (right) for both filtering methods and subsampling. Second and third row: Graphical representation in the square $(-2.5, 2.5)^2$ of the estimated effective drift function at final time for the same three methods.}
	\label{fig:TwoD_Drift}
\end{figure}
\begin{figure}
	\centering
	\hspace{0.25cm}\includegraphics[]{Figures/legend_methods}
	
	\vspace{0.25cm}
	\includegraphics[]{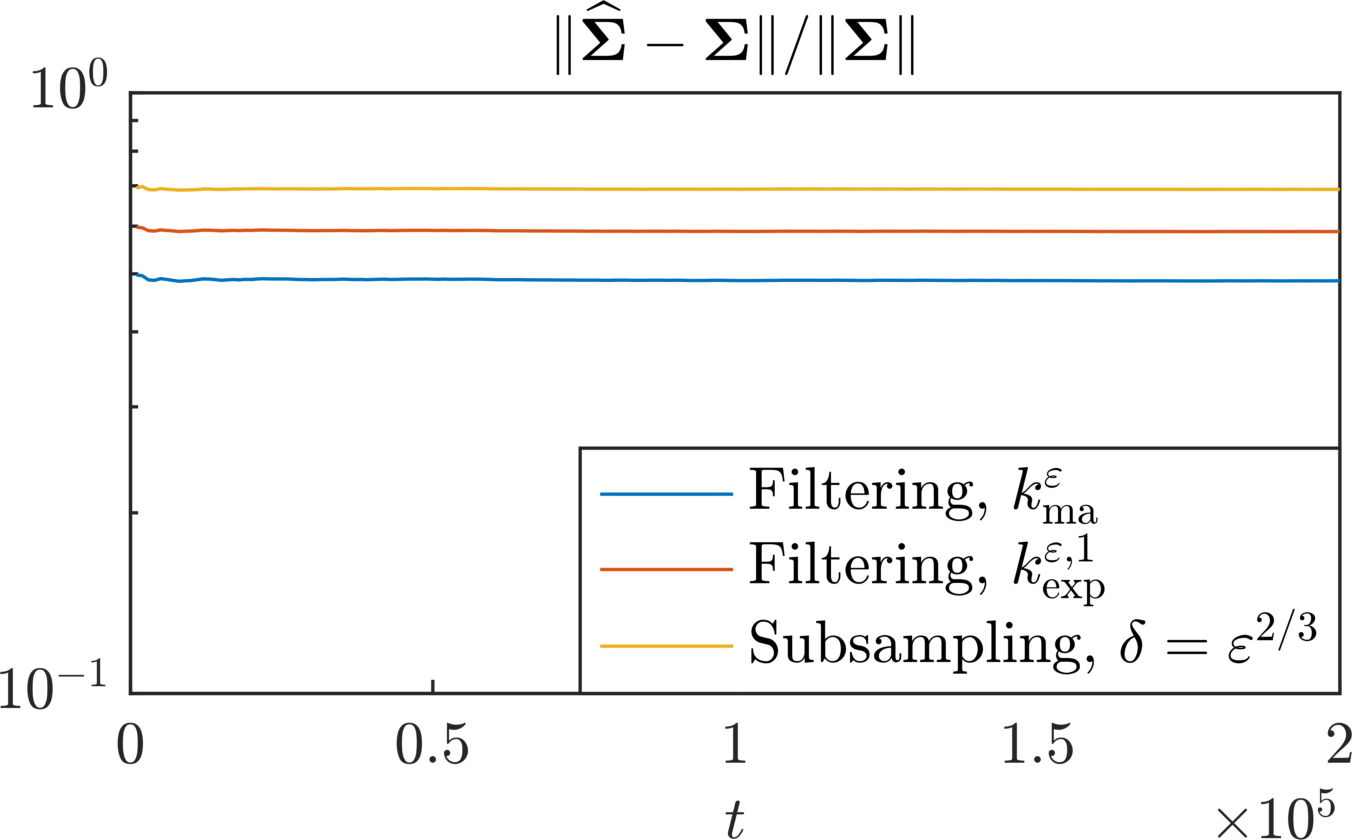}
	\includegraphics[]{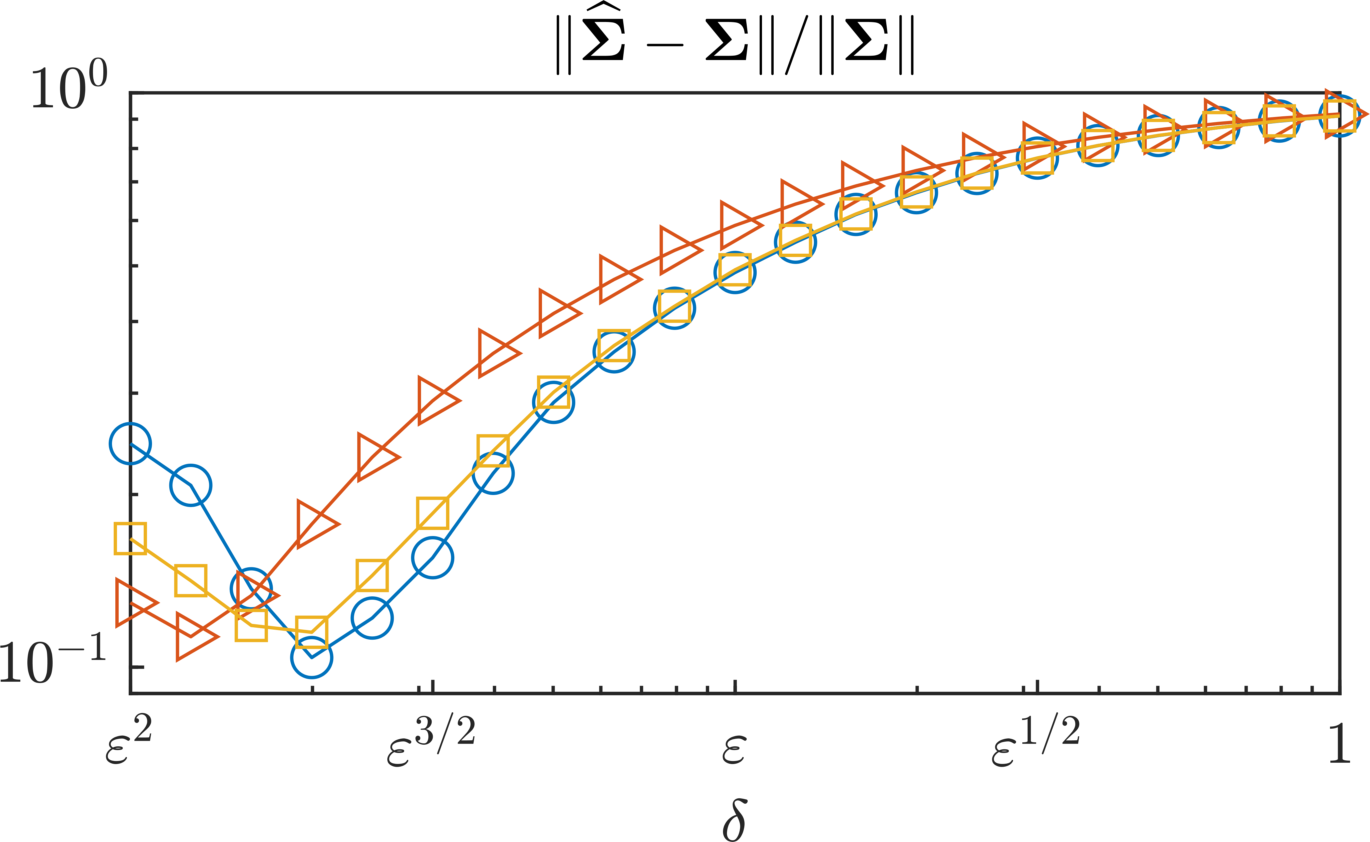}
	
	\vspace{0.25cm}
	\includegraphics[]{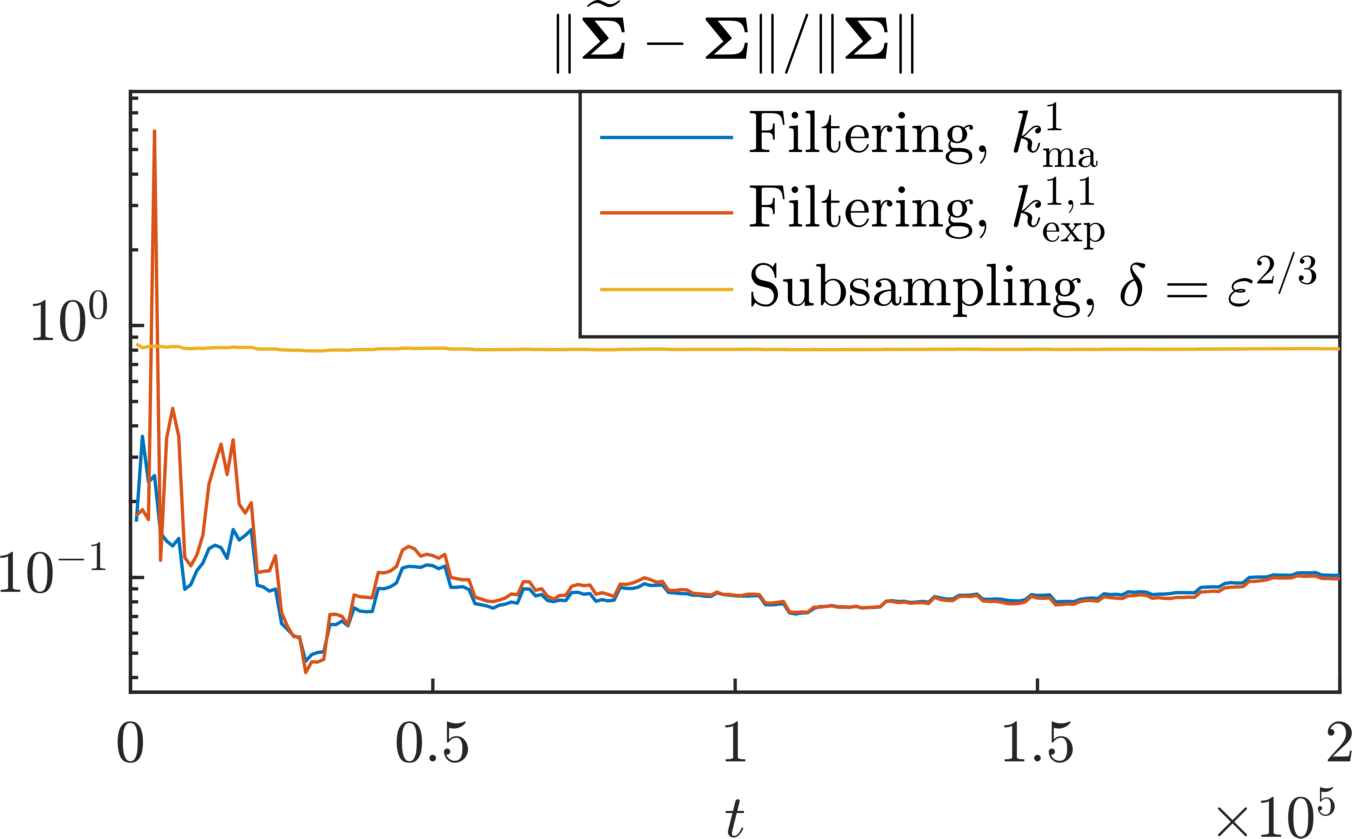}
	\includegraphics[]{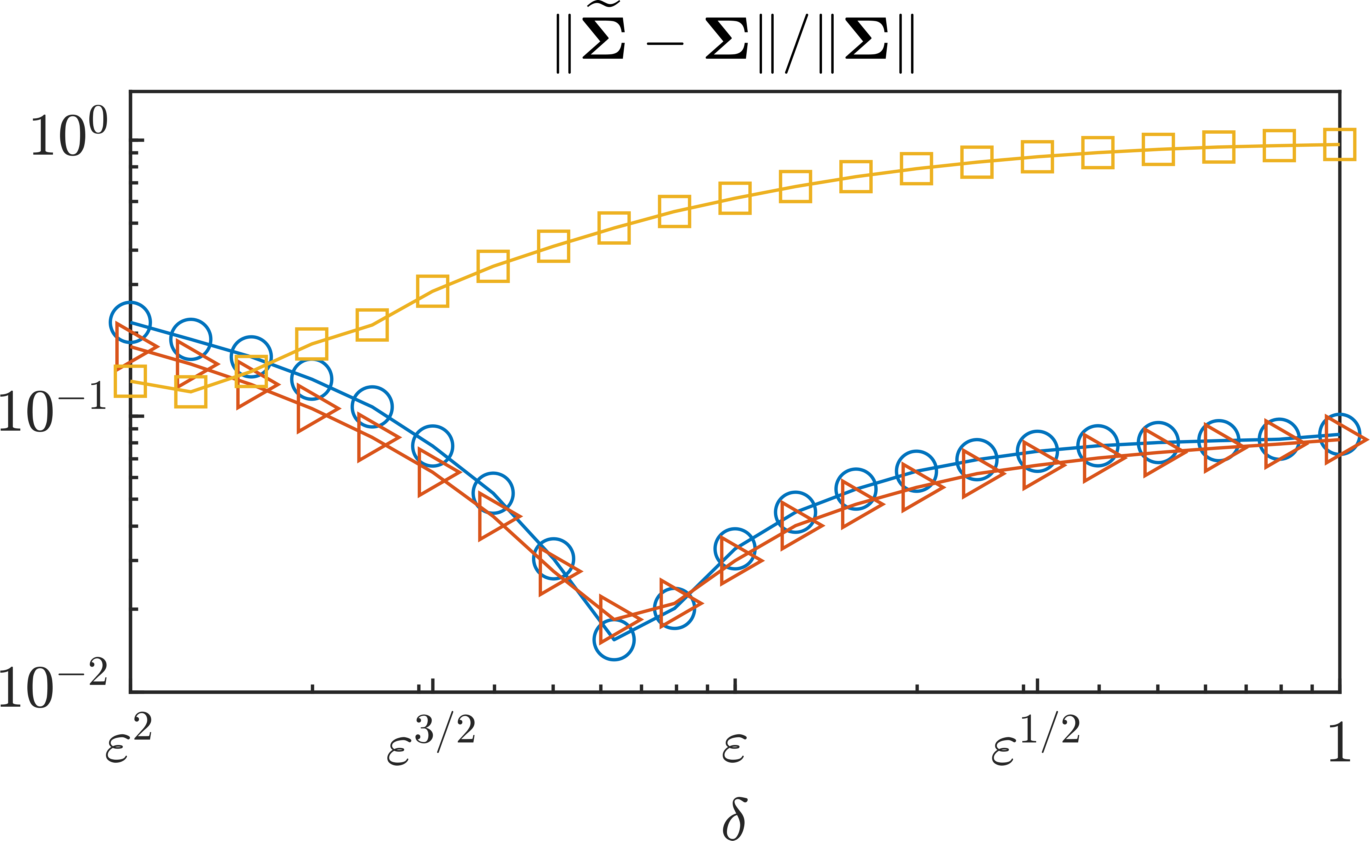}
	
	\vspace{0.25cm}
	\begin{tabular}{ccc}
		\includegraphics[]{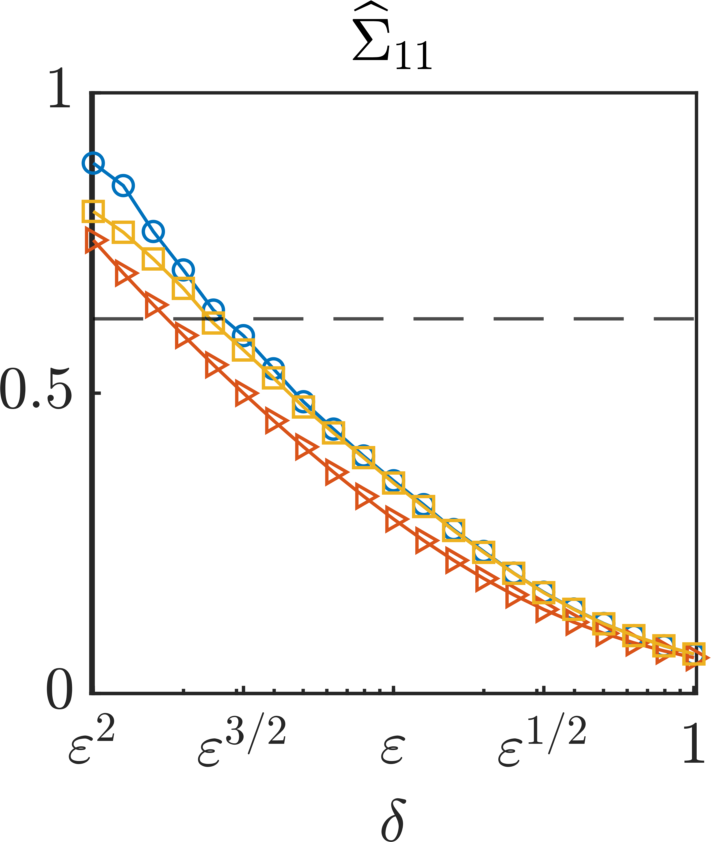} &
		\includegraphics[]{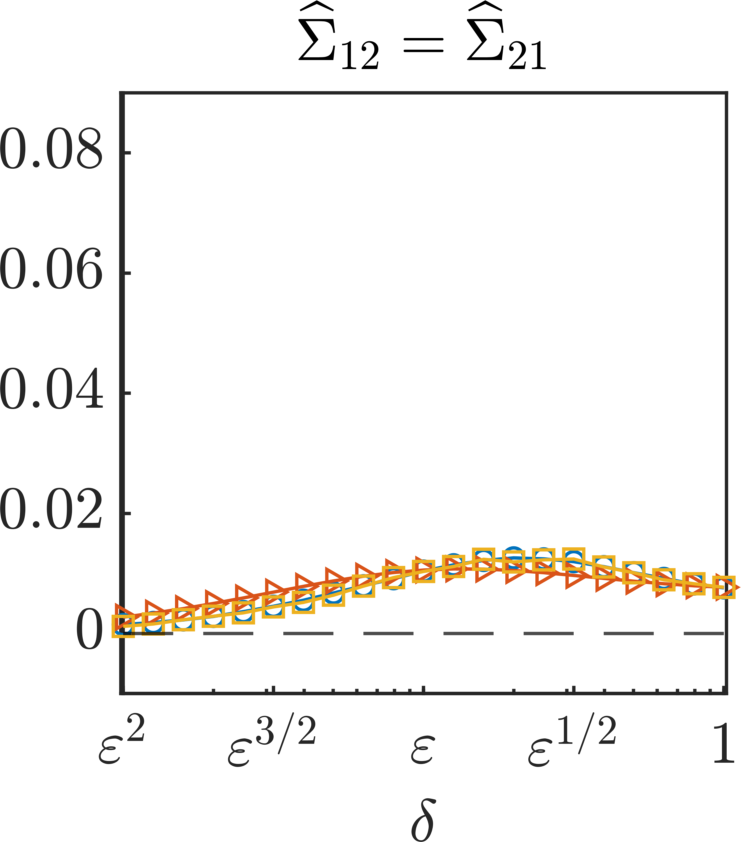} &
		\includegraphics[]{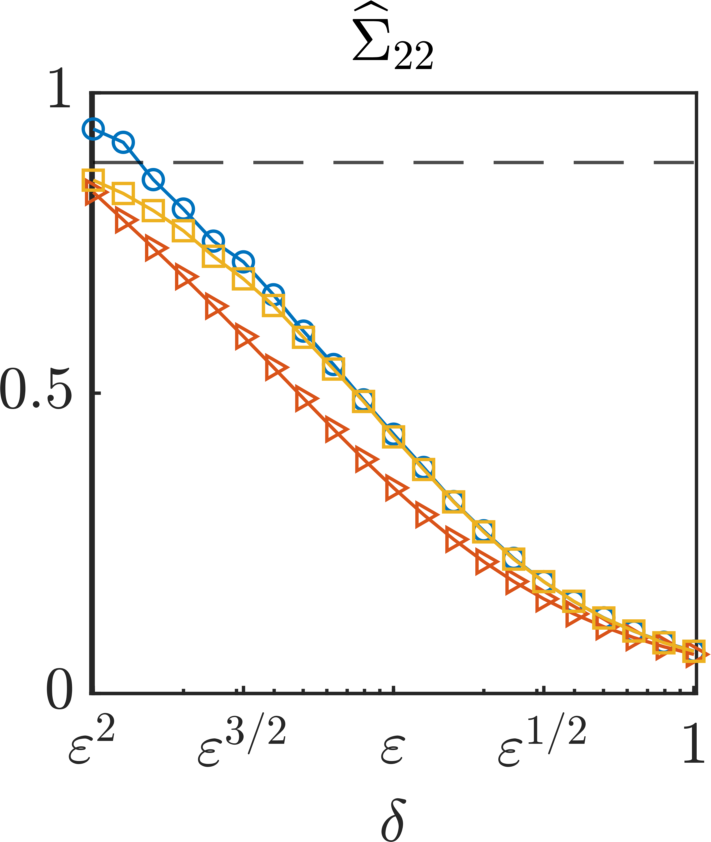} \\ 
		\includegraphics[]{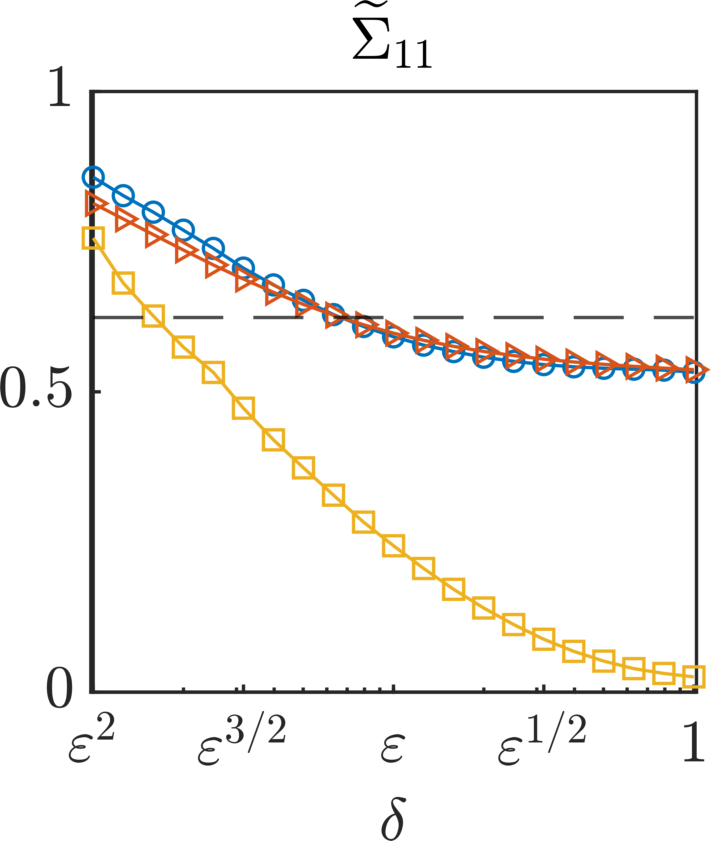} &
		\includegraphics[]{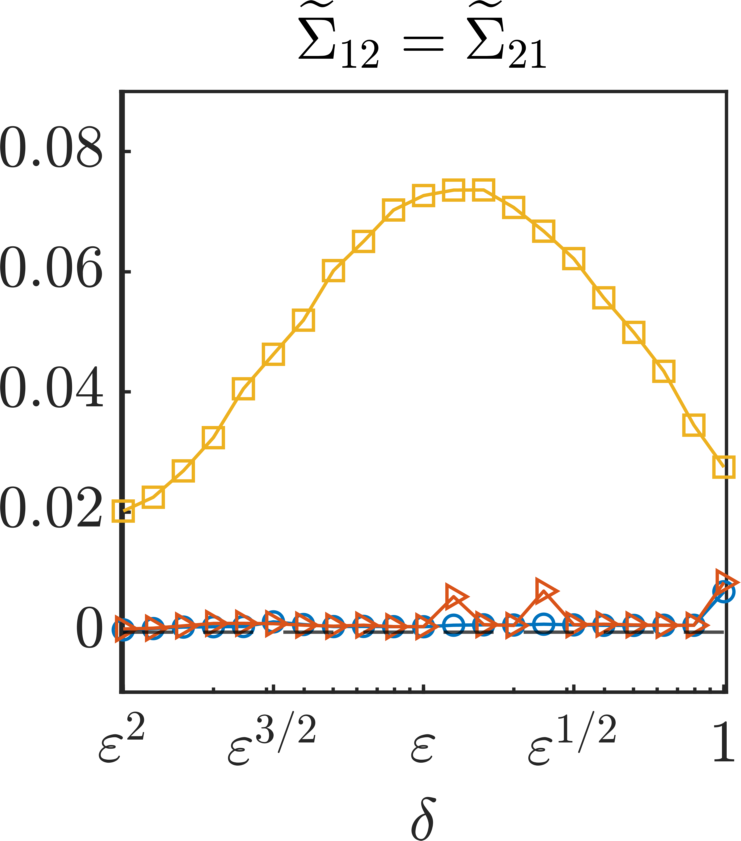} &
		\includegraphics[]{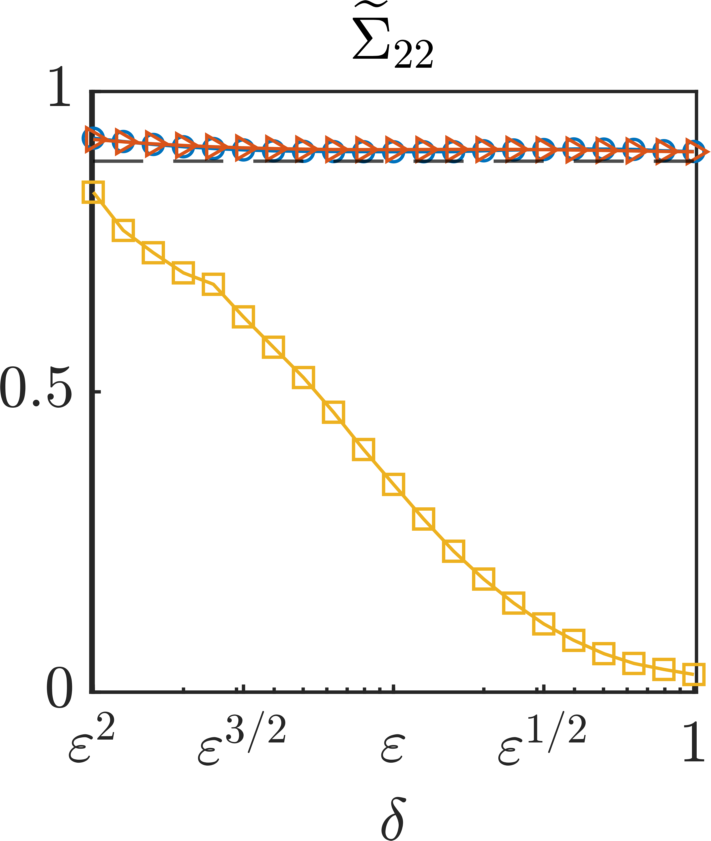}
	\end{tabular}
	\caption{Estimation of the effective diffusion coefficient for the two-dimensional example of \cref{sec:NumExp_2d}. First and second row: Dependence of the relative error of the estimators $\widehat \Sigma$ (first row) and $\widetilde \Sigma$ (second row) with respect to $t \in [0,T]$ (left) and to the subsampling/filtering width $\delta$ (right) for both filtering methods and subsampling. Third and fourth row: Sensitivity of the estimators of the entries of the diffusion matrix $\Sigma$ with respect to $\delta$, with both ``hat'' (third row) and ``tilde'' (fourth row) estimators, and for the same three methodologies as above.}
	\label{fig:TwoD_Diffusion}
\end{figure}

As a last numerical example, we consider a two-dimensional SDE ($d = 2$) of the form \eqref{eq:SDE_MS_Lang}. In particular, we let $N = 4$ and define
\begin{equation}
\begin{alignedat}{2}
V_1(x) &= \exp\left(-\norm{x - x_1}^2\right), \quad &&V_2(x) = \exp\left(-\norm{x - x_2}^2\right), \\
V_3(x) &= \exp\left(-\norm{x}^2\right), &&V_4(x) = \frac14 \norm{x}^4,
\end{alignedat}
\end{equation}
where $x_1 = (2, 2)^\top$, $x_2 = (-2, -2)^\top$. The exact drift coefficient in the multiscale dynamics is defined by $\alpha_1 = \alpha_2 = -15$, $\alpha_3 = 10$ and $\alpha_4 = 1$. We choose the fast-scale periodic potential $p\colon \R^2\to \R$ as
\begin{equation}
p(y) = \sin\left(y_1\right) + \sin^2\left(y_2\right),
\end{equation}
and let the diffusion coefficient $\sigma = 1$ and the scale-separation parameter $\epl = 0.1$. Since the fast-scale potential can be decomposed as $p(y) = p_1(y_1) + p_2(y_2)$, the homogenization coefficient $\mathcal K$ is diagonal and its diagonal components can be computed employing the one-dimensional formula \eqref{eq:K1d}. In particular, we have
\begin{equation}
\mathcal K \approx \begin{pmatrix} 0.62 & 0 \\ 0 & 0.88 \end{pmatrix}.
\end{equation}
The slow, fast, and multiscale potential functions are represented in \cref{fig:TwoD_Potential}. The slow-scale potential presents two wells around the points $x_1$ and $x_2$, a local maximum in the origin, and diverges outside any ball large enough and centered in the origin. The superposition of the slow and fast-scale potentials (evaluated in $y = x/\epl$) perturbs the slow-scale potential and is responsible for an infinity of non-negligible local minima. We note that due to the local minima, the local maximum in the origin, and the two-dimensional setup, transitions between the potential wells are rare. This compromises the accuracy of the inference results, especially for the drift coefficient, unless final time is taken large enough. 

We set $T = 2\cdot 10^5$ and generate synthetic observations $X^\epl = (X^\epl(t), 0\leq t \leq T)$ by integrating \eqref{eq:SDE_MS_Lang} with the Euler--Maruyama method with time step $\Delta_t = \epl^3$. We then estimate the effective drift coefficients $\{A_i \in \R^{2\times 2}\}_{i=1}^{4}$ and the effective diffusion matrix $\Sigma \in \R^{2\times 2}$ employing data filtered with the moving average and exponential kernels, and with subsampling for a comparison. For the drift coefficient, we measure accuracy by computing the relative error on the $16$-dimensional vector obtained by stacking all coefficients of the four $2\times 2$ effective drift matrices. Numerical results, given in \cref{fig:TwoD_Drift,fig:TwoD_Diffusion}, demonstrate that
\begin{enumerate}[label=(\alph*)]
	\item \cref{fig:TwoD_Drift}: The drift estimator obtained with both filtering methodologies is extremely accurate, given the complexity of the setting and the high-dimensionality of the coefficient. In particular, for all values of $\delta \in [\epl,1]$ we obtain relative errors below $10\%$. Moreover, implementing both filtering methodologies with $\delta = 1$, i.e., when the scale-separation parameter is unknown, yields quasi-optimal results. We remark that the estimator $\widehat A^1_{\mathrm{ma}}$ obtained with the moving average kernel and $\delta = 1$ appears to converge sensibly faster with respect to $t \in [0, T]$ than the corresponding estimator $\widehat A^{1,1}_{\mathrm{exp}}$ obtained with the exponential kernel. Conversely, the relative error for subsampling is dramatically higher, and subsampling should not in our opinion be employed in this high-dimensional setting. Always commenting on \cref{fig:TwoD_Drift}, we note that the drift function estimated with both filtering methods at final time is visually almost indistinguishable from the exact effective drift function. 
	\item \cref{fig:TwoD_Diffusion}: Likewise the numerical experiments of the previous sections, the diffusion estimators $\widehat \Sigma$ obtained with both filtering kernels and subsampling is not robust with respect to the filtering width, with the moving average kernel that seems to perform slightly better than the exponential kernel for $\delta = \epl$, and than subsampling when $\delta = \epl^{3/2}$. The estimators $\widetilde \Sigma$ obtained with the two filtering methods are instead extremely accurate at identifying both the diagonal components -- especially $\Sigma_{22}$, and the zero off-diagonal elements. The subsampling-based estimator $\widetilde \Sigma_{\mathrm{sub}}$, instead, suffers from the lack of accuracy of the drift estimator $\widehat A^\delta_{\mathrm{sub}}$, and is not reliable. Always commenting on \cref{fig:TwoD_Diffusion}, we remark that convergence with respect to $t \in [0,T]$ of the estimators $\widetilde \Sigma^1_{\mathrm{ma}}$ and $\widetilde \Sigma^{1,1}_{\mathrm{exp}}$ is similar, with the moving average filter seemingly less prone to instabilities for small $t$.
\end{enumerate}

\section{Asymptotic Unbiasedness of the Estimators}\label{sec:Proof}

In this section we present the proof of \cref{thm:Drift,thm:Diffusion,thm:Diffusion_tilde,cor:Diffusion_tilde}, i.e., the results of asymptotic unbiasedness for our filtering-based estimators. In \cite{AGP21} the proofs of convergence are obtained with the kernel $k_{\mathrm{exp}}^{\delta,1}$ by noticing that the original trajectory $X^\epl$ and its filtered version $Z_{\mathrm{exp}}^{\delta,\beta,\epl}$ are solution of an hypoelliptic system of Itô SDEs. For higher values of $\beta > 1$, the system describing the evolution of $X^\epl$ and $Z_{\mathrm{exp}}^{\delta,\beta,\epl}$ is not a Itô system due to the presence of a memory term. In case we consider the moving average kernel $k_{\mathrm{ma}}^\delta$ which we study in this paper, the memory term simplifies to a constant delay. Hence, the evolution of the filtered trajectory $Z_{\mathrm{ma}}^{\delta,\epl}$ can be coupled with the original trajectory $X^\epl$ through the system of stochastic delay differential equations (SDDEs)
\begin{equation}\label{eq:System}
\begin{aligned}
\d X^\epl(t) &= -\alpha \cdot V'(X^\epl(t)) \dd t - \frac1\epl p'\left(\frac{X^\epl(t)}\epl\right) \dd t + \sqrt{2\sigma} \dd W(t), \\
\d Z_{\mathrm{ma}}^{\delta,\epl}(t) &= -\frac1\delta (X^\epl(t-\delta) - X^\epl(t)) \dd t.
\end{aligned}
\end{equation}
To be precise, the system above is a combination of a Itô SDE and a delasy ordinary differential equation driven by a stochastic signal. Due to the theory of homogenization (see \cite[Chapter 3]{BLP78}, or \cite[Chapter 18]{PaS08}, or the proof of \cite[Lemma 3.9]{AGP21}), if $\delta$ is independent of $\epl$, the solution $(X^\epl,Z_{\mathrm{ma}}^{\delta,\epl})$ converges in law as random variables in $\mathcal C^0([0,T],\R^2)$ to the solution $(X^0,Z_{\mathrm{ma}}^{\delta,0})$ of the system
\begin{equation}\label{eq:System_hom}
\begin{aligned}
\d X^0(t) &= -A \cdot V'(X^0(t)) \dd t + \sqrt{2\Sigma} \dd W(t), \\
\d Z_{\mathrm{ma}}^{\delta,0}(t) &= -\frac1\delta (X^0(t-\delta) - X^0(t)) \dd t.
\end{aligned}
\end{equation}
In the following, we first focus on ergodic properties of the couples $(X^\epl, Z_{\mathrm{ma}}^{\delta,\epl})$ and $(X^0, Z_{\mathrm{ma}}^{\delta,0})$ evolving according to \eqref{eq:System} and \eqref{eq:System_hom}, respectively. Then, we employ the invariant measures and the Fokker--Planck equations derived through the ergodicity theory to prove asymptotic unbiasedness. We remark that the strategy we adopt is similar to the one of \cite{AGP21}. Still, different techniques need to be employed due to the delay in the second equation of the systems \eqref{eq:System} and \eqref{eq:System_hom}.

\subsection{Ergodic Properties}	

It is well-known (see, e.g., \cite{PaS07,AGP21}) that $X^\epl$ is geometrically ergodic with invariant measure $\mu^\epl$ on $\R$, whose density $\rho^\epl$ satisfying $\mu^\epl(\d x) = \rho^\epl(x) \dd x$ takes the Gibbs form
\begin{equation}
\rho^\epl(x) = \frac1{C^\epl_\mu} \exp\left(-\frac{V_\epl(x)}{\sigma}\right), \quad C^\epl_\mu = \int_\R  \exp\left(-\frac{V_\epl(x)}{\sigma}\right) \dd x,
\end{equation}
where 
\begin{equation}
V_\epl(x) \defeq \alpha \cdot V(x) + p\left(\frac{x}{\epl}\right).
\end{equation}
Moreover, an analogous result holds true for the homogenized process $X^0$, which is geometrically ergodic with invariant measure $\mu^0$ on $\R$, whose density $\rho^0$ satisfying $\mu^0(\d x) = \rho^0(x) \dd x$ is given by
\begin{equation}
\rho^0(x) = \frac1{C^0_\mu} \exp\left(-\frac{A \cdot V(x)}{\Sigma}\right), \quad C^0_\mu = \int_\R  \exp\left(-\frac{A \cdot V(x)}{\Sigma}\right) \dd x.
\end{equation}

We now introduce a similar result of ergodicity for the couples $(X^\epl,Z_{\mathrm{ma}}^{\delta,\epl})$ and $(X^0,Z_{\mathrm{ma}}^{\delta,0})$ satisfying \eqref{eq:System} and \eqref{eq:System_hom}, respectively, i.e., for the multiscale process and its filtered version. 

\begin{proposition}\label{prop:FokkerPlanck} Under \cref{as:regularity}, the solution $(X^\epl, Z_{\mathrm{ma}}^{\delta,\epl})$ of \eqref{eq:System} is ergodic, and the density $\widetilde \rho^\epl$ of its invariant measure $\widetilde \mu^\epl$ on $\R^2$, such that $\widetilde \mu^\epl(\d x, \d z) = \widetilde \rho^\epl(x, z) \dd x \dd z$, satisfies
	\begin{equation} \label{eq:FokkerPlanck}
	\begin{aligned}
	&\sigma \partial_{xx}^2 \widetilde \rho^\epl(x, z) + \partial_x\left(V_\epl'(x) \widetilde \rho^\epl(x,z)\right) + \frac1\delta \partial_z \left(\left(\int_\R y \psi^\epl(y \mid x, z) \dd y - x\right)\widetilde \rho^\epl(x, z)\right) = 0,\\
	&\int_\R\int_\R \widetilde \rho^\epl(x, z) \dd x \dd z = 1,
	\end{aligned}
	\end{equation}
	where, if $X^\epl(0) \sim \mu^\epl$, it holds
	\begin{equation}
	\int_\R y \psi^\epl(y \mid x, z) \dd y = \E\left[X^\epl(0) \mid X^\epl(\delta)=x, Z_{\mathrm{ma}}^{\delta,\epl}(\delta) = z\right],
	\end{equation}
	i.e., $\psi^\epl(\cdot \mid x,z)$ is the conditional density of $X^\epl(0)$ given $X^\epl(\delta)=x$ and $Z_{\mathrm{ma}}^{\delta,\epl}(\delta)=z$. Moreover, if $\delta$ is independent of $\epl$, the solution $(X^0,Z_{\mathrm{ma}}^{\delta,0})$ of \eqref{eq:System_hom} is ergodic, and the density $\widetilde \rho^0$ of its invariant measure $\widetilde \mu^0$ on $\R^2$, such that $\widetilde \mu^0(\d x, \d z) = \widetilde \rho^0(x, z) \dd x \dd z$, satisfies
	\begin{equation} \label{eq:FokkerPlanck_hom}
	\begin{aligned}
	&\Sigma \partial_{xx}^2 \widetilde \rho^0(x, z) + \partial_x\left(A \cdot V'(x) \widetilde \rho^0(x,z)\right) + \frac1\delta \partial_z \left(\left(\int_\R y \psi^0(y \mid x, z) \dd y - x\right)\widetilde \rho^0(x, z)\right) = 0,\\
	&\int_\R\int_\R \widetilde \rho^0(x, z) \dd x \dd z = 1,
	\end{aligned}
	\end{equation}
	where, if $X^0(0) \sim \mu^0$, it holds
	\begin{equation}
	\int_\R y \psi^0(y \mid x, z) \dd y = \E\left[X^0(0) \mid X^0(\delta)=x, Z_{\mathrm{ma}}^{\delta,0}(\delta) = z\right],
	\end{equation}
	i.e., $\psi^0(\cdot \mid x,z)$ is the conditional density of $X^0(0)$ given $X^0(\delta)=x$ and $Z_{\mathrm{ma}}^{\delta,0}(\delta)=z$.
\end{proposition}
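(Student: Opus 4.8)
The plan is to treat the delay system \eqref{eq:System} as a Markov process on the segment space $\mathcal C([-\delta, 0], \R)$ -- on which the pair $(X^\epl(t), Z_{\mathrm{ma}}^{\delta,\epl}(t))$ is a measurable functional -- establish ergodicity there, and then push the resulting stationary law forward to obtain $\widetilde \mu^\epl$ on $\R^2$. First I would record that, since $p$ is smooth and $L$-periodic by \cref{as:regularity}\ref{as:periodicity}, the fast drift $\epl^{-1} p'(\cdot/\epl)$ is bounded, so the drift of the $X^\epl$-equation is a dissipative field $-\alpha \cdot V'$ (by \cref{as:regularity}\ref{as:regularity_diss}) perturbed by a bounded term. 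This yields a Lyapunov function of the form $\Lambda(x) = 1 + x^2$ satisfying a uniform drift condition, which together with the nondegeneracy of the additive noise in the $X$-component gives a minorisation for the segment-valued semigroup. Invoking the ergodic theory for stochastic delay differential equations (in the spirit of Mohammed and of Bakhtin--Mattingly), I would conclude that the segment process admits a unique invariant measure and is geometrically ergodic; projecting onto the endpoint and the sliding average then yields the invariant law $\widetilde \mu^\epl$ of $(X^\epl, Z_{\mathrm{ma}}^{\delta,\epl})$.

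Next I would derive the stationary Fokker--Planck equation \eqref{eq:FokkerPlanck}. Realise the stationary solution as a two-sided stationary process and fix a test function $f \in \mathcal C_c^\infty(\R^2)$. Applying Itô's formula to $f(X^\epl(t), Z_{\mathrm{ma}}^{\delta,\epl}(t))$, using $\d X^\epl = -V_\epl'(X^\epl) \dd t + \sqrt{2\sigma} \dd W$ together with the bounded-variation evolution $\d Z_{\mathrm{ma}}^{\delta,\epl} = -\delta^{-1}(X^\epl(t-\delta) - X^\epl(t)) \dd t$, and taking expectations under the invariant law, the martingale term vanishes and stationarity forces $\frac{\d}{\d t}\E[f] = 0$. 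This leaves an identity whose only non-local contribution is $\E[\partial_z f \cdot X^\epl(t-\delta)]$. The \emph{crucial step} is to treat this term by conditioning on the current state: by the tower property it equals $\E[\partial_z f \cdot \E[X^\epl(t-\delta) \mid X^\epl(t), Z_{\mathrm{ma}}^{\delta,\epl}(t)]]$, and two-sided stationarity identifies the inner conditional expectation with $\int_\R y \psi^\epl(y \mid x, z) \dd y = \E[X^\epl(0) \mid X^\epl(\delta)=x, Z_{\mathrm{ma}}^{\delta,\epl}(\delta)=z]$ evaluated at the current state. Rewriting the resulting expectations as integrals against $\widetilde \rho^\epl$ and integrating by parts once in $x$ (for both the $\sigma \partial_{xx}^2$ and the $V_\epl'$ terms) and once in $z$ produces the weak form of \eqref{eq:FokkerPlanck}; smoothness of $\widetilde \rho^\epl$, hence the classical form, follows from the regularity of the $X^\epl$-dynamics propagated to the $z$-coordinate through the coupling drift.

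The \emph{main obstacle} is the ergodicity step: because of the delay the genuine Markov state is the infinite-dimensional segment, so standard finite-dimensional Harris or Hörmander arguments do not apply verbatim, and one must verify the drift-and-minorisation hypotheses of an SDDE ergodic theorem. The boundedness of the fast drift and the dissipativity of the slow potential are precisely what make the Lyapunov estimate hold, and some care is needed to control the sliding-average coordinate $Z_{\mathrm{ma}}^{\delta,\epl}$ by the same Lyapunov function. A secondary technical point is justifying the conditioning identity rigorously: this requires realising the stationary process as a two-sided stationary process so that $X^\epl(t-\delta)$ and the conditional density $\psi^\epl$ are well defined, and that $\psi^\epl$ carries enough integrability for the integration by parts; the non-local coefficient $\int_\R y \psi^\epl \dd y$ also makes the passage from the weak to the classical form delicate.

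The homogenized case \eqref{eq:System_hom}--\eqref{eq:FokkerPlanck_hom} follows by the identical argument with $(\alpha \cdot V', \sigma)$ replaced by $(A \cdot V', \Sigma)$: the additive-noise nondegeneracy and the dissipativity needed for the Lyapunov estimate are inherited from \cref{as:regularity} and the homogenization structure (with $A_i = \alpha_i \mathcal K$, $\Sigma = \sigma \mathcal K$), and the derivation of \eqref{eq:FokkerPlanck_hom} is verbatim. The convergence in law of $(X^\epl, Z_{\mathrm{ma}}^{\delta,\epl})$ to $(X^0, Z_{\mathrm{ma}}^{\delta,0})$ noted before the statement then exhibits the two stationary problems as limits of one another, which is exactly what the subsequent unbiasedness proofs will exploit.
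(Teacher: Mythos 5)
Your proposal is correct in outline, but it reaches ergodicity by a genuinely different --- and much heavier --- route than the paper. The paper never passes through segment-space Harris theory: it observes that, because the delay never feeds back into the $X^\epl$-equation, the pair $\left(X^\epl(t), Z_{\mathrm{ma}}^{\delta,\epl}(t)\right) = \left(X^\epl(t), \frac1\delta\int_{t-\delta}^t X^\epl(s)\dd s\right)$ is a fixed deterministic functional of the autonomously ergodic path $X^\epl$; hence if $X^\epl(0)\sim\mu^\epl$ the processes $(X^\epl(s), 0\le s\le\delta)$ and $(X^\epl(s), t-\delta\le s\le t)$ are equal in law for every $t\ge\delta$, which exhibits the invariant measure $\widetilde\mu^\epl$ explicitly as the law of $\left(X^\epl(\delta), \frac1\delta\int_0^\delta X^\epl(s)\dd s\right)$, and uniqueness is inherited directly from uniqueness of $\mu^\epl$ since $\widetilde\mu^\epl$ is determined by that marginal. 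This two-line stationarity argument sidesteps exactly what you flag as your main obstacle: verifying drift-and-minorisation hypotheses for an infinite-dimensional segment process, where minorisation is notoriously delicate; your verification would ultimately succeed only because of the same structural fact (the $X^\epl$-equation is an autonomous nondegenerate diffusion, unaffected by $Z$) that the paper exploits directly, so the abstract SDDE machinery is dispensable here. For the Fokker--Planck equation the roles are reversed: the paper simply cites the literature on the one-time PDF of an SDDE with a single fixed delay and states that \eqref{eq:FokkerPlanck} follows from the particular form of \eqref{eq:System}, whereas you derive it self-containedly via It\^o's formula under stationarity and the tower-property treatment of the delayed term $\E\left[\partial_z f \cdot X^\epl(t-\delta)\right]$; this conditioning step is precisely the mechanism behind the cited results, and your derivation is sound (modulo the sketch-level slip that the $\sigma\partial_{xx}^2 f$ term needs two integrations by parts in $x$, and the weak-to-classical regularity upgrade you rightly flag as nontrivial, which the paper's citation-based argument also leaves implicit). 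In short, your approach buys generality --- it would survive delays feeding back into the dynamics --- and a self-contained FP derivation, at the cost of invoking infinite-dimensional ergodic theory where the paper's elementary observation suffices.
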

\begin{proof}
	In order to prove that the joint process $(X^\epl, Z_{\mathrm{ma}}^{\delta,\epl})$ is ergodic, we show that it admits a unique invariant measure. If $X^\epl(0)$ is distributed accordingly to its invariant measure $\mu^\epl$, which exists due to \cref{as:regularity}, then the processes $(X^\epl(s), 0 \le s \le \delta)$ and $(X^\epl(s), t - \delta \le s \le t)$ are equally distributed for all $t \ge \delta$. Hence, the two-dimensional random variables $\left( X^\epl(\delta), \frac1\delta \int_0^\delta X^\epl(s) \dd s \right)$ and $\left( X^\epl(t), \frac1\delta \int_{t-\delta}^t X^\epl(s) \dd s \right)$ are equal in law for all $t \ge \delta$. Recalling that the joint process $\left( X^\epl(t), \frac1\delta \int_{t-\delta}^t X^\epl(s) \dd s \right)$ is the solution $(X^\epl(t), Z_{\mathrm{ma}}^{\delta,\epl}(t))$ of the system \eqref{eq:System}, it follows that the invariant measure $\widetilde \mu^\epl$ on $\R^2$ is the law of the random variable $\left( X^\epl(\delta), \frac1\delta \int_0^\delta X^\epl(s) \dd s \right)$. The uniqueness of the invariant measure $\widetilde \mu^\epl$ is then a direct consequence of the uniqueness of the invariant measure $\mu^\epl$ for the process $X^\epl$ since the joint measure $\widetilde \mu^\epl$ is uniquely determined by its marginal $\mu^\epl$. Moreover, the Fokker--Planck equation for the one-time PDF related to an SDDE with a single fixed delay is well-known (see, e.g., \cite{GLL99, Fra05, LoK19}) and the stationary equation \eqref{eq:FokkerPlanck} for the density $\widetilde \rho^\epl$ of $\widetilde \mu^\epl$ is then obtained due to the particular form of the system \eqref{eq:System}. Finally, the results corresponding to the homogenized system \eqref{eq:System_hom} can be proved analogously.
\end{proof}

The following formulas, which will be employed in the proof of the main results, are then direct consequences of the Fokker--Planck equations obtained above.

\begin{lemma}\label{lem:Magic} Let $\widetilde \rho^\epl(x, z) = \rho^\epl(x) \varphi^\epl(z \mid x)$, where $\rho^\epl$ and $\widetilde \rho^\epl$ are the densities of the invariant measures $\mu^\epl$ and $\widetilde \mu^\epl$ of $X^\epl$ and $(X^\epl, Z_{\mathrm{ma}}^{\delta,\epl})$, respectively, and where $\varphi^\epl$ is the conditional density of $Z_{\mathrm{ma}}^{\delta,\epl}$ given $X^\epl$. Then, if $X^\epl(0) \sim \mu^\epl$, it holds
	\begin{equation} \label{eq:magic}
	\sigma \int_\R\int_\R  V'(z) \rho^\epl(x) \partial_x \varphi^\epl(z \mid x) \dd x \dd z = \frac1\delta \E^{\widetilde \mu^\epl}\left[\left(X^\epl(\delta) - Z_{\mathrm{ma}}^{\delta,\epl}(\delta)\right) \left(X^\epl(\delta) - X^\epl(0\right)) V''(Z_{\mathrm{ma}}^{\delta,\epl}(\delta))\right].
	\end{equation}
	Moreover, if $\delta$ is independent of $\epl$ and writing $\widetilde \rho^0(x, z) = \rho^0(x)\varphi^0(z \mid x)$ for the density of the homogenized invariant measure $\widetilde \mu^0$ of $(X^0, Z_{\mathrm{ma}}^{\delta,0})$, it holds
	\begin{equation} \label{eq:magic_hom}
	\Sigma \int_\R\int_\R  V'(z) \rho^0(x) \partial_x \varphi^0(z \mid x) \dd x \dd z = \frac1\delta \E^{\widetilde \mu^0}\left[\left(X^0(\delta) - Z_{\mathrm{ma}}^{\delta,0}(\delta)\right) \left(X^0(\delta) - X^0(0)\right) V''(Z_{\mathrm{ma}}^{\delta,0}(\delta))\right].
	\end{equation}
\end{lemma}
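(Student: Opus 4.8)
The plan is to read both sides of~\eqref{eq:magic} off the stationary Fokker--Planck equation~\eqref{eq:FokkerPlanck} by testing it against two judiciously chosen functions. Writing $\mathcal L^\epl$ for the operator appearing in~\eqref{eq:FokkerPlanck}, the stationarity $\mathcal L^\epl\widetilde\rho^\epl = 0$ means that for any sufficiently regular $g(x,z)$ that is integrable together with its derivatives against $\widetilde\rho^\epl$,
\begin{equation}
\int_\R\int_\R \left(\sigma \partial_{xx}^2 g - V_\epl'(x)\partial_x g - \frac1\delta\left(\int_\R y\,\psi^\epl(y\mid x,z)\dd y - x\right)\partial_z g\right)\widetilde\rho^\epl(x,z)\dd x\dd z = 0,
\end{equation}
obtained by multiplying~\eqref{eq:FokkerPlanck} by $g$ and integrating by parts twice in $x$ and once in $z$, discarding boundary terms. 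I would apply this first with $g(x,z) = V(z)$, which only activates the $z$-drift and yields, after using the tower property together with the identification of $\int_\R y\,\psi^\epl(y\mid x,z)\dd y$ with $\E[X^\epl(0)\mid X^\epl(\delta)=x,\, Z_{\mathrm{ma}}^{\delta,\epl}(\delta)=z]$ from \cref{prop:FokkerPlanck}, the auxiliary identity $\E^{\widetilde\mu^\epl}[(X^\epl(0) - X^\epl(\delta))V'(Z_{\mathrm{ma}}^{\delta,\epl}(\delta))] = 0$.

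Next I would choose $g(x,z) = (x-z)V'(z)$, for which $\partial_x g = V'(z)$, $\partial_{xx}^2 g = 0$, and $\partial_z g = -V'(z) + (x-z)V''(z)$. Substituting into the test-function identity, the second-order term vanishes and the contribution proportional to $V'(z)$ is killed by the auxiliary identity from the previous step, leaving
\begin{equation}
\int_\R\int_\R V_\epl'(x) V'(z)\,\widetilde\rho^\epl(x,z)\dd x\dd z = \frac1\delta\,\E^{\widetilde\mu^\epl}\!\left[\left(X^\epl(\delta) - X^\epl(0)\right)\left(X^\epl(\delta) - Z_{\mathrm{ma}}^{\delta,\epl}(\delta)\right)V''(Z_{\mathrm{ma}}^{\delta,\epl}(\delta))\right],
\end{equation}
where the last step again uses the tower property to replace $\int_\R y\,\psi^\epl(y\mid x,z)\dd y - x$ by $\E[X^\epl(0) - X^\epl(\delta)\mid\cdot]$. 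The right-hand side here is exactly the right-hand side of~\eqref{eq:magic}, so it only remains to match the left-hand sides.

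Finally I would rewrite the left-hand side of~\eqref{eq:magic} using $\widetilde\rho^\epl = \rho^\epl\varphi^\epl$ and the Gibbs form of the marginal density, which gives $\partial_x\rho^\epl = -\frac1\sigma V_\epl'\rho^\epl$. Integrating by parts in $x$ (boundary terms vanishing by decay) and using $\int_\R \partial_x\widetilde\rho^\epl\dd x = 0$ gives
\begin{equation}
\sigma\int_\R\int_\R V'(z)\rho^\epl(x)\partial_x\varphi^\epl(z\mid x)\dd x\dd z = \int_\R\int_\R V'(z) V_\epl'(x)\,\widetilde\rho^\epl(x,z)\dd x\dd z,
\end{equation}
which is precisely the left-hand side of the display in the previous paragraph; chaining the two equalities proves~\eqref{eq:magic}. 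The homogenized identity~\eqref{eq:magic_hom} follows verbatim from~\eqref{eq:FokkerPlanck_hom} with $\sigma$, $V_\epl'$, $\rho^\epl$ replaced by $\Sigma$, $A\cdot V'$, $\rho^0$, valid when $\delta$ is independent of $\epl$ so that \cref{prop:FokkerPlanck} applies to $(X^0, Z_{\mathrm{ma}}^{\delta,0})$. I expect the main obstacle to be purely technical: justifying that the boundary terms in the integrations by parts vanish and that the polynomially growing test functions are genuinely integrable against $\widetilde\rho^\epl$, given that the $z$-drift $\int_\R y\,\psi^\epl(y\mid x,z)\dd y - x$ is only available implicitly as a conditional expectation. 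This should follow from the dissipativity in \cref{as:regularity}\ref{as:regularity_diss}, the polynomial bounds on $V'$ and $V''$, and the resulting fast decay of $\widetilde\rho^\epl$, combined with careful bookkeeping of the conditioning supplied by \cref{prop:FokkerPlanck}.
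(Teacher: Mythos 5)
Your proof is correct and takes essentially the same route as the paper's: both read \eqref{eq:magic} off the stationary Fokker--Planck equation of \cref{prop:FokkerPlanck} by testing it against polynomial-type functions and using the tower property to turn the conditional-mean drift into the two-point expectation appearing on the right-hand side. The only difference is cosmetic: the paper substitutes the Gibbs decomposition $\widetilde\rho^\epl = \rho^\epl\varphi^\epl$ into the equation first and then tests against the single function $f(x,z) = (x-z)V'(z) + V(z)$, whereas you split this $f$ into the two test functions $V(z)$ and $(x-z)V'(z)$ and defer the Gibbs-form manipulation $\sigma\partial_x\rho^\epl = -V_\epl'\rho^\epl$ to the final step.
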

\begin{proof}
	We proceed similarly to the proof of \cite[Lemma 3.5]{AGP21}. Replacing the decomposition $\widetilde \rho^\epl(x, z) = \rho^\epl(x) \varphi^\epl(z \mid x)$ into the Fokker--Planck equation \eqref{eq:FokkerPlanck} gives
	\begin{equation}
	\partial_x \left( \sigma \rho^\epl(x) \partial_x \varphi^\epl(z \mid x) \right) + \partial_z \left( \frac1\delta \left( \int_\R y \psi^\epl(y \mid x,z) \dd y - x \right) \rho^\epl(x) \varphi^\epl(z \mid x) \right) = 0.
	\end{equation}
	We then multiply the equation above by a smooth function $f \colon \R^2 \to \R^N$, $f = f(x,z)$, and integrate first with respect to $x$ and $z$ and then by parts, obtaining
	\begin{equation}
	\sigma \int_\R \int_\R \partial_x f(x,z) \rho^\epl(x) \partial_x \varphi^\epl(z \mid x) \dd x \dd z = \frac1\delta \E^{\widetilde \mu^\epl} \left[ \partial_z f(X^\epl(\delta), Z_{\mathrm{ma}}^{\delta,\epl}(\delta))(X^\epl(\delta) - X^\epl(0)) \right].
	\end{equation}
	The choice $f(x,z) = (x - z)V'(z) + V(z)$ gives equation \eqref{eq:magic}. Finally, equation \eqref{eq:magic_hom} is obtained analogously employing the Fokker--Planck equation of the homogenized SDE \eqref{eq:FokkerPlanck_hom}.
\end{proof}

\subsection{Preliminary Results}

Let us first introduce the notation
\begin{equation}
\begin{aligned}
&\widetilde{\mathcal M}_\epl \defeq \E^{\widetilde \mu^\epl}\left[V'(Z_{\mathrm{ma}}^{\delta,\epl}) \otimes V'(X^\epl)\right], \qquad &&\widetilde{\mathcal M}_0 \defeq \E^{\widetilde \mu^0}\left[V'(Z_{\mathrm{ma}}^{\delta,0}) \otimes V'(X^0)\right], \\
&\mathcal M_\epl \defeq \E^{\mu^\epl}\left[V'(X^\epl) \otimes V'(X^\epl)\right],  &&\mathcal M_0 \defeq \E^{\mu^0}\left[V'(X^0) \otimes V'(X^0)\right],
\end{aligned}
\end{equation}
which is repeatedly employed below. Before presenting the main proofs, we introduce two auxiliary lemmas.

\begin{lemma} \label{lem:distance_XZ} 
	Under \cref{as:regularity}, it holds
	\begin{equation}\label{eq:difference_XZ}
	X^\epl(\delta) - Z_{\mathrm{ma}}^{\delta,\epl}(\delta) = \frac{\sqrt{2\sigma}}{\delta} \int_0^\delta t (1 + \Phi'(Y^\epl(t))) \dd W(t) + R(\epl,\delta),
	\end{equation}
	where $\Phi$ is the solution of the cell problem \eqref{eq:Cell} and where the remainder $R(\epl, \delta)$ satisfies for all $p\geq 1$ and a constant $C > 0$ independent of $\epl$ and $\delta$
	\begin{equation}\label{eq:difference_XZ_remainder}
	\E^{\widetilde \mu^\epl} \left[ \abs{R(\epl,\delta)}^p \right]^{1/p} \le C \left( \epl + \delta \right).
	\end{equation}
	Moreover, if $X^\epl(0)$ is stationary, i.e. $X^\epl(0) \sim \mu^\epl$, it holds
	\begin{align}
		&\E^{\widetilde \mu^\epl}\left[\abs{X^\epl - Z_{\mathrm{ma}}^{\delta,\epl}}^p\right]^{1/p} \leq C \left( \delta^{1/2} + \epl\right), \label{eq:bound_XZ}\\
		&\E^{\widetilde \mu^\epl}\left[\abs{Z_{\mathrm{ma}}^{\delta,\epl}}^p\right]^{1/p} \leq C. \label{eq:bound_Z}
	\end{align}
\end{lemma}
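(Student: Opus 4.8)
The starting point is the exact identity
\begin{equation}
X^\epl(\delta) - Z_{\mathrm{ma}}^{\delta,\epl}(\delta) = \frac1\delta \int_0^\delta \left(X^\epl(\delta) - X^\epl(s)\right) \dd s = \frac1\delta \int_0^\delta u \dd X^\epl(u),
\end{equation}
which follows from $Z_{\mathrm{ma}}^{\delta,\epl}(\delta) = \frac1\delta\int_0^\delta X^\epl(s)\dd s$ and an integration by parts (a stochastic Fubini argument on the martingale part). First I would substitute the dynamics \eqref{eq:SDE_MS_Lang_d1} for $\dd X^\epl(u)$; the slow drift and the Brownian term are harmless, but the stiff contribution $-\frac1{\delta\epl}\int_0^\delta u\, p'(X^\epl(u)/\epl)\dd u$ is of order $\OO(\epl^{-1})$ and must be tamed by homogenization.

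The key step is to absorb this stiff term using the corrector $\Phi$. Setting $g(x) \defeq \epl\,\Phi(x/\epl)$ and applying Itô's formula together with the one-dimensional cell problem $\sigma\Phi'' - p'\Phi' = p'$ from \eqref{eq:Cell} yields
\begin{equation}
\frac1\epl p'\!\left(\frac{X^\epl(t)}{\epl}\right)\dd t = \dd g(X^\epl(t)) + \alpha\cdot V'(X^\epl(t))\,\Phi'(Y^\epl(t))\dd t - \sqrt{2\sigma}\,\Phi'(Y^\epl(t))\dd W(t),
\end{equation}
where $Y^\epl = X^\epl/\epl$. Inserting this into the SDE recasts the dynamics as $\dd X^\epl = -\alpha\cdot V'(X^\epl)(1+\Phi'(Y^\epl))\dd t - \dd g(X^\epl) + \sqrt{2\sigma}(1+\Phi'(Y^\epl))\dd W$, and substituting into $\frac1\delta\int_0^\delta u\,\dd X^\epl(u)$ produces precisely the leading martingale $\frac{\sqrt{2\sigma}}{\delta}\int_0^\delta t(1+\Phi'(Y^\epl(t)))\dd W(t)$ of \eqref{eq:difference_XZ}, with remainder
\begin{equation}
R(\epl,\delta) = -\frac1\delta\int_0^\delta u\,\alpha\cdot V'(X^\epl(u))(1+\Phi'(Y^\epl(u)))\dd u - \frac1\delta\int_0^\delta u\dd g(X^\epl(u)).
\end{equation}
I expect this step --- correctly isolating the $\OO(\epl^{-1})$ term through the corrector --- to be the main obstacle, since a naive estimate would diverge.

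For the remainder bound \eqref{eq:difference_XZ_remainder} I would treat the two pieces separately. In the first integral $u/\delta\le 1$ and the integrand lies in every $L^p(\widetilde\mu^\epl)$ uniformly in $\epl$ (by boundedness of the smooth periodic $\Phi'$, the polynomial growth of $V'$, and the uniform-in-$\epl$ stationary moment bounds on $X^\epl$ coming from the dissipativity in \cref{as:regularity}\ref{as:regularity_diss}), so it is $\OO(\delta)$. For the second integral I integrate by parts, $\frac1\delta\int_0^\delta u\,\dd g(X^\epl(u)) = g(X^\epl(\delta)) - \frac1\delta\int_0^\delta g(X^\epl(u))\dd u$, and use $\abs{g(x)} = \epl\abs{\Phi(x/\epl)}\le C\epl$ by periodicity of $\Phi$ to get an $\OO(\epl)$ bound. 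Together these give $\E^{\widetilde\mu^\epl}[\abs{R(\epl,\delta)}^p]^{1/p}\le C(\epl+\delta)$.

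Finally, \eqref{eq:bound_XZ} follows by bounding the leading martingale in \eqref{eq:difference_XZ}: its quadratic variation $\frac{2\sigma}{\delta^2}\int_0^\delta t^2(1+\Phi'(Y^\epl(t)))^2\dd t$ is at most $C\delta$ since $\Phi'$ is bounded, so the Burkholder--Davis--Gundy inequality yields an $\OO(\delta^{1/2})$ contribution in $L^p$, which dominates the $\OO(\delta)$ part of $R$ and combines with its $\OO(\epl)$ part to give the stated $C(\delta^{1/2}+\epl)$. The bound \eqref{eq:bound_Z} is the simplest: writing $Z_{\mathrm{ma}}^{\delta,\epl}(\delta) = \frac1\delta\int_0^\delta X^\epl(s)\dd s$ and applying Minkowski's integral inequality reduces it to $\sup_s \E^{\mu^\epl}[\abs{X^\epl(s)}^p]^{1/p}\le C$, again uniform in $\epl$ by the stationary moment bounds.
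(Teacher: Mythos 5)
Your proposal is correct and follows essentially the same route as the paper: both arguments reduce $X^\epl(\delta)-Z_{\mathrm{ma}}^{\delta,\epl}(\delta)$, via a stochastic Fubini/integration-by-parts step, to the weighted martingale $\frac{\sqrt{2\sigma}}{\delta}\int_0^\delta t\left(1+\Phi'(Y^\epl(t))\right)\dd W(t)$ plus a remainder consisting of an $\OO(\delta)$ drift term and an $\OO(\epl)$ corrector boundary term, and then conclude with the same moment estimates (Burkholder--Davis--Gundy/It\^o isometry for \eqref{eq:bound_XZ}, uniform-in-$\epl$ stationary moments for \eqref{eq:bound_Z}). The only difference is that where the paper imports the corrector decomposition as a black box from \cite{PaS07} (their decomposition (5.8) together with Lemma 5.5 and Proposition 5.8), you re-derive it inline by applying It\^o's formula to $\epl\,\Phi(\cdot/\epl)$ and invoking the cell problem \eqref{eq:Cell}, which makes your argument self-contained but mathematically identical.
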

\begin{proof} Employing the decomposition (5.8) in \cite{PaS07} and due to \cite[Lemma 5.5, Propsition 5.8]{PaS07} we have for all $t \in [0,\delta]$
	\begin{equation} \label{eq:decompositon_X}
	X^\epl(\delta) = X^\epl(t) + \sqrt{2\sigma} \int_t^\delta (1 + \Phi'(Y^\epl(s))) \dd W(s) + R(\epl,\delta),
	\end{equation}
	where the remainder satisfies for all $p \ge 1$ and for a constant $C > 0$ independent of $\epl$ and $\delta$
	\begin{equation} \label{eq:bound_remainder}
	\E^{\widetilde \mu^\epl} \left[ \abs{R(\epl,\delta)}^p \right]^{1/p} \le C \left( \epl + \delta \right).
	\end{equation}	
	Therefore, we obtain 
	\begin{equation}
	X^\epl(\delta) - Z_{\mathrm{ma}}^{\delta,\epl}(\delta) = \frac1\delta \int_0^\delta (X^\epl(\delta) - X^\epl(t)) \dd t = \frac{\sqrt{2\sigma}}{\delta} \int_0^\delta \int_t^\delta (1 + \Phi'(Y^\epl(s))) \dd W(s) \dd t + R(\epl,\delta),
	\end{equation}
	which by Fubini's theorem yields
	\begin{equation}
	X^\epl(\delta) - Z_{\mathrm{ma}}^{\delta,\epl}(\delta) = \frac{\sqrt{2\sigma}}{\delta} \int_0^\delta t (1 + \Phi'(Y^\epl(t))) \dd W(t) + R(\epl,\delta),
	\end{equation}
	and proves \eqref{eq:difference_XZ} and \eqref{eq:difference_XZ_remainder}. By the Itô isometry, it holds
	\begin{equation} \label{eq:bound_mart_2}
	\E^{\widetilde \mu^\epl} \left[ \abs{\int_0^\delta t (1 + \Phi'(Y^\epl(t))) \dd W(t)}^p \right]^{1/p} \le C \delta^{3/2},
	\end{equation}
	which, together with \eqref{eq:difference_XZ}, \eqref{eq:difference_XZ_remainder} and the proof of \cref{prop:FokkerPlanck}, gives \eqref{eq:bound_XZ}. Finally, \eqref{eq:bound_Z} is proved by applying the triangle inequality and due to \eqref{eq:bound_XZ} and \cite[Corollary 5.4]{PaS07}.
\end{proof}

\begin{lemma} \label{lem:distance_MeplM0} 
	Under \cref{as:regularity} and if $\delta = \epl^\zeta$ with $\zeta \in (0,2)$, then it holds
	\begin{equation}
	\lim_{\epl \to 0} \widetilde{\mathcal M}_\epl = \mathcal M_0.
	\end{equation}
\end{lemma}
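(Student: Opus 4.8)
The plan is to compare $\widetilde{\mathcal M}_\epl$ with the unfiltered quantity $\mathcal M_\epl$ and then invoke periodic homogenization for the invariant measures. By the triangle inequality it suffices to show that both $\norm{\widetilde{\mathcal M}_\epl - \mathcal M_\epl} \to 0$ and $\norm{\mathcal M_\epl - \mathcal M_0} \to 0$ as $\epl \to 0$.

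For the first difference I would write
\begin{equation}
\widetilde{\mathcal M}_\epl - \mathcal M_\epl = \E^{\widetilde \mu^\epl}\left[\left(V'(Z_{\mathrm{ma}}^{\delta,\epl}) - V'(X^\epl)\right) \otimes V'(X^\epl)\right],
\end{equation}
and estimate it using the Lipschitz continuity of $V'$ from \cref{as:regularity}\ref{as:regularity_Lip} together with the Cauchy--Schwarz inequality, obtaining a bound of the form
\begin{equation}
\norm{\widetilde{\mathcal M}_\epl - \mathcal M_\epl} \le C\, \E^{\widetilde \mu^\epl}\left[\abs{Z_{\mathrm{ma}}^{\delta,\epl} - X^\epl}^2\right]^{1/2} \E^{\mu^\epl}\left[\norm{V'(X^\epl)}^2\right]^{1/2}.
\end{equation}
The first factor is controlled by \eqref{eq:bound_XZ} in \cref{lem:distance_XZ}, which gives a bound of order $\delta^{1/2} + \epl$; since $\delta = \epl^\zeta \to 0$ as $\epl \to 0$, this vanishes in the limit. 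The second factor is bounded uniformly in $\epl$ because $V'$ is polynomially bounded by \cref{as:regularity}\ref{as:regularity_Lip} and the moments of $X^\epl$ under $\mu^\epl$ are bounded uniformly in $\epl$ by \cite[Corollary 5.4]{PaS07}. Hence the first difference tends to zero.

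For the second difference I would rely on homogenization of the invariant measures. The density $\rho^\epl \propto \exp(-(\alpha \cdot V(x) + p(x/\epl))/\sigma)$ converges weakly, as $\epl \to 0$, to a density proportional to $\exp(-\alpha \cdot V(x)/\sigma)$, since the rapidly oscillating factor $\exp(-p(x/\epl)/\sigma)$ averages to the constant $L^{-1} \int_0^L \exp(-p(y)/\sigma) \dd y$, which cancels between the numerator and the normalizing constant. Because $A = \alpha \mathcal K$ and $\Sigma = \sigma \mathcal K$ with $\mathcal K$ a scalar in the one-dimensional case, we have $A/\Sigma = \alpha/\sigma$, so this weak limit coincides exactly with the homogenized density $\rho^0$. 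Thus $\mu^\epl \rightharpoonup \mu^0$, and applying this to the integrand $V' \otimes V'$ yields $\mathcal M_\epl \to \mathcal M_0$.

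The main obstacle will be this second step, namely passing to the limit through the \emph{unbounded} integrand $V' \otimes V'$. Weak convergence of $\mu^\epl$ alone only handles bounded continuous test functions, so the crux is to establish uniform integrability, i.e.\ uniform-in-$\epl$ control of higher moments of $V'(X^\epl)$ under $\mu^\epl$, in order to upgrade weak convergence to convergence of the relevant second moments. This is precisely where the dissipativity and polynomial-growth hypotheses of \cref{as:regularity} enter, again through the uniform moment estimates for the multiscale invariant measure borrowed from \cite{PaS07}.
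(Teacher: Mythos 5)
Your proof is correct and takes essentially the same route as the paper's: the identical triangle-inequality splitting into $\norm{\widetilde{\mathcal M}_\epl - \mathcal M_\epl} + \norm{\mathcal M_\epl - \mathcal M_0}$, with the first term handled via the Lipschitz continuity of $V'$, the bound \eqref{eq:bound_XZ} of \cref{lem:distance_XZ}, and the uniform moment estimates of \cite[Corollary 5.4]{PaS07}, and the second term by homogenization of the invariant measures. The paper's proof is simply a terser version of yours; your explicit averaging argument for $\rho^\epl \rightharpoonup \rho^0$ and your remark on uniform integrability correctly supply the details the paper compresses into the phrase ``theory of homogenization.''
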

\begin{proof} By the triangle inequality, we have
	\begin{equation}
	\norm{\widetilde{\mathcal M}_\epl - \mathcal M_0} \leq \norm{\widetilde{\mathcal M}_\epl - \mathcal M_\epl} + \norm{\mathcal M_\epl - \mathcal M_0}.
	\end{equation}
	The first term vanishes as $\epl \to 0$ due to \cref{lem:distance_XZ}, \cite[Corollary 5.4]{PaS07} and since $V'$ is Lipschitz under \cref{as:regularity}. The second term vanishes due to the theory of homogenization as $\epl\to 0$.
\end{proof}

\subsection{Proof of the Main Results}

We can now prove our main results, i.e., \cref{thm:Drift,thm:Diffusion,thm:Diffusion_tilde,cor:Diffusion_tilde}.

\begin{proof}[Proof of \cref{thm:Drift}]
	Following the proof of \cite[Theorem 3.12]{AGP21}, we have
	\begin{equation}
	\widehat A_{\mathrm{ma}}^\delta(X^\epl, T) = \alpha + I_1 - I_2,
	\end{equation}
	where
	\begin{equation}
	\begin{aligned}
	I_1 &= \frac1T M_{\mathrm{ma}}^\delta(X^\epl,T)^{-1}\int_0^T \frac1\epl p'\left(\frac{X^\epl(t)}{\epl}\right) V'(Z_{\mathrm{ma}}^{\delta,\epl}(t)) \dd t, \\
	I_2 &= \frac{\sqrt{2\sigma}}{T} M_{\mathrm{ma}}^\delta(X^\epl,T)^{-1} \int_0^T V'(Z_{\mathrm{ma}}^{\delta,\epl}(t)) \dd W(t),
	\end{aligned}
	\end{equation}
	and where
	\begin{equation}
	\lim_{T\to\infty} I_2 = 0,
	\end{equation}
	uniformly in $\epl$ by \cref{lem:distance_XZ} and the strong law of large numbers for martingales. Considering $I_1$, due to \cref{as:regularity} the ergodic theorem and an integration by parts yield
	\begin{equation}
	\lim_{T\to\infty} I_1 = -\alpha + \widetilde{\mathcal M}_\epl^{-1} \sigma \int_\R\int_\R V'(z) \rho^\epl(x) \partial_x \varphi^\epl(z \mid x) \dd x \dd z,
	\end{equation}
	where $\varphi^\epl(z \mid x)$ is defined in \cref{lem:Magic}, which also implies
	\begin{equation}
	\lim_{T\to\infty} I_1 = -\alpha + \mathcal A^\epl(\delta),
	\end{equation}
	where
	\begin{equation} \label{eq:A_epl_delta}
	\mathcal A^\epl(\delta) = \frac1\delta \widetilde{\mathcal M}_\epl^{-1} \E^{\widetilde \mu^\epl} \left[ (X^\epl(\delta) - Z_{\mathrm{ma}}^{\delta,\epl}(\delta)) (X^\epl(\delta) - X^\epl(0)) V''(Z_{\mathrm{ma}}^{\delta,\epl}(\delta)) \right].
	\end{equation}
	It remains to show that
	\begin{equation}
	\lim_{\epl \to 0} \mathcal A^\epl(\delta) = A,
	\end{equation}
	for which we consider two cases, corresponding to $\delta$ independent of $\epl$ and $\delta = \epl^\zeta$ with $\zeta \in (0,2)$, respectively.
	
	\textit{Case 1: $\delta$ independent of $\epl$.} In this case, the theory of homogenization yields
	\begin{equation}
	\lim_{\epl \to 0} \mathcal A^\epl(\delta) = \frac1\delta \widetilde{\mathcal M}_0^{-1} \E^{\widetilde \mu^0} \left[ (X^0(\delta) - Z_{\mathrm{ma}}^{\delta,0}(\delta)) (X^0(\delta) - X^0(0)) V''(Z_{\mathrm{ma}}^{\delta,0}(\delta)) \right],
	\end{equation}
	so that applying \cref{lem:Magic} for the homogenized equation backwards we have
	\begin{equation}
	\lim_{\epl \to 0} \mathcal A^\epl(\delta) = \widetilde{\mathcal M}_0^{-1} \Sigma \int_\R\int_\R V'(z) \rho^0(x) \partial_x \varphi^0(z \mid x) \dd x \dd z.
	\end{equation}
	An integration by parts then gives
	\begin{equation}
	\lim_{\epl \to 0} \mathcal A^\epl(\delta) = \widetilde{\mathcal M}_0^{-1} \widetilde{\mathcal M}_0 A = A,
	\end{equation}
	which concludes \textit{Case 1}.
	
	\textit{Case 2: $\delta = \epl^\zeta$ with $\zeta \in (0, 2)$.} 
	Replacing formulas \eqref{eq:decompositon_X} with $t=0$ and \eqref{eq:difference_XZ} into \eqref{eq:A_epl_delta} gives
	\begin{equation}
	\begin{aligned}
	\mathcal A^\epl(\delta) &= \frac{2\sigma}{\delta^2} \widetilde{\mathcal M}_\epl^{-1} \E^{\widetilde \mu^\epl} \left[ \left( \int_0^\delta t (1 + \Phi'(Y^\epl(t))) \dd W(t) \right) \left( \int_0^\delta (1 + \Phi'(Y^\epl(t))) \dd W(t) \right) V''(Z_{\mathrm{ma}}^{\delta,\epl}(\delta)) \right] \\
	&\quad + \widetilde R_1(\epl,\delta),
	\end{aligned}
	\end{equation}
	where, due to \cref{lem:distance_XZ}, estimate \eqref{eq:bound_mart_2} and the fact that by the Itô isometry
	\begin{equation} \label{eq:bound_mart_1}
	\E^{\widetilde \mu^\epl} \left[ \abs{\int_t^\delta (1 + \Phi'(Y^\epl(s))) \dd W(s)}^p \right]^{1/p} \le C \delta^{1/2},
	\end{equation}
	it follows that the remainder satisfies
	\begin{equation} \label{eq:bound_remainder_tilde}
	\norm{\widetilde R_1(\epl,\delta)} \le C \left( \delta^{1/2} + \epl \delta^{-1/2} + \epl^2 \delta^{-1} \right).
	\end{equation}
	Moreover, since $V''$ is Lipschitz under \cref{as:regularity} and due to the triangle inequality, equation \eqref{eq:decompositon_X}, estimates \eqref{eq:difference_XZ_remainder}, \eqref{eq:bound_mart_1} and \cref{lem:distance_XZ}, it holds for all $t \in [0, \delta]$
	\begin{equation} \label{eq:bound}
	\E^{\widetilde \mu^\epl} \left[ \norm{V''(Z_{\mathrm{ma}}^{\delta,\epl}(\delta)) - V''(X^\epl(t))}^p \right]^{1/p} \le C \left( \epl + \delta^{1/2} \right),
	\end{equation}
	which for $\epl$ and $\delta$ sufficiently small is at most of order $\mathcal O\left(\norm{\widetilde R_1(\epl,\delta)}\right)$. Hence, by the Itô isometry 
	\begin{equation}
	\begin{aligned}
	\mathcal A^\epl(\delta) &= \frac{2\sigma}{\delta^2} \widetilde{\mathcal M}_\epl^{-1} \E^{\widetilde \mu^\epl} \left[ \left( \int_0^\delta t (1 + \Phi'(Y^\epl(t))) \dd W(t) \right) \left( \int_0^\delta (1 + \Phi'(Y^\epl(t))) V''(X^\epl(t)) \dd W(t) \right) \right] \\
	&\quad + \widetilde R_2(\epl,\delta) \\
	&= \frac{2\sigma}{\delta^2} \widetilde{\mathcal M}_\epl^{-1} \int_0^\delta t \E^{\widetilde \mu^\epl} \left[ (1 + \Phi'(Y^\epl(t)))^2 V''(X^\epl(t)) \right] \dd t + \widetilde R_2(\epl,\delta),
	\end{aligned}
	\end{equation}
	where due to \eqref{eq:bound_remainder_tilde} and \eqref{eq:bound} the remainder satisfies
	\begin{equation}
	\norm{\widetilde R_2(\epl,\delta)} \le C \left( \delta^{1/2} + \epl \delta^{-1/2} + \epl^2 \delta^{-1} \right).
	\end{equation}
	Repeating the last part of the proof of \cite[Lemma 3.17]{AGP21}, we then obtain
	\begin{equation}
	\begin{aligned}
	\mathcal A^\epl(\delta) &= \frac{2\sigma \mathcal K}{\delta^2} \widetilde{\mathcal M}_\epl^{-1} \E^{\mu^0} [V''(X^0)] \int_0^\delta t \dd t + \widetilde R_2(\epl,\delta) \\
	&= \Sigma \widetilde{\mathcal M}_\epl^{-1} \E^{\mu^0} [V''(X^0)] + \widetilde R_2(\epl,\delta).
	\end{aligned}
	\end{equation}
	Finally, since $\delta = \epl^\zeta$ with $\zeta \in (0,2)$, by \eqref{eq:bound_remainder_tilde} and due to \cref{lem:distance_MeplM0} we obtain
	\begin{equation}
	\lim_{\epl \to 0} \mathcal A^\epl(\delta) = \Sigma \mathcal M_0^{-1} \E^{\mu^0} [V''(X^0)],
	\end{equation}
	and an integration by parts gives
	\begin{equation}
	\lim_{\epl \to 0} \mathcal A^\epl(\delta) = \Sigma \mathcal M_0^{-1} \frac1\Sigma \mathcal M_0 A = A, 
	\end{equation}
	which proves \textit{Case 2} and therefore concludes the proof.
\end{proof}

\begin{proof}[Proof of \cref{thm:Diffusion}] Due to \cref{as:regularity} the ergodic theorem gives
	\begin{equation} \label{eq:limtTdiffusion}
	\lim_{T\to\infty}\widehat\Sigma_{\mathrm{ma}}^\delta(X^\epl, T) = \frac1\delta \E^{\widetilde \mu^\epl}\left[\left(X^\epl(\delta) - Z_{\mathrm{ma}}^{\delta,\epl}(\delta)\right) \left(X^\epl(\delta) - X^\epl(0)\right)\right].
	\end{equation}
	Following step-by-step \textit{Case 2} of the proof of \cref{thm:Drift} with the value $1$ instead of $V''(Z_{\mathrm{ma}}^{\delta,\epl}(\delta))$, and without the pre-multiplication by $\widetilde {\mathcal M}_\epl^{-1}$, we obtain the desired result.
\end{proof}

\begin{remark} It is clear from the proof of \cref{thm:Diffusion} that it is theoretically not possible to choose $\delta$ independent of $\epl$ in the computation of $\widehat \Sigma_{\mathrm{ma}}^{\delta}(X^\epl, T)$. Let $N = 1$ and $V(x) = x^2/2$, so that $X^0$ is an Ornstein-Uhlenbeck process. In this case, the process $X^0$ is a Gaussian process such that at stationarity $X^0 \sim \mathcal {GP}(0, \mathcal C(t,s))$ where
	\begin{equation} \label{eq:covarianceOU}
	\mathcal C(t,s) = \frac{\Sigma}{A} e^{-A\abs{t-s}}.
	\end{equation}
	By \eqref{eq:limtTdiffusion} and \eqref{eq:covarianceOU} we can therefore explicitly compute
	\begin{equation}
	\lim_{\epl \to 0} \lim_{T \to \infty} \widehat \Sigma_{\mathrm{ma}}^\delta(X^\epl,T) = \frac1\delta \E^{\widetilde \mu^0}\left[\left(X^0(\delta) - Z_{\mathrm{ma}}^{\delta,0}(\delta)\right) \left(X^0(\delta) - X^0(0)\right)\right] = \frac{1 - e^{-\delta A}}{\delta A} \Sigma,
	\end{equation}
	so that $\widehat \Sigma_{\mathrm{ma}}^\delta(X^\epl, T)$ is asymptotically unbiased only if $\delta \to 0$.
\end{remark}

\begin{proof}[Proof of \cref{thm:Diffusion_tilde}] 
	Notice notice that due to \eqref{eq:failure_classic} we have
	\begin{equation}
	\frac{\langle X^\epl \rangle_T}{2T} = \sigma,
	\end{equation}
	which together with \eqref{eq:hypothesis_convergence_drift} and by \eqref{eq:failure_classic} implies
	\begin{equation}
	\lim_{\epl \to 0} \lim_{T \to \infty} \widetilde \Sigma(X^\epl,T) = \frac{\alpha^\top A}{(\alpha^\top \alpha)} \sigma.
	\end{equation}
	Finally, since $A = \mathcal K \alpha$ and $\Sigma = \mathcal K \sigma$ we obtain
	\begin{equation}
	\lim_{\epl \to 0} \lim_{T \to \infty} \widetilde \Sigma(X^\epl,T) = \mathcal K \sigma = \Sigma,
	\end{equation}
	which is the desired result.
\end{proof}

\begin{proof}[Proof of \cref{cor:Diffusion_tilde}] 
	The desired results follow directly from \cref{thm:Drift}, \cite[Theorems 3.12 and 3.18]{AGP21}, \cite[Theorem 3.5]{PaS07} and \cref{thm:Diffusion_tilde}. We remark that the limit in \cite[Theorem 3.18]{AGP21} holds true also a.s. and the proof of \cite[Theorem 3.5]{PaS07} can be modified (see \cref{rem:proofPS}) such that hypothesis \eqref{eq:hypothesis_convergence_drift} is satisfied.
\end{proof}

\begin{remark} \label{rem:proofPS}
	The proof of \cite[Theorem 3.5]{PaS07} can be modified in order to show that the estimator $\widehat A_{\mathrm{sub}}(X^\epl,T)$ given in \eqref{eq:DriftEstimatorSub} satisfies
	\begin{equation} \label{eq:limitPS}
	\lim_{\epl \to 0} \lim_{T \to \infty} \widehat A_{\mathrm{sub}}(X^\epl,T) = A, \quad \text{a.s.}
	\end{equation}
	Due to \cref{as:regularity}\ref{as:regularity_diss} and by the ergodic theorem we have
	\begin{equation}
	\lim_{T \to \infty} \widehat A_{\mathrm{sub}}(X^\epl,T) = - \frac{\E^{\mu^\epl}[V'(X^\epl(0))(X^\epl(\delta) - X^\epl(0))]}{\E^{\mu^\epl}[V'(X^\epl(0))^2]}.
	\end{equation}
	We then employ \cite[Lemma 5.2]{APZ22} with $f$ the identity function and $\Delta = \delta$ and we notice that the martingale
	\begin{equation}
	M^\epl(t) \defeq \sqrt{2\sigma} \int_0^t (1 + \Phi'(Y^\epl(s))) \dd W(s),
	\end{equation}
	where $\Phi$ is defined in \eqref{eq:Cell} and $Y^\epl(s) = X^\epl(s)$, is such that $M^\epl(0) = 0$. Therefore, we obtain
	\begin{equation}
	\lim_{T \to \infty} \widehat A_{\mathrm{sub}}(X^\epl,T) = A + \widetilde R(\epl,\delta),
	\end{equation}
	where the remainder satisfies for a constant $C > 0$ independent of $\epl$ and $\delta$
	\begin{equation}
	\abs{\widetilde R(\epl,\delta)} \le C \left( \epl \delta^{-1} + \delta^{1/2} \right).
	\end{equation}
	Finally, since $\delta = \epl^{\zeta}$ with $\zeta \in (0,1)$ we deduce the desired result \eqref{eq:limitPS}.
\end{remark}

\section{Conclusion}\label{sec:Conclusion}

In this work, we introduced a novel methodology for inferring effective diffusions from observations of multiscale dynamics based on filtering the data with moving averages. Asymptotic unbiasedness is rigorously proved by originally exploiting an ergodicity result for SDDEs. Our method is robust, easy to implement, computationally uninvolved, and outperforms the standard technique of subsampling on a range of test cases. Moreover, the performances are comparable to a similar class of estimators that we introduced in our previous work \cite{AGP21}. The accuracy of our methodology in the multiscale, multi-dimensional, and highly-parametrised case is surprisingly high in view of its simplicity and low computational involvement.

We believe that future developments could go in the direction of:
\begin{enumerate}
	\item Deriving asymptotically unbiased estimators for the diffusion coefficient which are robust in practice and do not rely on the drift estimator,
	\item Extending the filtered data methodology to multiscale SDEs with non-constant diffusion terms, or for drift functions which do not depend linearly on the parameters,
	\item Extending the analysis to the non-parametric case.
\end{enumerate} 

\subsection*{Acknowledgements} 

We thank the anonymous reviewer whose comments helped improve and clarify this manuscript. We are grateful to Assyr Abdulle and Grigorios A. Pavliotis for invaluable support and advice over the years.

\bibliographystyle{siamnodash}
\bibliography{anmc}

\end{document}